\setlist{topsep=3pt,partopsep=0pt,itemsep=1pt,parsep=0pt}
\newtheorem{Theorem}{Theorem}[section]
\newtheorem{Corollary}[Theorem]{Corollary}
\newtheorem{Remark}[Theorem]{Remark}
\newtheorem{Lemma}[Theorem]{Lemma}
\newtheorem{Claim}{Claim}
\def \leq {\leqslant}
\def \geq {\geqslant}
\let\oldproofname=\proofname
\renewcommand{\proofname}{\rm\bf{\oldproofname}}
\numberwithin{equation}{section}
\begin{document}

\title{The structure of maximal non-trivial $d$-wise intersecting uniform families with large sizes}

\author[a]{Menglong Zhang}
\author[a]{Tao Feng}
\affil[a]{School of Mathematics and Statistics, Beijing Jiaotong University, Beijing, 100044, P.R. China}
\affil[ ]{mlzhang@bjtu.edu.cn; tfeng@bjtu.edu.cn}

\date{}
\maketitle

\footnotetext{Supported by NSFC under Grants 12271023 and 11871095}


\begin{abstract}
For a positive integer $d\geq 2$, a family $\mathcal F\subseteq \binom{[n]}{k}$ is said to be $d$-wise intersecting if $|F_1\cap F_2\cap \dots\cap F_d|\geq 1$ for all $F_1, F_2, \dots ,F_d\in \mathcal F$.
A $d$-wise intersecting family $\mathcal F\subseteq \binom{[n]}{k}$ is called maximal if $\mathcal F\cup\{A\}$ is not $d$-wise intersecting for any $A\in\binom{[n]}{k}\setminus\mathcal F$. We provide a refinement of O'Neill and Verstra\"{e}te's Theorem about the structure of the largest and the second largest maximal non-trivial $d$-wise intersecting $k$-uniform families. We also determine the structure of the third largest and the fourth largest maximal non-trivial $d$-wise intersecting $k$-uniform families for any $k>d+1\geq 4$, and the fifth largest and the sixth largest maximal non-trivial $3$-wise intersecting  $k$-uniform families for any $k\geq 5$, in the asymptotic sense. Our proofs are applications of the $\Delta$-system method.
\end{abstract}

\noindent {\bf Keywords}: $d$-wise intersecting family; non-triviality; maximality; $\Delta$-system method


\section{Introduction}

For $0<a<b$, let $[a,b]$ denote the set of integers from $a$ to $b$. Let $n$ and $k$ be integers with $1\leq k\leq n$. Write $[n]=\{1,2,\ldots,n\}$.
Denote by $2^{[n]}$ and $\binom{[n]}{k}$ the power set and the family of all $k$-subsets of $[n]$, respectively.

For an integer $d\geq 2$, a family $\mathcal F\subseteq 2^{[n]}$ is said to be {\em $d$-wise intersecting} if $|F_1\cap F_2\cap \dots\cap F_d|\geq 1$ for all $F_1, F_2, \dots ,F_d\in \mathcal F$.
Every $(d+1)$-wise intersecting family is also a $d$-wise intersecting family. A $2$-wise intersecting family is often simply called an {\em intersecting family}. A $d$-wise intersecting family is called {\em trivial} if all of its members contain a common element, and {\em non-trivial} otherwise. For $1\leq k\leq n$, a family ${\cal F}\subseteq 2^{[n]}$ is said to be {\em $k$-uniform} if every member of $\cal F$ contains exactly $k$ elements, i.e., $\mathcal F\subseteq \binom{[n]}{k}$. A $d$-wise intersecting $k$-uniform family $\mathcal F$ is called {\em maximal} if $\mathcal F\cup\{A\}$ is not $d$-wise intersecting for every $A\in \binom{[n]}{k}\setminus\mathcal F$.

Two families $\mathcal F,\mathcal F'\subseteq 2^{[n]}$ are said to be {\em isomorphic}, denoted by ${\cal F}\cong{\cal F}'$, if there exists a permutation $\pi$ on $[n]$ such that $\{\{\pi(x):x\in F\}:F\in{\cal F}\}={\cal F}'$.

\subsection{Brief historical review}

The celebrated Erd\H{o}s-Ko-Rado theorem \cite{EKR} determines the size and the structure of the largest intersecting uniform families. It states that if $n\geq2k$ and $\mathcal F\subseteq \binom{[n]}{k}$ is an intersecting family, then $|\mathcal F|\leq \binom{n-1}{k-1}$; moreover, for $n>2k$, the equality holds if and only if $\mathcal F\cong\{F\in\binom{[n]}{k}:1\in F\}$ is a trivial intersecting family.


The structure of maximal non-trivial intersecting uniform families with large sizes has been investigated by many authors. Hilton and Milner \cite{HM} examined the structure of the largest non-trivial intersecting uniform families. They showed that if $n>2k\geq4$ and $\mathcal F\subseteq \binom{[n]}{k}$ is a non-trivial intersecting family with the largest size, then $|\mathcal F|=\binom{n-1}{k-1}-\binom{n-k-1}{k-1}+1$ and $\mathcal F\cong\{[2,k+1]\}\cup\{F\in\binom{[n]}{k}:1\in F,|F\cap[2,k+1]|\geq 1\}$ or $\mathcal F\cong\{F\in\binom{[n]}{3}:|F\cap[3]|\geq 2\}$. Especially, if $\mathcal F\subseteq\binom{[n]}{2}$ is a maximal intersecting family, then $\mathcal F\cong \{\{1,i\}:i\in[2,n]\}$ or $\binom{[3]}{2}$, and hence $\binom{[3]}{2}$ is the only non-trivial intersecting $2$-uniform family. Han and Kohayakawa \cite{HK} determined the structure of the second largest maximal non-trivial intersecting $k$-uniform families for any $n>2k\geq 6$, and as an open problem they asked what is the size of the third largest maximal non-trivial intersecting $k$-uniform families. Kostochka and Mubayi \cite{KM} gave an asymptotic solution to this problem as a corollary of a more general result. They characterized the structure of maximal intersecting families $\mathcal F\subseteq\binom{[n]}{k}$ with $|\mathcal F|\geq \binom{n-1}{k-1}-\binom{n-k+1}{k-1}+4\binom{n-k-2}{k-3}+\binom{n-k-3}{k-5}+2$ for any $k\geq 4$ and all large enough $n$. Kupavskii \cite{Ku} removed the sufficiently large requirement for $n$ and extended Kostochka and Mubayi's result to any $n>2k\geq 10$. Very recently, the question of Han and Kohayakawa \cite{HK} was completely settled by Huang and Peng \cite{HP} for any $n>2k\geq 8$.

Frankl \cite{F76a,F87} generalized the Erd\H{o}s-Ko-Rado theorem to $d$-wise intersecting uniform families.
He showed that if $\mathcal F\subseteq \binom{[n]}{k}$ is a $d$-wise intersecting family with $d\geq 2$ and $n\geq\frac{dk}{d-1}$, then $|\mathcal F|\leq\binom{n-1}{k-1}$; furthermore, except for the case of $d=2$ and $n=2k$, the equality holds if and only if $\mathcal F\cong\{F\in\binom{[n]}{k}:1\in F\}$ is a trivial intersecting family (cf. \cite{Kamat}).
For the case of non-trivial $d$-wise intersecting families, Hilton and Milner \cite{HM} showed that if a non-trivial $d$-wise intersecting $k$-uniform family exists, then $d\leq k$. For $d=k$, they proved that if $n\geq k+1$ and $\mathcal F\subseteq\binom{[n]}{k}$ is a maximal non-trivial $k$-wise intersecting family, then $\mathcal F\cong\binom{[k+1]}{k}$.
They conjectured that if $n$ is sufficiently large, then a non-trivial $d$-wise intersecting family $\mathcal F\subseteq\binom{[n]}{k}$ with the largest size must be one of the following two families up to isomorphism:
\begin{align*}
\mathcal A(k,d)=\left\{A\in\binom{[n]}{k}:|A\cap[d+1]|\geq d\right\}
\end{align*}
and
\begin{align*}
\mathcal B(k,d)=\left\{A\in\binom{[n]}{k}:[d-1]\subseteq A, A\cap[d,k+1]\neq\emptyset\right\}\cup\binom{[k+1]}{k}.
\end{align*}
O'Neill and Verstra\"{e}te \cite{OV} provided an asymptotic solution to this conjecture and established a stability theorem. Let
\begin{align*}
\mathcal C(k,d)= & \left\{A\in\binom{[n]}{k}:[d-1]\subseteq A,A\cap[d,k]\neq\emptyset\right\}\\
& \cup \left\{A\in\binom{[n]}{k}:|A\cap[d-1]|=d-2,[d,k]\subseteq A \right\},
\end{align*}
which is a non-trivial $d$-wise intersecting family.

\begin{Theorem}\label{thm:non_d_wise_intersecting}{\rm \cite{OV}}
Let $k>d\geq 3$ and $n>n_0(k,d)=d+e(k-d)(k^22^k)^{2^k}$. If $\mathcal F\subseteq \binom{[n]}{k}$ is a maximal non-trivial $d$-wise intersecting family, then
$$|\mathcal F|\leq\max\{|\mathcal A(k,d)|,|\mathcal B(k,d)|\}.$$
Furthermore, if $2d\geq k$ and $|\mathcal F|>\min\{|\mathcal A(k,d)|,|\mathcal B(k,d)|\}$, then ${\cal F}\cong \mathcal A(k,d)$ or $\mathcal B(k,d)$; if $2d<k$ and $|\mathcal F|>|\mathcal C(k,d)|$, then $\mathcal F\cong \mathcal B(k,d)$.
\end{Theorem}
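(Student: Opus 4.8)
The plan is to combine the transversal reformulation of maximality with the $\Delta$-system method, reducing the whole problem to a finite optimization over a bounded ``kernel''. Writing $\mathcal{I}=\{F_1\cap\cdots\cap F_{d-1}:F_1,\dots,F_{d-1}\in\F\}$ for the family of $(d-1)$-fold intersections and $\M$ for the antichain of its inclusion-minimal members, maximality of $\F$ gives the clean description $\F=\{A\in\binom{[n]}{k}:A\cap M\neq\emptyset\text{ for all }M\in\M\}$; that is, $\F$ is exactly the set of $k$-element transversals of $\M$. Since $\F$ is $d$-wise intersecting, every member of $\mathcal{I}$ is non-empty, so $\M$ is a clutter of non-empty sets. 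The $\Delta$-system method then supplies the key finiteness: for $n>n_0(k,d)$ the part of $\M$ governing the leading-order count is supported on a ground set $G$ whose size is bounded by a function of $k$ and $d$ only. Consequently $|\F|=\sum_{S}\binom{n-|G|}{k-|S|}$, where $S$ runs over the transversals of $\M$ inside $G$, and the leading term equals $t^{*}\binom{n}{k-\tau}$, with $\tau=\tau(\M)$ the transversal number and $t^{*}$ the number of minimum transversals.

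Next I would prove the covering lemma that fixes the exponent: if $\F$ is non-trivial then $\tau(\M)\geq d$. Indeed, let $T$ be a minimum transversal and suppose $|T|\leq d-1$; by non-triviality, for each $x\in T$ pick $F^{(x)}\in\F$ with $x\notin F^{(x)}$, and pad these (at most $d-1$) sets with repeats to exactly $d-1$ members. Then $M^{*}=\bigcap_{x\in T}F^{(x)}$ is a non-empty member of $\mathcal{I}$ disjoint from $T$; since $M^{*}$ contains some $M\in\M$, this contradicts $T\cap M\neq\emptyset$. Hence the leading exponent is at most $k-d$, the largest families satisfy $\tau(\M)=d$ exactly, and for them $|\F|=t^{*}\binom{n}{k-d}+O\!\left(\binom{n}{k-d-1}\right)$, where $t^{*}=|\T^{*}|$ is the number of minimum ($d$-element) transversals $\T^{*}\subseteq\binom{G}{d}$.

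The heart of the argument is to bound $t^{*}=|\T^{*}|$, and here is where I expect the real work to lie. First I would show that $\T^{*}$ is itself a $d$-wise intersecting $d$-uniform family: choosing $F_1,\dots,F_d\in\F$ whose traces on $G$ are prescribed minimum transversals $T_1,\dots,T_d$ and whose parts outside $G$ are pairwise disjoint (possible for $n$ large), $d$-wise intersection of $\F$ forces $\bigcap_i T_i=\bigcap_i F_i\neq\emptyset$. Now split on whether $\T^{*}$ is trivial. If $\T^{*}$ is non-trivial, then by the Hilton--Milner result for $d$-wise intersecting $d$-uniform families (the $d=k$ case quoted above, applied with uniformity $d$) we get $|\T^{*}|\leq d+1$, with equality only for $\T^{*}\cong\binom{[d+1]}{d}$, which rebuilds $\A(k,d)$. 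If $\T^{*}$ is trivial, then its non-triviality can no longer supply the non-triviality of $\F$, which must instead be furnished by bounded members lying inside $G$; maximality together with the $d$-wise intersecting condition against these extra members confines $\T^{*}$ to the configuration realised by $\B(k,d)$ --- a family of $k-d+2$ sets sharing a common $(d-1)$-set and living inside a $(k+1)$-element ground set --- whence $|\T^{*}|\leq k-d+2$. Combining both cases yields $t^{*}\leq\max\{d+1,k-d+2\}$ and hence $|\F|\leq\max\{|\A(k,d)|,|\B(k,d)|\}$.

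Finally I would read off the stability statement from the same count. For $2d\geq k$ the coefficients $d+1$ and $k-d+2$ differ by at most one, so any $\F$ with $|\F|$ exceeding $\min\{|\A(k,d)|,|\B(k,d)|\}$ must attain the larger coefficient, forcing $\T^{*}$ into one of the two extremal shapes and, after matching lower-order terms, $\F\cong\A(k,d)$ or $\B(k,d)$. For $2d<k$ the leading coefficients are strictly ordered, $d+1<k-d+1<k-d+2$, the middle value $k-d+1$ being realised by $\mathcal{C}(k,d)$; thus $|\F|>|\mathcal{C}(k,d)|$ forces the top coefficient $k-d+2$ and therefore $\F\cong\B(k,d)$. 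I expect the genuine difficulty to be concentrated in the trivial-$\T^{*}$ case of the previous paragraph --- excluding every other trivial shape for $\T^{*}$ (for instance families sharing only a single element rather than a $(d-1)$-set) and proving the petals cannot escape a $(k+1)$-element set --- together with the quantitative $\Delta$-system bookkeeping needed to make the error term uniform over the bounded kernel and to yield the explicit threshold $n_0(k,d)$.
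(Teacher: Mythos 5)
Your overall strategy --- dualizing maximality into ``$\F$ is the family of $k$-element transversals of the clutter $\M$ of minimal $(d-1)$-fold intersections,'' proving $\tau(\M)\geq d$, and reducing the theorem to bounding the number $t^*$ of minimum transversals --- is sound and is close in spirit to how this theorem is actually proved (the paper does not reprove it directly: it deduces it from Corollary \ref{cor:first_second_max_inter}, which rests on the full structural Theorems \ref{thm:maximal_intersecting_d+2} and \ref{thm:maximal_intersecting_d+1}; those in turn use the $\Delta$-system method on the \emph{dual} object $\mathcal B(\F)$, the kernels of large sunflowers inside $\F$, rather than on $\M$). Your transversal reformulation and the proof that $\tau(\M)\geq d$ are correct. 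But as written the argument has two genuine gaps, and they are exactly where the theorem's content lives.

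First, the claim that $\M$ (or ``the part of $\M$ governing the leading-order count'') is supported on a bounded ground set $G$, together with the exact formula $|\F|=\sum_S\binom{n-|G|}{k-|S|}$, is asserted rather than proved. A priori $\M$ can contain sets reaching far outside any bounded region, and the passage from ``every member of $\M$ contains a bounded kernel'' (which is what the sunflower lemma actually yields, via the analogue of Lemmas \ref{lem:lem5}--\ref{lem:size_B^d}) to ``the transversal count is controlled by a bounded clutter'' is precisely the quantitative bookkeeping you defer; without it neither the exact count nor the uniform error term $O(\binom{n}{k-d-1})$ is justified. Second, and more seriously, the bound $|\T^*|\leq k-d+2$ in the trivial case is the heart of the theorem and you explicitly leave it open. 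Your derivation only gives that $\T^*$ is $d$-wise intersecting, which in the trivial case yields nothing better than $t^*\leq\binom{|G|-1}{d-1}$ --- a constant far exceeding $k-d+2$. What is needed is (i) that $\T^*$ is in fact $(d-1)$-intersecting (this follows from Lemma \ref{lem:inter_size} applied to the realizing members of $\F$ with disjoint tails, giving the dichotomy of Lemma \ref{lem:(t+1)_unif_t_inter}: $\T^*\subseteq\binom{[d+1]}{d}$ or $\T^*$ is a star over a $(d-1)$-set), and (ii) in the star case, the petal bound: a member of $\F$ avoiding the star's center must, by the $(d-1)$-intersection property, contain \emph{every} petal, which caps the number of petals at $k-d+2$ (this is the $q\leq k-d+1$ argument in Case 2 of the proof of Theorem \ref{thm:maximal_intersecting_d+2}). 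Neither step appears in your write-up. A subsidiary gap: the stability claim ``after matching lower-order terms'' is doing real work, e.g.\ at $k=2d$ the families $\mathcal A(k,d)$ and $\mathcal C(k,d)$ have the \emph{same} leading coefficient $d+1$, so separating them (and upgrading $\F\subseteq\mathcal A$ or $\mathcal B$ to $\F\cong\mathcal A$ or $\mathcal B$ via maximality) requires the explicit second-order comparisons of Section \ref{sec:inequalities}. As it stands the proposal is a correct plan with the decisive lemmas missing.
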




\subsection{Our contribution}\label{sec:contributions}

This paper is devoted to examining the structure of maximal non-trivial $d$-wise intersecting $k$-uniform families with large sizes for any $d\geq 3$. For two functions $f$ and $g$, the notation $f\sim g$ denotes that $f=(1+o(1))g$.

First of all, we describe some maximal non-trivial $d$-wise intersecting families, which will be used as extremal families in our main theorems (see Theorems \ref{thm:maximal_intersecting_d+2} and \ref{thm:maximal_intersecting_d+1}).

For $k\geq d+1\geq4$ and $2\leq l\leq k-d+2$, write
\begin{equation}
\begin{aligned}
\mathcal H(k,d,l)= &\left\{F\in\binom{[n]}{k}:[d-1]\subseteq F,\, F\cap[d,d+l-1]\neq\emptyset\right\}\\
&\cup\left\{F\in\binom{[n]}{k}:|[d-1]\cap F|=d-2,\, [d,d+l-1]\subseteq F\right\}.\notag
\end{aligned}
\end{equation}
It is readily checked that $\mathcal H(k,d,l)$ is non-trivial $d$-wise intersecting and \begin{equation}\label{equ:H(k,d,l)}
\begin{aligned}
|\mathcal H(k,d,l)|=\binom{n-d+1}{k-d+1}-\binom{n-d-l+1}{k-d+1}+(d-1)\binom{n-d-l+1}{k-d-l+2}\sim c\binom{n}{k-d},
\end{aligned}
\end{equation}
where $c=l$ when $l>2$ and $c=d+1$ when $l=2$.

\begin{Remark}\label{rek:H-ABC}
$\mathcal H(k,d,2)=\mathcal A(k,d)$, $\mathcal H(k,d,k-d+2)=\mathcal B(k,d)$ and $\mathcal H(k,d,k-d+1)=\mathcal C(k,d)$.
\end{Remark}

For $k\geq d+1\geq4$, write
\begin{align*}
\mathcal G(k,d)= &\left\{F\in\binom{[n]}{k}:[d-1]\subseteq F,\, F\cap[d,k-1]\neq\emptyset\right\}\cup\{[2,k]\cup\{i\}:i\in[k+2,n]\}\\
&\cup\left\{F\in\binom{[n]}{k}:[d-1]\cup[k,k+1]\subseteq F\right\}\cup\{[2,k-1]\cup\{k+1,i\}:i\in[k+2,n]\}\\
&\cup\left\{F\in\binom{[n]}{k}:|[d-1]\cap F|=d-2,\, [d,k+1]\subseteq F\right\}.
\end{align*}
It is readily checked that $\mathcal G(k,d)$ is non-trivial $d$-wise intersecting and
\begin{align}\label{equ:G(k,d)}
|\mathcal G(k,d)|&=\binom{n-d+1}{k-d+1}-\binom{n-k+1}{k-d+1}+\binom{n-k-1}{k-d-1}+2(n-k)+d-3 \sim(k-d)\binom{n}{k-d}.
\end{align}

\begin{Remark}\label{rek:H-G}
$\mathcal G(d+1,d)$ is isomorphic to $\mathcal H(d+1,d,3)$ under the action of the permutation $\pi=(1\ d)$ on $[n]$ that interchanges the elements $1$ and $d$ and fixes all the other elements of $[n]$.
\end{Remark}

For $k\geq d+2$ and $d=3$, write
\begin{align*}
\mathcal S(k,3)= &\left\{F\in\binom{[n]}{k}:[2]\subseteq F,\, F\cap[3,k-1]\neq\emptyset\right\}
\cup\left\{F\in\binom{[n]}{k}:[2]\cup[k,k+2]\subseteq F\right\}\\
&\cup\left(\bigcup_{i=1}^{2}\left\{F\in\binom{[n]}{k}:\{i\}\cup [3,k-1]\subseteq F,\, |[k,k+2]\cap F|=2\right\}\right),\notag
\end{align*}
and
\begin{align*}
\mathcal S_1(k,3)=&\left\{F\in\binom{[n]}{k}:[2]\subseteq F,\,F\cap[3,k-1]\neq\emptyset\right\} \cup\left\{F\in\binom{[n]}{k}:[2]\cup[k,k+2]\subseteq F\right\}\\
&\cup\left\{[2,k]\cup\{i\}:i\in[k+1,n]\right\}\, \cup\, \left\{[2,k+2]\setminus\{k\},\, [k+1]\setminus\{2\},\, [k+2]\setminus\{2,k+1\}\right\}.\notag
\end{align*}
It is readily checked that $\mathcal S(k,3)$ and $\mathcal S_1(k,3)$ are both non-trivial $3$-wise intersecting,
\begin{equation}\label{equ:S(k,3)}
\begin{aligned}
|\mathcal S(k,3)|=\binom{n-2}{k-2}-\binom{n-k+1}{k-2}+\binom{n-k-2}{k-5}+6\sim(k-3)\binom{n}{k-3} \text{\ \  and }
\end{aligned}
\end{equation}
\begin{equation}\label{equ:S1(k,3)}
\begin{aligned}
|\mathcal S_1(k,3)|=\binom{n-2}{k-2}-\binom{n-k+1}{k-2}+\binom{n-k-2}{k-5}+n-k+3\sim(k-3)\binom{n}{k-3}.
\end{aligned}
\end{equation}
For $k\geq d+3$ and $d=3$, write
\begin{equation}
\begin{aligned}
\mathcal S_2(k,3)=&\left\{F\in\binom{[n]}{k}:[2]\subseteq F,F\cap[3,k-1]\neq\emptyset\right\}\cup\left\{F\in\binom{[n]}{k}:[2]\cup[k,k+3]\subseteq F\right\}\\
&\cup\left\{[2,k+1],\, [2,k-1]\cup\{k+2,k+3\},\, [2,k]\cup\{k+3\},\, [2,k+2]\setminus\{k\}\right\}\\
&\cup\left\{[3,k]\cup\{1,k+2\},\, [3,k-1]\cup\{1,k+1,k+3\}\right\},\notag
\end{aligned}
\end{equation}
and
\begin{equation}
\begin{aligned}
\mathcal S_3(k,3)=&\left\{F\in\binom{[n]}{k}:[2]\subseteq F,F\cap[3,k-1]\neq\emptyset\right\}\cup\left\{F\in\binom{[n]}{k}:[2]\cup[k,k+3]\subseteq F\right\}\\
&\cup\{[2,k]\cup\{k+2\},\, [2,k+2]\setminus\{k\}, \, [2,k-1]\cup\{k+1,k+3\}\}\\
&\cup\{[3,k+1]\cup\{1\},\, [3,k-1]\cup\{1,k+1,k+2\}, \, [3,k-1]\cup\{1,k+2,k+3\}\}.\notag
\end{aligned}
\end{equation}
It is readily checked that $\mathcal S_2(k,3)$ and $\mathcal S_3(k,3)$ are both non-trivial $3$-wise intersecting and
\begin{align}\label{equ:S2(k,3)_S3(k,3)}
|\mathcal S_2(k,3)|=|\mathcal S_3(k,3)|&=\binom{n-2}{k-2}-\binom{n-k+1}{k-2}+\binom{n-k-3}{k-6}+6\sim(k-3)\binom{n}{k-3}.
\end{align}

Our main theorems are as follows:

\begin{Theorem}\label{thm:maximal_intersecting_d+2}
Let $k\geq d+2\geq 5$ and $n>n_1(k,d)=d+2(k-d)^2(k^{k-d}-1)^kk!$.
If $\mathcal F\subseteq \binom{[n]}{k}$ is a maximal non-trivial $d$-wise intersecting family and $|\mathcal F|>(k-d-\frac{1}{2})\binom{n-d}{k-d}$, then the following hold.
\begin{enumerate}
\item[$(1)$] If $k=5$ and $d=3$, then $\mathcal F\cong\mathcal H(k,d,2)$, $\mathcal H(k,d,k-d)$, $\mathcal H(k,d,k-d+1)$, $\mathcal H(k,d,k-d+2)$, $\mathcal G(k,d)$, $\mathcal S(k,3)$ or $\mathcal S_1(k,3)$.

\item[$(2)$]  If $k\in\{6,7\}$ and $d=3$, then $\mathcal F\cong\mathcal H(k,d,2)$, $\mathcal H(k,d,k-d)$, $\mathcal H(k,d,k-d+1)$, $\mathcal H(k,d,k-d+2)$, $\mathcal G(k,d)$, $\mathcal S(k,3)$, $\mathcal S_1(k,3)$, $\mathcal S_2(k,3)$ or $\mathcal S_3(k,3)$.

\item[$(3)$]  If $k>7$ and $d=3$, then $\mathcal F\cong\mathcal H(k,d,k-d)$, $\mathcal H(k,d,k-d+1)$, $\mathcal H(k,d,k-d+2)$, $\mathcal G(k,d)$, $\mathcal S(k,3)$, $\mathcal S_1(k,3)$, $\mathcal S_2(k,3)$ or $\mathcal S_3(k,3)$.

\item[$(4)$]  If $d+2\leq k\leq 2d+1$ and $d>3$, then $\mathcal F\cong\mathcal H(k,d,2)$, $\mathcal H(k,d,k-d)$, $\mathcal H(k,d,k-d+1)$, $\mathcal H(k,d,k-d+2)$ or $\mathcal G(k,d)$.

\item[$(5)$]  If $k>2d+1$ and $d>3$, then $\mathcal F\cong\mathcal H(k,d,k-d)$, $\mathcal H(k,d,k-d+1)$, $\mathcal H(k,d,k-d+2)$ or $\mathcal G(k,d)$.
\end{enumerate}
\end{Theorem}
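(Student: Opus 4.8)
The plan is to run the $\Delta$-system (kernel) method to reduce the classification of $\mathcal{F}$ to a purely finite problem about a bounded ``kernel'' living on $O_{k,d}(1)$ points, and then to resolve that finite problem by a structural case analysis. Concretely, since $n>n_1(k,d)$ is far larger than the sunflower-lemma thresholds needed below, I would repeatedly extract sunflowers with $d$ (or more) petals from $\mathcal{F}$: because a sunflower with $d$ petals and empty core would violate the $d$-wise intersecting property, every such sunflower has a nonempty core, and collecting the cores yields a bounded family $\mathcal{J}$ of \emph{generators} — minimal sets all of whose $k$-supersets lie in $\mathcal{F}$ — on a ground set whose size depends only on $k,d$. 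The kernel $\mathcal{J}$ inherits from $\mathcal{F}$ the properties of being $d$-wise intersecting and non-trivial, it reflects the maximality of $\mathcal{F}$, and it controls the size through an inclusion--exclusion estimate whose leading term is $c\binom{n}{k-d}$, where $c$ is the number of minimal generators of size exactly $d$. All but a lower-order collection of members of $\mathcal{F}$ contain a member of $\mathcal{J}$; the remaining ``exceptional'' $k$-sets form a collection of size $o(\binom{n}{k-d})$.

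Next I would use the size hypothesis to pin down the coarse shape of $\mathcal{J}$. Since $c$ is an integer and $|\mathcal{F}|>(k-d-\tfrac{1}{2})\binom{n-d}{k-d}$ forces $c>k-d-\tfrac{1}{2}$, we get $c\geq k-d$, i.e.\ $\mathcal{J}$ has at least $k-d$ generators of size $d$; a degree/shadow argument in the spirit of the proof of Theorem \ref{thm:non_d_wise_intersecting} then shows these heavy generators must share a common core $C$ with $|C|=d-1$, so that up to isomorphism the bulk of $\mathcal{F}$ consists of the $k$-sets containing $C$ and meeting a fixed set $P\supseteq[d,k-1]$ of at least $k-d$ further points. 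This already isolates $\mathcal{H}(k,d,k-d)$, $\mathcal{H}(k,d,k-d+1)$ and $\mathcal{H}(k,d,k-d+2)$ as the possibilities when $\mathcal{J}$ carries no generator meeting $C$ in fewer than $d-1$ points, and it explains the regime $k\leq 2d+1$ for $\mathcal{H}(k,d,2)=\mathcal{A}(k,d)$ to survive, since $|\mathcal{A}(k,d)|\sim(d+1)\binom{n}{k-d}$ clears the bound precisely when $d+1>k-d-\tfrac{1}{2}$.

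The substance of the argument is then the classification of the remaining, lower-order data: the generators meeting $C$ in exactly $d-2$ points, together with the finitely many exceptional $k$-sets not generated by $C$. Here maximality is the main lever — every $k$-set that can be added while preserving the $d$-wise intersecting property must already be present — and I would translate it into adjacency constraints among the petals of $P$ and the exceptional sets, and solve the resulting finite system. This is where the cases split: for $d>3$ the condition $|C\cap F|=d-2$ is rigid and only $\mathcal{G}(k,d)$ appears beside the $\mathcal{H}$ families, whereas for $d=3$ one has $|C\cap F|=1$, admitting the extra degrees of freedom realized by $\mathcal{S}(k,3),\mathcal{S}_1(k,3),\mathcal{S}_2(k,3),\mathcal{S}_3(k,3)$; the sub-splits $k=5$, $k\in\{6,7\}$ and $k>7$ then record exactly when $\mathcal{S}_2,\mathcal{S}_3$ exist (they require $k\geq d+3=6$) and when $\mathcal{A}(k,d)$ clears the size bound ($k\leq 2d+1=7$).

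I expect the main obstacle to be this last step: showing the list is exhaustive and that each admissible kernel yields exactly one named family up to isomorphism. The difficulty is twofold. First, the competing families $\mathcal{H}(k,d,k-d)$, $\mathcal{G}(k,d)$ and the $\mathcal{S}_i(k,3)$ are all asymptotically tied at $(k-d)\binom{n}{k-d}$, so distinguishing them requires carrying the inclusion--exclusion estimate down to the lower-order terms $\binom{n-k-1}{k-d-1}$, $\binom{n-k-2}{k-5}$ and $\binom{n-k-3}{k-6}$ and checking these against the exact sizes recorded in \eqref{equ:G(k,d)}, \eqref{equ:S(k,3)}, \eqref{equ:S1(k,3)} and \eqref{equ:S2(k,3)_S3(k,3)}. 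Second, verifying that maximality forces precisely these configurations — and rules out all others — is a delicate finite enumeration that must be organized carefully to stay tractable, which is presumably why the statement is split over the five regimes of $(k,d)$.
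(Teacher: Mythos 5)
Your overall strategy is the paper's strategy: extract kernels of large sunflowers, show the minimal ones form a $(d-1)$-intersecting family, use the size hypothesis to force at least $k-d$ kernels of size exactly $d$, and then classify. But there are two genuine problems. First, your claim that ``these heavy generators must share a common core $C$ with $|C|=d-1$'' is false, and it is false precisely in the case that produces $\mathcal H(k,d,2)=\mathcal A(k,d)$. A $(d-1)$-intersecting family of $d$-sets is, up to isomorphism, a subfamily of \emph{either} the star $\{[d-1]\cup\{i\}:i\geq d\}$ \emph{or} of $\binom{[d+1]}{d}$ (Lemma \ref{lem:(t+1)_unif_t_inter}); in the second branch, once there are at least three generators, there is no common $(d-1)$-core, and the paper shows in that branch that every $F\in\mathcal F$ satisfies $|F\cap[d+1]|\geq d$, i.e.\ $\mathcal F\subseteq\mathcal A(k,d)$. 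Your proposal acknowledges $\mathcal A(k,d)$ only as an afterthought about when its size clears the threshold $(k-d-\tfrac12)\binom{n-d}{k-d}$, but your structural assertion would exclude it from ever arising; the dichotomy must be built into the argument, not patched in at the end. (Relatedly, you assert $|P|\geq k-d$ but never derive the necessary upper bound $|P|\leq k-d+2$, which in the paper comes from non-triviality and is what caps the list at $\mathcal H(k,d,k-d),\dots,\mathcal H(k,d,k-d+2)$.)

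Second, and more substantially, the part you defer as ``the main obstacle'' — showing the list is exhaustive when the star has exactly $k-d$ generators of size $d$ — is not a verification step but the bulk of the proof. In the paper this is Case 2.3: one builds an auxiliary graph $G$ on $[k,n]$ from the sets meeting $[d-1]$ in $d-2$ points, and the classification into $\mathcal H(k,d,k-d+1)$, $\mathcal G(k,d)$, $\mathcal S(k,3)$, $\mathcal S_1(k,3)$, $\mathcal S_2(k,3)$, $\mathcal S_3(k,3)$ proceeds by a case analysis on $\tau(G)$ and $\nu(G)$ (cover number $1$; matching number $\geq 2$; triangle), with the $d$-wise intersection condition applied to carefully chosen $d$-tuples to pin down which edge configurations survive. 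Nothing in your sketch identifies this combinatorial mechanism or explains why exactly these graphs (a star, a matching of size two plus its ``crossing'' edges, a triangle) occur and no others; ``solve the resulting finite system'' is not an argument, since the exceptional sets live on an unbounded vertex set $[k,n]$ and ruling out other configurations requires the specific intersection computations, not a finite check. As written, the proposal is a correct road map up to the error above, but it does not constitute a proof of the theorem.
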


\begin{Theorem}\label{thm:maximal_intersecting_d+1}
Let $d\geq 3$ and $n>(d+1)^2$. If $\mathcal F\subseteq \binom{[n]}{d+1}$ is a maximal non-trivial $d$-wise intersecting family and $|\mathcal F|>3d(d+1)$, then $\mathcal F$ is isomorphic to one of $\mathcal H(d+1,d,2)$ and $\mathcal H(d+1,d,3)$.
\end{Theorem}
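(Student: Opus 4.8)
The plan is to translate maximality into a covering description of $\mathcal F$, to reduce via the $\Delta$-system method to a family living on a bounded kernel, and to finish with a finite classification in which the hypothesis $|\mathcal F|>3d(d+1)$ isolates exactly the two claimed families.

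First I would observe that $\mathcal F$ is $(d-1)$-wise intersecting: since the $d$-wise intersecting condition permits repeated members, every $(d-1)$-fold intersection $F_1\cap\cdots\cap F_{d-1}$ equals the $d$-fold intersection $F_1\cap\cdots\cap F_{d-1}\cap F_{d-1}$ and is hence nonempty. Let $\mathcal T$ be the collection of minimal sets of the form $F_1\cap\cdots\cap F_{d-1}$ with the $F_i\in\mathcal F$. The $d$-wise intersecting property says precisely that every member of $\mathcal F$ meets every $T\in\mathcal T$, and maximality gives the converse, so
\[
\mathcal F=\Bigl\{G\in\binom{[n]}{d+1}:G\cap T\neq\emptyset\ \text{for every}\ T\in\mathcal T\Bigr\}.
\]
Non-triviality forces $|T|\geq 2$ for every $T\in\mathcal T$ (a singleton $T=\{x\}$ would place $x$ in all members), while intersecting two distinct members shows $\min_{T\in\mathcal T}|T|\leq d$. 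Thus $\mathcal F$ is exactly the family of $(d+1)$-element transversals of an antichain $\mathcal T$ whose sets have sizes in $[2,d]$.

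Next I would invoke the $\Delta$-system method to confine $\mathcal T$ to a bounded kernel $Y\subseteq[n]$ whose size is bounded in terms of $d$ (with $|Y|\leq d+2$ for the extremal families). The guiding principle is that an element lying in no large sunflower core lies in too few members to survive in a minimal $(d-1)$-fold intersection, so it never occurs in $\mathcal T$; this is exactly the collapse of the support of $\mathcal T$ onto $[d+2]$ that one sees for $\mathcal B(d+1,d)=\mathcal H(d+1,d,3)$. The target structure is that $\mathcal F$ is a kernel family, $\mathcal F=\{G\in\binom{[n]}{d+1}:G\cap Y\in\mathcal G\}$ for some $\mathcal G\subseteq 2^{Y}$, with the elements of $[n]\setminus Y$ entering interchangeably. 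A short computation with disjoint free parts then gives the decisive reduction: $\mathcal F$ is $d$-wise intersecting if and only if $\mathcal G$ is a $d$-wise intersecting family of subsets of $Y$, and $\mathcal F$ is non-trivial respectively maximal if and only if $\mathcal G$ is. Here the bound $n>(d+1)^2$ provides enough room to place members in generic position when extracting the kernel, and $|\mathcal F|>3d(d+1)$ guarantees that enough members are present for the relevant elements to be popular.

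Finally I would carry out the finite classification on $Y$. Writing $y=|Y|$, the exact-trace decomposition gives $|\mathcal F|=\sum_{S\in\mathcal G}\binom{n-y}{d+1-|S|}$. If every $S\in\mathcal G$ had size $d+1$ then $\mathcal F\subseteq\binom{Y}{d+1}$ would be a $d$-wise intersecting family on a bounded ground set and hence of bounded size; the role of the hypothesis $|\mathcal F|>3d(d+1)$ is exactly to exceed all such sporadic families and thereby force a trace of size $d$ (contributing the dominant $\sim n$ term), while one shows that the $d$-wise intersecting and non-triviality constraints on $\mathcal G$ rule out any trace of size $\leq d-1$. It then remains to classify the maximal non-trivial $d$-wise intersecting families $\mathcal G$ on a bounded ground set whose minimum set size is $d$: up to isomorphism exactly two survive, namely $\mathcal G=\{S\subseteq[d+1]:|S|\geq d\}$, giving $\mathcal F\cong\mathcal A(d+1,d)=\mathcal H(d+1,d,2)$, and its counterpart on $[d+2]$, giving $\mathcal F\cong\mathcal B(d+1,d)=\mathcal H(d+1,d,3)$; the maximality of both is verified in Appendix \ref{Sec:A}. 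I expect the main obstacle to be this classification together with the kernel bound: with only $n>(d+1)^2$ available—rather than the tower-type threshold of Theorem \ref{thm:non_d_wise_intersecting}—one must bound $|Y|$ and then eliminate every competing configuration, showing that each either fails to be $d$-wise intersecting (as $\{G:|G\cap[d+2]|\geq d\}$ does), drops its minimum trace size below $d$, or has size at most $3d(d+1)$, so that no maximal non-trivial family has size strictly between $3d(d+1)$ and $|\mathcal B(d+1,d)|$.
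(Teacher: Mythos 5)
There is a genuine gap. Your opening reduction is fine: the transversal description $\mathcal F=\{G\in\binom{[n]}{d+1}:G\cap T\neq\emptyset\ \forall T\in\mathcal T\}$ does follow from maximality, and non-triviality does force $|T|\geq 2$. But everything after that is a plan rather than an argument, and the proposed mechanism cannot work under the stated hypotheses. The $\Delta$-system method, which you invoke to ``confine $\mathcal T$ to a bounded kernel $Y$,'' requires a family of size at least $k!(s-1)^k$ to extract a sunflower of size $s$ (Lemma \ref{lem:Sunflower}); with $k=d+1$ this is $(d+1)!$ already for $s=2$, which dwarfs the available lower bound $|\mathcal F|>3d(d+1)$, and the kernel machinery of Section \ref{sec:2} needs $n$ of tower type, not $n>(d+1)^2$. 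This is precisely why the paper treats $k=d+1$ by a separate, elementary argument. You also never isolate the quantitative consequence of non-triviality that drives that argument, namely Lemma \ref{lem:inter_size}: any two members share at least $d-1$ elements, so in the $(d+1)$-uniform case every member meets a fixed member in all but at most two points. Your weaker observation that $\mathcal F$ is $(d-1)$-wise intersecting is immediate from the definition and does not substitute for this.

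Concretely, the paper's proof fixes a $(d-1)$-set $[d-1]$ of maximum degree, uses Lemma \ref{lem:inter_size} to show every $F\in\mathcal F$ satisfies $|F\cap[d-1]|\geq d-3$, disposes of the case $|F\cap[d-1]|=d-3$ by a degree-counting argument that caps $|\mathcal F|$ at $3d(d+1)$ (this is where the size hypothesis actually enters), and then classifies the remaining structure by analyzing the $3$-uniform hypergraph of traces $F\setminus[d-1]$ for $F$ with $|F\cap[d-1]|=d-2$, splitting on its cover number $\tau(H)\in\{1,2\}$ and matching number. Your proposal contains neither the kernel bound nor the finite classification; both are asserted (``one shows that \dots,'' ``exactly two survive'') but these are the entire content of the theorem. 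As written, the argument does not go through.
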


The proof of Theorem \ref{thm:maximal_intersecting_d+2} is presented in Section \ref{sec:2} by using the $\Delta$-system method. The proof of Theorem \ref{thm:maximal_intersecting_d+1} appears in Section \ref{sec:3}. Section \ref{sec:inequalities} compares the sizes of extremal families given in Theorems \ref{thm:maximal_intersecting_d+2} and \ref{thm:maximal_intersecting_d+1}. As a corollary, we provide a refinement of Theorem \ref{thm:non_d_wise_intersecting}.

\begin{Corollary}\label{cor:first_second_max_inter}
Let $k>d\geq 3$. For any $n>n_2(k,d)$, where
\[ n_2(k,d)=
\begin{cases}
(d+1)^2,& \text{if } k=d+1;\\
d+2(k-d)^2(k^{k-d}-1)^kk!,& \text{if } k\geq d+2,
\end{cases} \]
\begin{enumerate}
\item[$(1)$] if $\mathcal F\subseteq \binom{[n]}{k}$ is the largest non-trivial $d$-wise intersecting family, then $\mathcal F\cong \mathcal H(k,d,2)$ when $k\leq 2d-1$ and $\mathcal F\cong \mathcal H(k,d,k-d+2)$ when $k\geq2d$;
\item[$(2)$] if $\mathcal F\subseteq \binom{[n]}{k}$ is the second largest maximal non-trivial $d$-wise intersecting family, then $\mathcal F\cong \mathcal H(k,d,k-d+2)$ when $k\leq2d-1$, $\mathcal F\cong \mathcal H(k,d,2)$ when $k=2d$ and $\mathcal F\cong \mathcal H(k,d,k-d+1)$ when $k\geq2d+1$.
\end{enumerate}
\end{Corollary}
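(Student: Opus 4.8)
The plan is to deduce the corollary from the two structure theorems together with the size comparisons carried out in Section~\ref{sec:inequalities}. By Theorems~\ref{thm:maximal_intersecting_d+2} and~\ref{thm:maximal_intersecting_d+1}, for $n>n_2(k,d)$ every sufficiently large maximal non-trivial $d$-wise intersecting family is isomorphic to one of the explicit families $\mathcal H(k,d,l)$, $\mathcal G(k,d)$, or (when $d=3$) one of $\mathcal S(k,3),\dots,\mathcal S_3(k,3)$. It therefore suffices to rank these finitely many families by size and read off the two largest.

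First I would check that the two largest maximal families actually fall under the hypotheses of the relevant theorem. For $k=d+1$ this is handled by Theorem~\ref{thm:maximal_intersecting_d+1} directly, since both $\mathcal H(d+1,d,2)$ and $\mathcal H(d+1,d,3)$ exceed the bound $3d(d+1)$ once $n>(d+1)^2$. For $k\geq d+2$, the families $\mathcal H(k,d,k-d+1)$ and $\mathcal H(k,d,k-d+2)$ have leading coefficients $k-d+1$ and $k-d+2$ by \eqref{equ:H(k,d,l)}, both exceeding $k-d-\frac12$, so for $n>n_2(k,d)$ they lie above the threshold $(k-d-\frac12)\binom{n-d}{k-d}$ of Theorem~\ref{thm:maximal_intersecting_d+2}; being themselves maximal non-trivial families, they witness at least two maximal families above the threshold. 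Hence the largest and the second largest maximal non-trivial families both have size above the threshold and so appear in the explicit list.

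The next step is the comparison itself. From \eqref{equ:H(k,d,l)}, \eqref{equ:G(k,d)} and \eqref{equ:S(k,3)}--\eqref{equ:S2(k,3)_S3(k,3)}, every candidate other than $\mathcal H(k,d,2)$, $\mathcal H(k,d,k-d+1)$ and $\mathcal H(k,d,k-d+2)$ has leading coefficient at most $k-d$, hence is eventually dominated by both $\mathcal H(k,d,k-d+1)$ and $\mathcal H(k,d,k-d+2)$; so only these three can be largest or second largest. For $k\geq d+2$ their leading coefficients are $d+1$, $k-d+1$ and $k-d+2$, and ranking them splits into the cases $k\leq 2d-2$, $k=2d-1$, $k=2d$ and $k\geq 2d+1$. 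In the two extreme ranges the ordering is forced by the leading coefficients alone: for $k\leq 2d-2$ one has $d+1>k-d+2>k-d+1$, giving largest $\mathcal H(k,d,2)$ and second $\mathcal H(k,d,k-d+2)$; for $k\geq 2d+1$ one has $k-d+2>k-d+1>d+1$, giving largest $\mathcal H(k,d,k-d+2)$ and second $\mathcal H(k,d,k-d+1)$. The case $k=d+1$ is separate, since there only $\mathcal H(d+1,d,2)$ and $\mathcal H(d+1,d,3)$ arise, with leading coefficients $d+1>3$; as $d+1\leq 2d-2$ for $d\geq 3$, this is consistent with the $k\leq 2d-1$ verdict, namely largest $\mathcal H(k,d,2)$ and second $\mathcal H(k,d,k-d+2)=\mathcal H(d+1,d,3)$.

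The main obstacle is the two boundary cases $k=2d-1$ and $k=2d$, where two of the three leading coefficients collide and the ordering cannot be settled asymptotically. At $k=2d-1$ the families $\mathcal H(k,d,2)$ and $\mathcal H(k,d,k-d+2)$ both have leading coefficient $d+1$, while at $k=2d$ the families $\mathcal H(k,d,2)$ and $\mathcal H(k,d,k-d+1)$ both have leading coefficient $d+1$. Here I would return to the exact formula \eqref{equ:H(k,d,l)} and expand the differences $|\mathcal H(k,d,2)|-|\mathcal H(k,d,k-d+2)|$ and $|\mathcal H(k,d,2)|-|\mathcal H(k,d,k-d+1)|$ as polynomials in $n$. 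A short computation shows that the coefficient of $n^{k-d}$ in these differences is a positive multiple of $2d-1-k$ and of $2d-k$ respectively, each of which vanishes precisely at the corresponding boundary; one must therefore compute the next coefficient and verify that it is positive. This yields $|\mathcal H(k,d,2)|>|\mathcal H(k,d,k-d+2)|$ when $k=2d-1$ and $|\mathcal H(k,d,2)|>|\mathcal H(k,d,k-d+1)|$ when $k=2d$, so that the largest is $\mathcal H(k,d,2)$ and the second largest $\mathcal H(k,d,k-d+2)$ at $k=2d-1$, and the largest is $\mathcal H(k,d,k-d+2)$ and the second largest $\mathcal H(k,d,2)$ at $k=2d$. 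Combined with the two extreme ranges this produces the claimed rankings, and I expect the bookkeeping in these two next-order expansions to be the only delicate part of the argument.
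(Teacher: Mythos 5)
Your proposal is correct and follows essentially the same route as the paper: invoke Theorems \ref{thm:maximal_intersecting_d+1} and \ref{thm:maximal_intersecting_d+2} to reduce to the explicit list of extremal families, then rank $\mathcal H(k,d,2)$, $\mathcal H(k,d,k-d+1)$ and $\mathcal H(k,d,k-d+2)$ by size, and your verdicts in the boundary cases $k=2d-1$ and $k=2d$ agree with what the paper obtains from Lemma \ref{lem:size_A(k,d)}(1). The only caveat is that your ``eventually dominated'' steps are asymptotic, whereas the corollary is claimed for every $n>n_2(k,d)$; these comparisons must be made quantitative (as the paper does in Lemmas \ref{lem:size_G(k,d)}--\ref{lem:size_A(k,d)}, with explicit thresholds well below $n_2(k,d)$), exactly as you already propose to do for the two tied cases.
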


Let $\mathcal F\subseteq\binom{[n]}{k}$ be a maximal non-trivial $d$-wise intersecting family. By Corollary \ref{cor:first_second_max_inter}(1) and Remark \ref{rek:H-ABC}, if $n>n_2(k,d)$, then $|\mathcal F|\leq\max\{|\mathcal H(k,d,2)|,|\mathcal H(k,d,k-d+2)|\}=\max\{|\mathcal A(k,d)|$, $|\mathcal B(k,d)|\}$. Furthermore, Corollary \ref{cor:first_second_max_inter}  and Remark \ref{rek:H-ABC} imply that if $k\leq 2d-1$ and $|\mathcal F|>|\mathcal B(k,d)|$, then $\mathcal F\cong\mathcal A(k,d)$; if $k=2d$ and $|\mathcal F|>|\mathcal A(k,d)|$, then $\mathcal F\cong\mathcal B(k,d)$; if $k\geq 2d+1$ and $|\mathcal F|>|\mathcal C(k,d)|$, then $\mathcal F\cong\mathcal B(k,d)$. Since $n_0(k,d)>n_2(k,d)$, we obtain Theorem \ref{thm:non_d_wise_intersecting} from Corollary \ref{cor:first_second_max_inter}.

As corollaries of Theorems \ref{thm:maximal_intersecting_d+2} and \ref{thm:maximal_intersecting_d+1}, in Section \ref{sec:inequalities}, we determine the structure of the third largest and the fourth largest maximal non-trivial $d$-wise intersecting $k$-uniform families for any $k>d+1\geq 4$, and the fifth largest and the sixth largest maximal non-trivial $3$-wise intersecting  $k$-uniform families for any $k\geq 5$, in the asymptotic sense.

\begin{Corollary}\label{cor:third_max_inter}
Let $k>d+1\geq 4$. For any $n>n_1(k,d)=d+2(k-d)^2(k^{k-d}-1)^kk!$, if $\mathcal F\subseteq \binom{[n]}{k}$ is the third largest maximal non-trivial $d$-wise intersecting family, then the following hold.
\begin{enumerate}
\item[$(1)$] If $k\leq 2d$, then $\mathcal F\cong\mathcal H(k,d,k-d+1)$.
\item[$(2)$] If $k=2d+1$, then $\mathcal F\cong\mathcal H(2d+1,d,2)$.
\item[$(3)$] If $k>2d+1$, then $\mathcal F\cong\mathcal G(k,d)$.
\end{enumerate}
\end{Corollary}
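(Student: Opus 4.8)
My plan is to deduce the statement entirely from Theorem~\ref{thm:maximal_intersecting_d+2}, which for $n>n_1(k,d)$ already enumerates, up to isomorphism, every maximal non-trivial $d$-wise intersecting family of size exceeding $(k-d-\tfrac{1}{2})\binom{n-d}{k-d}$. By \eqref{equ:H(k,d,l)}, \eqref{equ:G(k,d)} and \eqref{equ:S(k,3)}--\eqref{equ:S2(k,3)_S3(k,3)} each listed family has size $(1+o(1))c\binom{n}{k-d}$ with $c\ge k-d$, so for large $n$ all of them exceed that threshold; in particular the three maximal families of largest size do, hence they appear on the list. The problem therefore reduces to ordering the finitely many explicit families $\mathcal H(k,d,l)$, $\mathcal G(k,d)$ and (when $d=3$) $\mathcal S(k,3),\mathcal S_1(k,3),\mathcal S_2(k,3),\mathcal S_3(k,3)$ by size and reading off the third entry. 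Since Corollary~\ref{cor:first_second_max_inter} already pins down the two largest, I only have to locate the family of third-largest size.

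I would first sort the candidates by the leading coefficient $c$ of $\binom{n}{k-d}$ recorded in the size formulas: $c=l$ for $\mathcal H(k,d,l)$ with $l\ge3$, $c=d+1$ for $\mathcal H(k,d,2)$, and $c=k-d$ for $\mathcal G(k,d)$ and for each $\mathcal S_i(k,3)$. Using Remark~\ref{rek:H-ABC} this puts $\mathcal B=\mathcal H(k,d,k-d+2)$ (coefficient $k-d+2$) strictly on top, $\mathcal C=\mathcal H(k,d,k-d+1)$ (coefficient $k-d+1$) second, a block of families with coefficient $k-d$ next, while $\mathcal A=\mathcal H(k,d,2)$ sits at coefficient $d+1$. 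The position of $\mathcal A$ relative to $\mathcal C$ and to the $(k-d)$-block is controlled by comparing $d+1$ with $k-d+1$ and with $k-d$, and this is exactly the trichotomy $k\le 2d$, $k=2d+1$, $k>2d+1$ of the statement.

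I would then settle the three cases. For $k\le 2d$ the leading coefficients already force $\mathcal C$ into third place behind $\mathcal B$ and $\mathcal A$; the only tie occurs at $k=2d$, where $\mathcal A$ and $\mathcal C$ share coefficient $d+1$, and one must confirm $|\mathcal A(k,d)|>|\mathcal C(k,d)|$ to keep $\mathcal C$ third. For $k=2d+1$ the families $\mathcal A$, $\mathcal H(k,d,k-d)$, $\mathcal G$ (together with the $\mathcal S_i$ when $d=3$, i.e.\ $k=7$) all have coefficient $d+1=k-d$, and the task is to show $\mathcal A$ is the largest of this block, so that $\mathcal A$ lands in third place behind $\mathcal B$ and $\mathcal C$. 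For $k>2d+1$ the family $\mathcal A$ has dropped off the list and the task is to show $\mathcal G$ is the largest of the $(k-d)$-block $\{\mathcal H(k,d,k-d),\mathcal G,\mathcal S_i\}$, putting $\mathcal G$ in third place. In every case $\mathcal B$ and $\mathcal C$ occupy the first two places by their strictly larger leading coefficients, in agreement with Corollary~\ref{cor:first_second_max_inter}.

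The main obstacle is the collection of subleading size comparisons inside the $(k-d)$-block, where the $\binom{n}{k-d}$-terms agree and the decision is made one order lower. For $\mathcal G$ versus $\mathcal H(k,d,k-d)$ the decisive quantity is $\binom{n-k-1}{k-d-1}-(d-1)\binom{n-k+1}{2}$, which is positive exactly because $k-d-1>2$ throughout $k>2d+1$ (as $d\ge3$ gives $k-d\ge d+2\ge5$); for $\mathcal G$ versus the exceptional $d=3$ families the comparison reduces to the degrees $k-4,\,k-5,\,k-6$ of the correction terms in \eqref{equ:G(k,d)} and \eqref{equ:S(k,3)}--\eqref{equ:S2(k,3)_S3(k,3)}, where $\mathcal G$'s term $\binom{n-k-1}{k-4}$ dominates. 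The comparisons $|\mathcal A|>|\mathcal C|$ at $k=2d$ and $|\mathcal A|>|\mathcal G|$, $|\mathcal A|>|\mathcal H(k,d,k-d)|$ at $k=2d+1$ are the most delicate, since there even the $\binom{n}{k-d}$-terms cancel and one is reduced to checking the sign of the coefficient of $\binom{n}{k-d-1}$, which works out to a positive multiple of $\tfrac{d(d-1)}{2}$ (diminished by $1$ in the $\mathcal G$ case) and is positive precisely because $d\ge3$. These estimates are elementary binomial expansions but must be organized carefully; they are the inequalities assembled in Section~\ref{sec:inequalities}, and once they are in place the three cases close at once.
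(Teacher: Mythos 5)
Your proposal is correct and follows essentially the same route as the paper: restrict to the candidate list supplied by Theorem \ref{thm:maximal_intersecting_d+2}, order the candidates first by the leading coefficient $c$ of $\binom{n}{k-d}$ and then by the subleading binomial terms, and read off the third entry; the decisive comparisons you isolate ($|\mathcal A|>|\mathcal C|$ at $k=2d$; $|\mathcal A|>|\mathcal G|$ and $|\mathcal A|>|\mathcal H(k,d,k-d)|$ at $k=2d+1$; $|\mathcal G|>|\mathcal H(k,d,k-d)|$ and $|\mathcal G|>|\mathcal S_i|$ for $k>2d+1$) are exactly the inequalities the paper establishes in Lemmas \ref{lem:size_G(k,d)}--\ref{lem:size_A(k,d)}, with the signs of the subleading coefficients worked out as you describe. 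The only point to tighten is that the corollary is asserted for every $n>n_1(k,d)$ rather than merely for $n$ sufficiently large, so your ``for large $n$'' steps must be backed by the explicit thresholds recorded in Section \ref{sec:inequalities} and Appendix \ref{Sec:B}, all of which lie below $n_1(k,d)$.
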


\begin{Corollary}\label{cor:fourth_max_inter}
Let $k>d+1\geq 4$. For any $n>n_1(k,d)=d+2(k-d)^2(k^{k-d}-1)^kk!$, if $\mathcal F\subseteq \binom{[n]}{k}$ is the fourth largest maximal non-trivial $d$-wise intersecting family, then the following hold.
\begin{enumerate}
\item[$(1)$] If $k=d+2$, then $\mathcal F\cong\mathcal G(d+2,d)$.
\item[$(2)$] If $k=d+3$, then $\mathcal F\cong\mathcal H(d+3,d,3)$.
\item[$(3)$] If $d+4\leq k\leq 2d+1$, then $\mathcal F\cong\mathcal G(k,d)$.
\item[$(4)$] If $k>2d+1$ and $d=3$, then $\mathcal F\cong\mathcal S_1(k,3)$.
\item[$(5)$] If $k>2d+1$ and $d\geq4$, then $\mathcal F\cong\mathcal H(k,d,k-d)$.
\end{enumerate}
\end{Corollary}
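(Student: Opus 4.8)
The plan is to derive Corollary \ref{cor:fourth_max_inter} entirely from the classification in Theorem \ref{thm:maximal_intersecting_d+2}, together with the size formulas \eqref{equ:H(k,d,l)}, \eqref{equ:G(k,d)}, \eqref{equ:S(k,3)}, \eqref{equ:S1(k,3)} and \eqref{equ:S2(k,3)_S3(k,3)}. Since $k>d+1\geq4$ forces $k\geq d+2\geq5$, Theorem \ref{thm:maximal_intersecting_d+2} applies for $n>n_1(k,d)$ and produces the complete list $\mathcal{L}$ of all maximal non-trivial $d$-wise intersecting families of size exceeding $(k-d-\frac12)\binom{n-d}{k-d}$. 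In each case of that theorem $\mathcal{L}$ has at least four members, so there are at least four maximal families above the threshold; hence the fourth largest maximal family is itself above the threshold and therefore belongs to $\mathcal{L}$. Because $\mathcal{L}$ contains every maximal family above the threshold, the four largest maximal families are exactly the four largest members of $\mathcal{L}$, and it only remains to sort $\mathcal{L}$ by size and read off the fourth. This sorting is carried out in Section \ref{sec:inequalities}, of which the present corollary is an immediate consequence.

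For the sorting I would first separate the families by the leading coefficient of their size, which by \eqref{equ:H(k,d,l)}--\eqref{equ:S2(k,3)_S3(k,3)} is $\sim c\binom{n}{k-d}$ with $c=d+1$ for $\mathcal{H}(k,d,2)=\mathcal{A}(k,d)$, $c=l$ for $\mathcal{H}(k,d,l)$ with $l>2$, and $c=k-d$ for $\mathcal{G}(k,d)$ and for each of $\mathcal{S}(k,3),\mathcal{S}_1(k,3),\mathcal{S}_2(k,3),\mathcal{S}_3(k,3)$. Combined with Corollaries \ref{cor:first_second_max_inter} and \ref{cor:third_max_inter}, which already pin down the first, second and third largest families, the fourth largest is simply the largest of the families that remain after deleting the top three. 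The case split in the statement is dictated by which families survive and which leading coefficients coincide: at $k=d+2$ the family $\mathcal{H}(k,d,k-d)$ collapses to $\mathcal{A}(k,d)$, so only $\mathcal{A}(k,d),\mathcal{C}(k,d),\mathcal{B}(k,d),\mathcal{G}(k,d)$ remain and $\mathcal{G}(k,d)$ is smallest by leading coefficient; once $k>2d+1$ the family $\mathcal{A}(k,d)$ drops out of $\mathcal{L}$ entirely; and for $d=3$ the four additional $\mathcal{S}$-families all enter with leading coefficient $k-d$.

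The crux is the tie-breaking among the cluster of families sharing the leading coefficient $k-d$, namely $\mathcal{H}(k,d,k-d)$, $\mathcal{G}(k,d)$ and, when $d=3$, the $\mathcal{S}$-families. Here I would cancel the common part of the size formulas, which is $\binom{n-d+1}{k-d+1}-\binom{n-k+1}{k-d+1}$ (respectively $\binom{n-2}{k-2}-\binom{n-k+1}{k-2}$ when $d=3$), and compare the residual terms by order in $n$: the residual is $(d-1)\binom{n-k+1}{2}$ of order $n^2$ for $\mathcal{H}(k,d,k-d)$, is $\binom{n-k-1}{k-d-1}+2(n-k)+d-3$ of order $n^{k-d-1}$ for $\mathcal{G}(k,d)$, and has orders $n^{k-5}$ and $n^{k-6}$ for the two pairs of $\mathcal{S}$-families. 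The decisive crossover occurs at $k-d=3$: for $k=d+3$ the residuals of $\mathcal{H}(k,d,k-d)$ and $\mathcal{G}(k,d)$ are both of order $n^2$ and the coefficient $d-1>1$ makes $\mathcal{H}(d+3,d,3)$ the larger, whereas for $k\geq d+4$ the residual of $\mathcal{G}(k,d)$ has strictly higher order $n^{k-d-1}$ and $\mathcal{G}(k,d)$ wins. For $d=3$ and $k>7$ the same accounting puts $\mathcal{G}(k,3)$ (order $n^{k-4}$) ahead of the $\mathcal{S}$-families, so by Corollary \ref{cor:third_max_inter} it occupies third place; the fourth place then goes to $\mathcal{S}_1(k,3)$, with the tie against $\mathcal{S}(k,3)$ broken by the linear term $n-k+3$ versus the constant $6$ in \eqref{equ:S1(k,3)} and \eqref{equ:S(k,3)}.

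The main obstacle is exactly this lower-order comparison within the constant-$(k-d)$ cluster: because several families agree to leading order, the size formulas must be expanded to the correct sub-leading order and their coefficients tracked, and the winner genuinely changes with the relation between $k$ and $d$ (the $k=d+3$ versus $k\geq d+4$ dichotomy, and the reshuffling once $\mathcal{A}(k,d)$ leaves the list at $k>2d+1$). The remaining difficulty is organizational---keeping straight the coincidences among the $\mathcal{H}(k,d,l)$ at small $k$, the appearance of the $\mathcal{S}$-families only when $d=3$, and the three top slots supplied by Corollaries \ref{cor:first_second_max_inter} and \ref{cor:third_max_inter}---after which the fourth largest family is read off directly in each of the five cases.
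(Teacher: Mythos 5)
Your proposal is correct and follows essentially the same route as the paper: invoke Theorem \ref{thm:maximal_intersecting_d+2} to get the complete list of maximal families above the threshold $(k-d-\frac{1}{2})\binom{n-d}{k-d}$, then sort that list by size — first by the leading coefficient $c$ in $c\binom{n}{k-d}$ and then by the lower-order residual terms, which is exactly the content of Lemmas \ref{lem:size_G(k,d)}--\ref{lem:size_A(k,d)} (in particular your $k=d+3$ versus $k\geq d+4$ crossover for $\mathcal H(k,d,k-d)$ against $\mathcal G(k,d)$ is Lemma \ref{lem:size_G(k,d)}(2), and your $n-k+3$ versus $6$ tie-break is Lemma \ref{lem:size_S(k,3)}(1)). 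The paper proves Corollaries \ref{cor:first_second_max_inter}--\ref{cor:sixth_max_inter_d=3} by carrying out this same full sort in a single combined case analysis.
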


\begin{Corollary}\label{cor:fifth_max_inter_d=3}
Let $k\geq 5$. For any $n>n_1(k,3)=3+2(k-3)^2(k^{k-3}-1)^kk!$, if $\mathcal F\subseteq \binom{[n]}{k}$ is the fifth largest maximal non-trivial $3$-wise intersecting family, then the following hold.
\begin{enumerate}
\item[$(1)$] If $k=5$, then $\mathcal F\cong\mathcal S_1(5,3)$.
\item[$(2)$] If $k=6$, then $\mathcal F\cong\mathcal G(6,3)$.
\item[$(3)$] If $k=7$, then $\mathcal F\cong\mathcal H(7,3,4)$.
\item[$(4)$] If $k\geq8$, then $\mathcal F\cong\mathcal S(k,3)$.
\end{enumerate}
\end{Corollary}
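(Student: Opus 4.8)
The plan is to specialize Theorem~\ref{thm:maximal_intersecting_d+2} to $d=3$ and convert the structural classification into a ranking of finitely many explicit families by cardinality. Every family appearing in parts~(1)--(3) of Theorem~\ref{thm:maximal_intersecting_d+2} has a leading term of order $c\binom{n}{k-3}$ with $c\geq k-3$, where $c$ is read off from \eqref{equ:H(k,d,l)}, \eqref{equ:G(k,d)}, \eqref{equ:S(k,3)}, \eqref{equ:S1(k,3)} and \eqref{equ:S2(k,3)_S3(k,3)}; in particular $\mathcal A(k,3)=\mathcal H(k,3,2)$ has $c=4$, which satisfies $c\geq k-3$ precisely for $k\leq 7$, exactly the range in which it is listed. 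Since $\binom{n-3}{k-3}\sim\binom{n}{k-3}$ and $k-3>k-\tfrac72$, each listed family exceeds the threshold $\left(k-\tfrac72\right)\binom{n-3}{k-3}$ for all large $n$. Hence the listed families are exactly the maximal non-trivial $3$-wise intersecting families lying above the threshold, so the five largest such families are the five largest members of this list, and it remains only to sort the list by size.

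The ranking I would extract from the cardinality formulas. The families $\mathcal H(k,3,k-1)=\mathcal B(k,3)$ and $\mathcal H(k,3,k-2)=\mathcal C(k,3)$ have leading coefficients $k-1$ and $k-2$ and are the two largest whenever $k\geq 7$, while $\mathcal H(k,3,k-3)$, $\mathcal G(k,3)$, $\mathcal S(k,3)$, $\mathcal S_1(k,3)$, $\mathcal S_2(k,3)$ and $\mathcal S_3(k,3)$ all share the leading coefficient $k-3$ and must be separated by lower-order terms. Writing each of the latter as the common block $\binom{n-2}{k-2}-\binom{n-k+1}{k-2}$ (obtained for the $\mathcal H$-families from the telescoping identity $\binom{n-2}{k-2}-\binom{n-l-2}{k-2}=\sum_{j=0}^{l-1}\binom{n-3-j}{k-3}$) plus a remainder, the remainders have respective degrees $k-4$ for $\mathcal G(k,3)$, $k-5$ for $\mathcal S(k,3)$ and $\mathcal S_1(k,3)$, $k-6$ for $\mathcal S_2(k,3)$ and $\mathcal S_3(k,3)$, and only $2$ for $\mathcal H(k,3,k-3)$. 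For $k\geq 8$ these degrees are strictly decreasing, so the three largest members of the cluster are $\mathcal G(k,3)>\mathcal S_1(k,3)>\mathcal S(k,3)$, the last inequality coming from $|\mathcal S_1(k,3)|-|\mathcal S(k,3)|=n-k-3>0$; this places $\mathcal S(k,3)$ fifth overall.

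The main obstacle is precisely this cluster of families sharing leading coefficient $k-3$, together with the genuine $k$-dependence of the answer. For small $k$ the coefficient $4$ of $\mathcal A(k,3)$ (the case $l=2$ of \eqref{equ:H(k,d,l)}, where the coefficient is $d+1=4$ rather than $l$) is comparable to or larger than $k-3$, so $\mathcal A(k,3)$ re-enters near the top of the ranking and the clean degree comparison no longer isolates the cluster. For $k\in\{5,6,7\}$ I would therefore compare the relevant cardinalities through the subleading coefficients of their binomial expansions; these refinements agree with the placements of the first four families already fixed in Corollaries~\ref{cor:first_second_max_inter}, \ref{cor:third_max_inter} and~\ref{cor:fourth_max_inter}, and they identify the fifth family as $\mathcal S_1(5,3)$, $\mathcal G(6,3)$ and $\mathcal H(7,3,4)$ respectively. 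A final delicate point is the boundary $k=8$, where the remainders of $\mathcal S_2(k,3)$ and $\mathcal S_3(k,3)$ also drop to degree $2$ and must be compared with the degree-$2$ remainder $2\binom{n-k+1}{2}$ of $\mathcal H(k,3,k-3)$ through their leading coefficients $\tfrac12$ and $1$; this reorders only the lower places and leaves $\mathcal S(k,3)$ fifth. Collecting the four cases yields the stated conclusion.
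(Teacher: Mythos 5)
Your proposal is correct and follows essentially the same route as the paper: Theorem \ref{thm:maximal_intersecting_d+2} reduces the problem to ranking a finite explicit list of families all lying above the threshold $(k-\tfrac72)\binom{n-3}{k-3}$, and the ranking is then read off from the cardinality formulas \eqref{equ:H(k,d,l)}--\eqref{equ:S2(k,3)_S3(k,3)}, exactly as the paper does via Lemmas \ref{lem:size_G(k,d)}--\ref{lem:size_A(k,d)}. Your ``common block plus remainder degree'' bookkeeping is a mild repackaging of those lemmas rather than a different argument, and the comparisons you assert but do not carry out for $k\in\{5,6,7\}$ (e.g.\ $|\mathcal H(7,3,2)|>|\mathcal G(7,3)|>|\mathcal H(7,3,4)|>|\mathcal S_1(7,3)|$) do check out against the paper's Lemma \ref{lem:size_S(k,3)}(2), Lemma \ref{lem:size_G(k,d)} and Lemma \ref{lem:size_A(k,d)}(2).
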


\begin{Corollary}\label{cor:sixth_max_inter_d=3}
Let $k\geq5$. For any $n>n_1(k,3)=3+2(k-3)^2(k^{k-3}-1)^kk!$, if $\mathcal F\subseteq \binom{[n]}{k}$ is the sixth largest maximal non-trivial $3$-wise intersecting family, then the following hold.
\begin{enumerate}
\item[$(1)$] If $k=5$, then $\mathcal F\cong\mathcal S(5,3)$.
\item[$(2)$] If $k\in\{6,7\}$, then $\mathcal F\cong\mathcal S_1(k,3)$.
\item[$(3)$] If $k=8$, then $\mathcal F\cong\mathcal H(8,3,5)$.
\item[$(4)$] If $k\geq9$, then $\mathcal F\cong\mathcal S_2(k,3)$ or $\mathcal F\cong\mathcal S_3(k,3)$.
\end{enumerate}
\end{Corollary}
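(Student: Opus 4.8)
The plan is to derive Corollary~\ref{cor:sixth_max_inter_d=3} from Theorem~\ref{thm:maximal_intersecting_d+2} by explicitly ranking a finite list of candidate families by cardinality. Specializing Theorem~\ref{thm:maximal_intersecting_d+2} to $d=3$, every maximal non-trivial $3$-wise intersecting family $\mathcal F\subseteq\binom{[n]}{k}$ with $|\mathcal F|>(k-\tfrac{7}{2})\binom{n-3}{k-3}$ is isomorphic to a member of an explicit list $\mathcal L_k$, consisting of $\mathcal H(k,3,k-3)$, $\mathcal H(k,3,k-2)$, $\mathcal H(k,3,k-1)$, $\mathcal G(k,3)$, $\mathcal S(k,3)$, $\mathcal S_1(k,3)$, together with $\mathcal S_2(k,3),\mathcal S_3(k,3)$ once $k\geq 6$ and with $\mathcal H(k,3,2)$ adjoined when $k\leq 7$. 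The first step is to verify, using the closed forms in \eqref{equ:H(k,d,l)}, \eqref{equ:G(k,d)}, \eqref{equ:S(k,3)}, \eqref{equ:S1(k,3)} and \eqref{equ:S2(k,3)_S3(k,3)}, that for $n$ large every member of $\mathcal L_k$ has size above the threshold: the leading coefficients of the cardinalities against $\binom{n}{k-3}$ are among $k-3$, $k-2$, $k-1$ and $4$, each of which exceeds $k-\tfrac{7}{2}$ in its range of validity. Consequently $\mathcal L_k$ is exactly the set of isomorphism types of maximal non-trivial families above the threshold, and since $|\mathcal L_k|\geq 6$, the sixth largest such family must lie in $\mathcal L_k$ whenever $n>n_1(k,3)$; the whole task reduces to ranking $\mathcal L_k$ by size.

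The two largest members are always $\mathcal H(k,3,k-1)$ and $\mathcal H(k,3,k-2)$, of leading coefficients $k-1$ and $k-2$; the remaining ranking is decided inside the block of families with leading coefficient $k-3$, namely $\mathcal H(k,3,k-3)$, $\mathcal G(k,3)$, $\mathcal S(k,3)$, $\mathcal S_1(k,3)$, $\mathcal S_2(k,3)$ and $\mathcal S_3(k,3)$ (to which $\mathcal H(k,3,2)$ belongs precisely when $k=7$, since only then does $4=k-3$). To order this block I would write each cardinality as $\binom{n-2}{k-2}-\binom{n-k+1}{k-2}$ plus a correction and compare the corrections, whose degrees in $n$ are $k-4$ for $\mathcal G(k,3)$, $k-5$ for $\mathcal S(k,3)$ and $\mathcal S_1(k,3)$, $k-6$ for $\mathcal S_2(k,3)=\mathcal S_3(k,3)$, and $2$ for $\mathcal H(k,3,k-3)$ (coming from $2\binom{n-k+1}{2}$). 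For $k\geq 9$ these degrees are strictly decreasing, $k-4>k-5>k-6>2$, so the block orders as $\mathcal G>\mathcal S_1>\mathcal S>\{\mathcal S_2,\mathcal S_3\}>\mathcal H(k,3,k-3)$; here $\mathcal S_1>\mathcal S$ because their equal degree-$(k-5)$ terms are broken by the constants $n-k+3$ versus $6$, and $\mathcal S_2,\mathcal S_3$ are equinumerous. Since this block occupies ranks three through eight, $\mathcal S_2(k,3)$ and $\mathcal S_3(k,3)$ land jointly in sixth place, which is part~(4).

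For $k\in\{5,6,7,8\}$ I would redo the same bookkeeping, the point being that two features force the case distinction. First, as $k$ decreases the correction degrees $k-4,k-5,k-6$ drop to $2$ or below and collide with the fixed degree-$2$ correction of $\mathcal H(k,3,k-3)$, so ties in degree appear and must be resolved by comparing the binomial coefficients themselves; the borderline is $k=8$, where $\mathcal S_2(8,3)=\mathcal S_3(8,3)$ and $\mathcal H(8,3,5)$ both have degree-$2$ corrections but $2\binom{n-7}{2}>\binom{n-11}{2}$ advances $\mathcal H(8,3,5)$ into sixth place, whereas for $k\geq 9$ the degree $k-6$ of $\binom{n-k-3}{k-6}$ exceeds $2$ and the order reverts. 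Second, $\mathcal H(k,3,2)$ re-enters $\mathcal L_k$ for $k\leq 7$, sitting in first, second and third place for $k=5,6,7$ and thereby shifting every rank; moreover for $k=5$ the extra degeneracy $k-3=2$ makes $\mathcal H(5,3,k-3)=\mathcal H(5,3,2)$ leave the bottom block entirely, leaving only $\mathcal G(5,3),\mathcal S(5,3),\mathcal S_1(5,3)$ there, so that the sixth (and smallest) family is $\mathcal S(5,3)$. Carrying out $k=5,6,7,8$ one at a time yields sixth-largest families $\mathcal S(5,3)$, $\mathcal S_1(6,3)$, $\mathcal S_1(7,3)$ and $\mathcal H(8,3,5)$, which are parts~(1)--(3). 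The main obstacle is precisely this low-degree regime: the ordering cannot be read off from leading terms, so one must compute the subleading binomial coefficients explicitly for each small $k$ while re-auditing which families clear the threshold. Throughout, I would cross-check against the third-, fourth- and fifth-largest determinations of Corollaries~\ref{cor:third_max_inter}--\ref{cor:fifth_max_inter_d=3}, which rest on the same comparisons and provide a running consistency check.
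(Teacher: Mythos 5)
Your proposal is correct and follows essentially the same route as the paper: invoke Theorem \ref{thm:maximal_intersecting_d+2} to reduce to the explicit list of extremal families (all of which clear the threshold $(k-\frac{7}{2})\binom{n-3}{k-3}$, with $\mathcal H(k,3,2)$ present only for $k\leq 7$), then rank that list by the closed-form cardinalities, with the case split at $k=8$ governed by the comparison $2\binom{n-7}{2}$ versus $\binom{n-11}{2}+6$ exactly as in the paper's Lemma \ref{lem:size_S(k,3)}. The paper packages the pairwise comparisons into Lemmas \ref{lem:size_G(k,d)}--\ref{lem:size_A(k,d)} and runs a case analysis on $k$, but the orderings you describe for each $k$ coincide with those established there.
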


\section{Proof of Theorem \ref{thm:maximal_intersecting_d+2}}\label{sec:2}

We open this section with a basic property about non-trivial $d$-wise intersecting families.


\begin{Lemma}\label{lem:inter_size}{\rm \cite[Lemma 2]{OV}}
Let $\mathcal F\subseteq\binom{[n]}{k}$ be a non-trivial $d$-wise intersecting family. For every $2\leq m\leq d$ and $A_1,\dots,A_m\in\mathcal F$, $|\bigcap_{i=1}^{m}A_i|\geq d-m+1$.
\end{Lemma}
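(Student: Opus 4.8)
The plan is to argue by contradiction, converting a hypothetically small intersection into a collection of at most $d$ members of $\mathcal F$ whose common intersection is empty, in direct violation of the $d$-wise intersecting hypothesis. The mechanism is to ``kill'' each element of the intersection $\bigcap_{i=1}^m A_i$ by spending one additional member of $\mathcal F$ per element, and to check that the total number of sets used never exceeds $d$.

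First I would fix $A_1,\dots,A_m\in\mathcal F$ with $2\leq m\leq d$, set $I=\bigcap_{i=1}^m A_i$, and suppose toward a contradiction that $|I|\leq d-m$. The decisive input is non-triviality: since $\mathcal F$ is non-trivial we have $\bigcap_{F\in\mathcal F}F=\emptyset$, so for every element $x$ there exists a set $B_x\in\mathcal F$ with $x\notin B_x$. Applying this to each $x\in I$ yields sets $\{B_x:x\in I\}\subseteq\mathcal F$, at most $|I|$ in number. Now consider the subcollection $\{A_1,\dots,A_m\}\cup\{B_x:x\in I\}$ of $\mathcal F$. Its common intersection is contained in $I$, yet it omits every $x\in I$ (because $B_x$ excludes $x$), so it is empty; and the number of distinct sets involved is at most $m+|I|\leq m+(d-m)=d$.

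Finally, since these at most $d$ distinct members of $\mathcal F$ have empty common intersection, I would pad the list with repetitions up to length exactly $d$ to obtain $F_1,\dots,F_d\in\mathcal F$ with $\bigcap_{i=1}^d F_i=\emptyset$, contradicting the assumption that $\mathcal F$ is $d$-wise intersecting. Hence $|I|\geq d-m+1$. I do not expect a genuine obstacle here; the only points requiring care are the bookkeeping that the extra sets number at most $d-m$ (so that together with $A_1,\dots,A_m$ at most $d$ sets are used) and the observation that repetitions among the chosen sets are harmless, since an intersection is unchanged by repeating a set, so it is the count of \emph{distinct} sets that matters when invoking the hypothesis. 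The boundary case $m=d$ is immediate, as then $I=\emptyset$ would already contradict $d$-wise intersecting with no auxiliary sets needed.
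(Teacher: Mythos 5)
Your argument is correct and complete: non-triviality supplies, for each $x$ in the intersection, a member of $\mathcal F$ avoiding $x$, and the count $m+|I|\leq d$ together with the fact that the $d$-wise condition quantifies over not-necessarily-distinct members yields the contradiction. The paper itself gives no proof of this lemma (it is quoted from \cite[Lemma 2]{OV}), and your argument is precisely the standard one used there, so there is nothing to add.
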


We say that $\mathcal G \subseteq 2^{[n]}$ is a {\em $t$-intersecting family}, if $|G_1\cap G_2|\geq t$ for any $G_1,G_2\in \mathcal G$.
Lemma \ref{lem:inter_size} implies that if $\mathcal F\subseteq\binom{[n]}{k}$ is a non-trivial $d$-wise intersecting family then $\mathcal F$ is a $(d-1)$-intersecting family. For $t$-intersecting families, there is the following folklore result.

\begin{Lemma}\label{lem:(t+1)_unif_t_inter}
If $\mathcal F\subseteq\binom{[n]}{t+1}$ is a $t$-intersecting family, then $\mathcal F$ is a subfamily of either  $\binom{[t+2]}{t+1}$ or $\{[t]\cup\{i\}:i\in[t+1,n]\}$ up to isomorphism.
\end{Lemma}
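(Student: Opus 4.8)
The plan is to exploit the extreme rigidity that $t$-intersection imposes on a $(t+1)$-uniform family, and then split into exactly two structural cases. \textbf{Step 1.} I would first record that any two \emph{distinct} members $F,F'\in\mathcal F$ satisfy $|F\cap F'|=t$: they are distinct $(t+1)$-sets, so $|F\cap F'|\leq t$, while the hypothesis gives $|F\cap F'|\geq t$. Hence each pair differs in exactly one element on each side. Fixing a pair $F_1\neq F_2$ and writing $C=F_1\cap F_2$, we get $F_1=C\cup\{a\}$ and $F_2=C\cup\{b\}$ with $|C|=t$ and $a\neq b$. (If $|\mathcal F|\leq 1$ the statement is trivial, as any single $(t+1)$-set lies in a star; so I assume $|\mathcal F|\geq 2$.)

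\textbf{Step 2 (the dichotomy).} The proof branches on whether \emph{every} member of $\mathcal F$ contains the fixed $t$-set $C$. If it does, then every member has the shape $C\cup\{x\}$ with $x\notin C$, so $\mathcal F\subseteq\{C\cup\{x\}:x\in[n]\setminus C\}$; applying the permutation sending $C$ to $[t]$ exhibits $\mathcal F$ as a subfamily of $\{[t]\cup\{i\}:i\in[t+1,n]\}$, and we are in the first alternative.

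\textbf{Step 3 (the clique case).} Otherwise there is a member $F_3\not\supseteq C$. Counting, for $F_3$, how many elements of $C\cup\{a\}$ and of $C\cup\{b\}$ it may omit (at most one each, by Step 1), I would deduce $F_3=(C\setminus\{y\})\cup\{a,b\}$ for a unique $y\in C$. The decisive claim is then that every $F\in\mathcal F$ satisfies $F\subseteq C\cup\{a,b\}$, a set of size $t+2$. To prove it I argue by contradiction: suppose $F$ contains some $z\notin C\cup\{a,b\}$. Then $z$ is the unique element of $F\setminus F_1$, forcing $F\subseteq C\cup\{a,z\}$, and likewise $F\subseteq C\cup\{b,z\}$; intersecting gives $F\subseteq C\cup\{z\}$, hence $F=C\cup\{z\}$. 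But now $F\setminus F_3\supseteq\{y,z\}$, so $|F\cap F_3|\leq t-1$, contradicting $t$-intersection. Thus $\mathcal F\subseteq\binom{C\cup\{a,b\}}{t+1}\cong\binom{[t+2]}{t+1}$, which is the second alternative.

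The only delicate point, and the step I expect to be the main obstacle, is the clique case: one must track the omission counts precisely and, crucially, invoke the \emph{third} reference set $F_3$ to expel any element outside $C\cup\{a,b\}$. The two sets $F_1,F_2$ alone are insufficient, since $z$ could otherwise be absorbed into a common kernel; it is exactly the extra constraint coming from a member that fails to contain $C$ that collapses the ground set to $t+2$ points.
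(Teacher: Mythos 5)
Your proof is correct and complete: the dichotomy on whether all members contain the kernel $C=F_1\cap F_2$, the identification $F_3=(C\setminus\{y\})\cup\{a,b\}$, and the use of $F_3$ to rule out any element outside $C\cup\{a,b\}$ all check out. The paper states this lemma as a folklore result and supplies no proof at all, so there is nothing to compare against; your argument is the standard one and fills that omission.
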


\subsection{$\Delta$-systems}

We will prove Theorem \ref{thm:maximal_intersecting_d+2} by using the $\Delta$-system method that was initially studied in \cite{DEF,ER,ER60,F78,F80}. A {\em $\Delta$-system} (or {\em sunflower}) with the {\em kernel} $X\subseteq [n]$ is a set ${\cal D}=\{D_1,D_2,\dots,D_s\}\subseteq 2^{[n]}$ such that $D_i\cap D_j=X$ for every $1\leq i< j\leq s$. For $\mathcal F\subseteq 2^{[n]}$ and $X\subseteq [n]$, define the {\em kernel degree} of $X$ in $\cal F$, $\delta_{\mathcal F}(X)$, to be the size of the largest $\Delta$-system in $\mathcal F$ admitting $X$ as the kernel.

The following lemma, discovered by Erd\H{o}s and Rado \cite{ER}, is a fundamental result in extremal set theory. It asserts that a large enough uniform family must contain a $\Delta$-system, regardless of the size of the $\Delta$-system.



\begin{Lemma}[Sunflower Lemma]\label{lem:Sunflower}{\rm \cite{ER}}
If $\mathcal F\subseteq\binom{[n]}{k}$ with $|\mathcal F|>k!(s-1)^k$, then $\mathcal F$ contains a $\Delta$-system of size $s$.
\end{Lemma}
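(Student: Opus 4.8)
The plan is to prove the statement by induction on the uniformity $k$, following the classical Erdős–Rado argument. For the base case $k=1$, the family $\mathcal F$ consists of singletons, and any two distinct singletons meet only in the empty set; so if $|\mathcal F|>1!\cdot(s-1)=s-1$, then $\mathcal F$ contains at least $s$ distinct singletons, which already form a $\Delta$-system of size $s$ with kernel $\emptyset$.

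For the inductive step I would assume the result for $(k-1)$-uniform families and let $\mathcal F\subseteq\binom{[n]}{k}$ satisfy $|\mathcal F|>k!(s-1)^k$. First I would select a \emph{maximal} subfamily $\{F_1,\dots,F_t\}\subseteq\mathcal F$ of pairwise disjoint sets. If $t\geq s$, these sets constitute a $\Delta$-system of size $s$ with empty kernel and we are done. Otherwise $t\leq s-1$, and writing $Y=F_1\cup\cdots\cup F_t$ we have $|Y|\leq (s-1)k$. By maximality every member of $\mathcal F$ meets $Y$, for if some $F\in\mathcal F$ were disjoint from $Y$ we could enlarge the disjoint subfamily, contradicting its maximality.

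The key step is then a pigeonhole on $Y$ followed by the inductive hypothesis. Since each set of $\mathcal F$ contains at least one element of $Y$, some fixed $y\in Y$ lies in at least $|\mathcal F|/|Y|$ members of $\mathcal F$. Setting $\mathcal F_y=\{F\setminus\{y\}:F\in\mathcal F,\ y\in F\}\subseteq\binom{[n]}{k-1}$, I would estimate
\[
|\mathcal F_y|\geq\frac{|\mathcal F|}{|Y|}\geq\frac{|\mathcal F|}{(s-1)k}>\frac{k!(s-1)^k}{(s-1)k}=(k-1)!(s-1)^{k-1}.
\]
By the induction hypothesis, $\mathcal F_y$ contains a $\Delta$-system $\{D_1,\dots,D_s\}$ with some kernel $X$; re-inserting $y$ yields $\{D_1\cup\{y\},\dots,D_s\cup\{y\}\}\subseteq\mathcal F$, a $\Delta$-system of size $s$ with kernel $X\cup\{y\}$, which completes the induction.

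The argument is entirely self-contained and I do not expect a serious obstacle; the only points requiring care are the bookkeeping in the inductive step—ensuring the disjoint subfamily is genuinely maximal so that the covering property of $Y$ is valid, and verifying that the pigeonhole bound survives division by $|Y|\leq(s-1)k$ so that $|\mathcal F_y|$ remains strictly above $(k-1)!(s-1)^{k-1}$ and the inductive hypothesis indeed applies. The factorial factor $k!$ in the hypothesis is exactly what is needed to absorb the loss of the factor $k$ at each stage of the recursion.
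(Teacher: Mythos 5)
Your proof is correct: it is the classical Erd\H{o}s--Rado induction on $k$ (maximal disjoint subfamily, covering set $Y$ of size at most $(s-1)k$, pigeonhole on an element $y\in Y$, and the link $\mathcal F\mapsto\mathcal F_y$), and the arithmetic $|\mathcal F_y|>k!(s-1)^k/((s-1)k)=(k-1)!(s-1)^{k-1}$ is exactly what the inductive hypothesis requires. The paper itself states this lemma as a citation to Erd\H{o}s and Rado without reproducing a proof, so there is no in-paper argument to compare against; your self-contained write-up matches the standard proof of the cited result.
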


Lemma \ref{lem:Sunflower} does not provide information on the size of the kernel of a $\Delta$-system in $\cal F$. Let $\mathcal F\subseteq\binom{[n]}{k}$ be a non-trivial $d$-wise intersecting family. Lemma \ref{lem:inter_size} shows that $\mathcal F$ is $(d-1)$-intersecting, and hence for any $X\in \bigcup_{i=0}^{d-2}\binom{[n]}{i}$, there is no $\Delta$-system of size $2$ in $\mathcal F$ that has the kernel $X$. For $X\in\binom{[n]}{d-1}$, the following lemma estimates the size of a $\Delta$-system in $\mathcal F$ admitting $X$ as the kernel. It is a slight improvement of \cite[Lemma 3]{OV} and is mentioned in the concluding section of \cite{OV} without proof.


\begin{Lemma}\label{lem:center_size}
Let $k\geq d\geq 3$. If $\mathcal F\subseteq\binom{[n]}{k}$ is a non-trivial $d$-wise intersecting family and $X\in\binom{[n]}{d-1}$, then $\delta_{\mathcal F}(X)<k-d+3$.
\end{Lemma}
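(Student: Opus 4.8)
The plan is to argue by contradiction: suppose $\delta_{\mathcal F}(X)\geq k-d+3$ and fix a $\Delta$-system $D_1,\dots,D_s\in\mathcal F$ with kernel $X$ and $s=k-d+3$, so that the petals $P_i:=D_i\setminus X$ are pairwise disjoint, each of size $k-d+1$, and each disjoint from $X$. The first step is to pin down how every member of $\mathcal F$ meets the kernel. Applying Lemma \ref{lem:inter_size} to the three members $F,D_1,D_2$ (for any $F\in\mathcal F$) and using that $D_1\cap D_2=X$ because the petals are disjoint, I get $|F\cap X|=|F\cap D_1\cap D_2|\geq d-2$. Thus every member of $\mathcal F$ contains all but at most one element of the $(d-1)$-set $X$.

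Next I would exploit non-triviality. Since $\bigcap_{F\in\mathcal F}F=\emptyset$, no element lies in every member, so for each $x\in X$ there is a member $B_x\in\mathcal F$ with $x\notin B_x$. Fix one such element $x_0$ and set $B:=B_{x_0}$. Combining $x_0\notin B$ with the first step forces $B\cap X=X\setminus\{x_0\}$ exactly, whence $|B\setminus X|=k-(d-2)=k-d+2$. Because the $s=k-d+3$ petals are pairwise disjoint and disjoint from $X$, any petal meeting $B$ must meet it inside $B\setminus X$, in pairwise disjoint nonempty sets; since $|B\setminus X|=k-d+2<s$, a pigeonhole count produces a petal $P_{i_0}$ with $P_{i_0}\cap B=\emptyset$. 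Consequently $B\cap D_{i_0}=(B\cap X)\cup(B\cap P_{i_0})=X\setminus\{x_0\}$.

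The contradiction is then assembled from exactly $d$ members of $\mathcal F$: namely $B$, $D_{i_0}$, and the $d-2$ members $B_x$ for $x\in X\setminus\{x_0\}$. Their common intersection is contained in $B\cap D_{i_0}=X\setminus\{x_0\}$, yet each $y\in X\setminus\{x_0\}$ is excluded by its own $B_y$, so the intersection is empty, contradicting the $d$-wise intersecting property and finishing the proof. I expect the main obstacle to be getting the constants exactly right: the entire argument hinges on the \emph{sharp} equality $|B\setminus X|=k-d+2$, which is strictly smaller than the assumed $\Delta$-system size $k-d+3$, and it is precisely this one-unit gap that lets a petal escape $B$. A secondary point requiring care is the bookkeeping for degenerate coincidences (two of the chosen members coinciding, or the case $d=3$ where $X\setminus\{x_0\}$ is a single element); these are handled by recalling that the defining condition allows repeated members, so any collection of at most $d$ members with empty intersection already violates $d$-wise intersection. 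One should also check the harmless applicability conditions for Lemma \ref{lem:inter_size}, i.e. that at least two petals are always available (true since $s\geq 3$) and that $3\leq d$.
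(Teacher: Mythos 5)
Your proof is correct, and it reaches the contradiction by a route that differs from the paper's in two respects, although both hinge on the same numerical tension: a member avoiding one element of the kernel has only $k-(d-2)=k-d+2$ elements outside $X$, one fewer than the $k-d+3$ pairwise disjoint petals. The paper fixes, for each $i\in[d-1]$, a witness $X_i$ with $i\notin X_i$, first proves $X_i\cap[d-1]=[d-1]\setminus\{i\}$ by intersecting two sunflower members with the other $d-2$ witnesses, then uses the $(d-1)$-intersecting property (Lemma \ref{lem:inter_size} with $m=2$) to force $X_i$ to meet \emph{every} petal, and lands on the cardinality contradiction $|X_i|\geq(d-2)+(k-d+3)=k+1>k$. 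You instead obtain the kernel bound $|F\cap X|\geq d-2$ for \emph{all} $F\in\mathcal F$ in one stroke from Lemma \ref{lem:inter_size} with $m=3$ applied to $F,D_1,D_2$ (a slightly cleaner derivation than the paper's, which needs a carefully chosen $d$-tuple for this sub-step), run the counting in the opposite direction to find a petal disjoint from $B=B_{x_0}$, and then exhibit an explicit $d$-tuple $B$, $D_{i_0}$, $\{B_y\}_{y\in X\setminus\{x_0\}}$ with empty intersection. The two arguments are essentially logically dual — ``$B$ must meet every petal, which is too many elements'' versus ``$B$ cannot meet every petal, so a violating $d$-tuple exists'' — and your version has the mild advantage of only needing the precise structure of the single witness $B$ rather than of all $d-1$ witnesses. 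Your closing caveats are handled correctly: the $d$-wise intersecting condition quantifies over arbitrary, not necessarily distinct, members, so coincidences among the chosen sets and the case $d=3$ cause no trouble, and $s=k-d+3\geq 3$ guarantees the two sunflower members needed in the first step.
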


\begin{proof} Without loss of generality, suppose, to the contrary, that $\{F_1,F_2,\dots,F_{k-d+3}\}\subseteq \mathcal F$ is a $\Delta$-system of size $k-d+3$ with $X=[d-1]$ as the kernel. By the non-triviality of $\mathcal F$, for each $i\in[d-1]$, there exists $X_{i}\in \mathcal F$ so that $i\notin X_i$.
Since $|F_1\cap F_2|=d-1$ and $\mathcal F$ is $d$-wise intersecting, for any $1\leq j \leq d-1$, $$F_1\cap F_2\cap(\bigcap_{i\in[d-1]\setminus\{j\}}X_i)=\{j\},$$
which implies that $X_i\cap [d-1]=[d-1]\setminus\{i\}$ for each $i\in[d-1]$.
On the other hand, $|X_i\cap F_l|\geq d-1$ for any $l\in[k-d+3]$ by Lemma \ref{lem:inter_size}, and hence $|X_i\cap(F_l\setminus[d-1])|\geq 1$. Therefore,
$$|X_i|\geq |X_i\cap[d-1]|+\sum_{l=1}^{k-d+3}|X_i\cap(F_l\setminus[d-1])|>k,$$
a contradiction.
\end{proof}

Let $\mathcal F\subseteq\binom{[n]}{k}$. Lemma \ref{lem:center_size} shows that for every $(d-1)$-subset $X\subseteq [n]$, the kernel degree of $X$ in $\mathcal F$ is less than $k-d+3$. We are interested in kernels that have larger degrees since they intersect more elements in $\cal F$ (see Lemma \ref{lem:bas_inter}). Let $\mathcal B_1(\mathcal F)$ be a collection of subsets of $[n]$ such that (1) for each $X\in \mathcal B_1(\mathcal F)$, $d\leq |X|<k$, and (2) there exists a $\Delta$-system $\mathcal D\subseteq\binom{[n]}{k}$ of cardinality $k^{|X|-d+1}$ in $\cal F$ that admits $X$ as its kernel. That is, $\mathcal B_1(\mathcal F)$ consists of kernels of $\Delta$-systems with large sizes.

To prove Lemmas \ref{lem:lem5}, \ref{lem:bas_inter} and \ref{lem:bas_sunflower}, we will often implicitly use a simple fact: given $X,Y\subseteq[n]$ and $\mathcal G\subseteq \binom{[n]}{k}$, if $\delta_{\mathcal G}(X)>|Y|$ then there exists $F\in\mathcal G$ such that $X\subseteq F$ and $(F\setminus X)\cap Y=\emptyset$.

\begin{Lemma}\label{lem:lem5}
Let $k>d\geq3$ and $\mathcal F\subseteq \binom{[n]}{k}$ be a maximal non-trivial $d$-wise intersecting family. If $B\in\mathcal B_1(\mathcal F)$, then $\{F\in\binom{[n]}{k}:B\subseteq F\}\subseteq \mathcal F$.
\end{Lemma}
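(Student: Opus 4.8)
The plan is to use maximality to turn the containment claim into a $d$-wise intersecting verification, and then exploit the large $\Delta$-system guaranteed by $B\in\mathcal{B}_1(\mathcal{F})$. Fix $F\in\binom{[n]}{k}$ with $B\subseteq F$; I want $F\in\mathcal{F}$. Suppose not. Since $\mathcal{F}$ is maximal, $\mathcal{F}\cup\{F\}$ fails to be $d$-wise intersecting, and as $\mathcal{F}$ itself is $d$-wise intersecting the witnessing tuple must involve $F$. Collapsing repeated entries yields distinct $G_1,\dots,G_m\in\mathcal{F}$ with $1\leq m\leq d-1$ and $F\cap G_1\cap\cdots\cap G_m=\emptyset$. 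Write $K=G_1\cap\cdots\cap G_m$; then $F\cap K=\emptyset$, and in particular $B\cap K=\emptyset$ because $B\subseteq F$.

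Next I bring in the $\Delta$-system. By definition of $\mathcal{B}_1(\mathcal{F})$ there is a $\Delta$-system $\{D_1,\dots,D_s\}\subseteq\mathcal{F}$ with kernel $B$ and $s=k^{|B|-d+1}$, whose petals $D_i\setminus B$ are pairwise disjoint. For each $i$ the family $\{D_i,G_1,\dots,G_m\}$ consists of at most $d$ members of $\mathcal{F}$, so the ($d$-wise, hence $(m+1)$-wise) intersecting property gives $D_i\cap K\neq\emptyset$; since $D_i=B\cup(D_i\setminus B)$ and $B\cap K=\emptyset$, this forces $(D_i\setminus B)\cap K\neq\emptyset$. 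As the petals are pairwise disjoint, these produce distinct elements of $K$, so $|K|\geq s=k^{|B|-d+1}$. On the other hand $K\subseteq G_1$ gives $|K|\leq k$. Since $|B|\geq d$ we have $k^{|B|-d+1}\geq k$, with strict inequality once $|B|>d$; hence the two bounds are compatible only when $|B|=d$, and then the equality $|K|=k=|G_1|$ forces $m=1$ and $K=G_1\in\mathcal{F}$.

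It remains to dispose of the boundary case $|B|=d$. Here $G:=K=G_1\in\mathcal{F}$ satisfies $G\cap B=\emptyset$ and meets each petal in exactly one point. Now consider the $d$ sets $D_1,\dots,D_{d-1},G$, all lying in $\mathcal{F}$; they are distinct because each $D_i\supseteq B$ while $G\cap B=\emptyset$ and $B\neq\emptyset$, and $s=k>d-1$ guarantees that $D_1,\dots,D_{d-1}$ exist. Since $d-1\geq 2$, the petals $D_1\setminus B,\dots,D_{d-1}\setminus B$ are pairwise disjoint, so $D_1\cap\cdots\cap D_{d-1}=B$, whence $D_1\cap\cdots\cap D_{d-1}\cap G=B\cap G=\emptyset$. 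This contradicts the $d$-wise intersecting property of $\mathcal{F}$. Therefore $B\cap K\neq\emptyset$ in every case, contradicting $B\cap K=\emptyset$; so the assumption $F\notin\mathcal{F}$ is untenable and $F\in\mathcal{F}$.

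The main obstacle is precisely this degenerate boundary $|B|=d$: the crude petal-counting only yields $|K|\geq k$, which does not by itself contradict $|K|\leq k$, so a separate argument is needed. The resolution I expect to be decisive is that for $d\geq 3$ the pairwise disjointness of at least two petals collapses $\bigcap_i D_i$ down to the kernel $B$, and one then plays $K$ (forced to be a genuine member of $\mathcal{F}$ disjoint from $B$) against $d-1$ of the $D_i$. One should also double-check the routine bookkeeping that the violating tuple may be taken as $m\leq d-1$ distinct members of $\mathcal{F}$ together with $F$, and that $d$-wise intersecting descends to $(m+1)$-wise intersecting by padding with repetitions.
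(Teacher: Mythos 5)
Your proof is correct and follows the same strategy as the paper's: use maximality to produce witnesses $G_1,\dots,G_m\in\mathcal F$ whose common intersection $K$ is disjoint from $B$, then play the pairwise disjoint petals of the $\Delta$-system with kernel $B$ against $K$. The paper runs the counting in the contrapositive direction --- since $|\bigcap_{i\in[d-1]}F_i|<k$ and there are $k$ pairwise disjoint petals, some petal misses the intersection, producing a member of $\mathcal F$ disjoint from it --- whereas you show every petal must meet $K$ and conclude $|K|\geq k^{|B|-d+1}\geq k$. Your separate treatment of the boundary case $|B|=d$, $|K|=k$ is in fact a genuine improvement in rigor: the paper's assertion that $|\bigcap_{i\in[d-1]}F_i|<k$ is left unjustified and fails precisely when all witnesses coincide with a single $G\in\mathcal F$ disjoint from $F$; your argument --- intersecting $G$ with $d-1$ members of the $\Delta$-system, whose common intersection collapses to $B$ because $d-1\geq 2$ petals are disjoint --- disposes of exactly this degenerate case that the paper leaves implicit.
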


\begin{proof}
Suppose, to the contrary, that there exist $B\in\mathcal B_1(\mathcal F)$ and $F\in\binom{[n]}{k}\setminus\mathcal F$ such that $B\subseteq F$.
Since $\mathcal F$ is maximal, there exist $F_1,\dots,F_{d-1}\in\mathcal F$ such that $F\cap(\bigcap_{i\in[d-1]}F_i)=\emptyset$, and so $B\cap(\bigcap_{i\in[d-1]}F_i)=\emptyset.$
Due to $k^{|B|-d+1}\geq k$, by the definition of $\mathcal B_1(\mathcal F)$, there exists a $\Delta$-system $\mathcal D$ of cardinality $k$ with $B$ as the kernel in $\mathcal F$.
Since $|\bigcap_{i\in[d-1]}F_i|<k=|\mathcal D|$, there exists $F'\in\mathcal D\subseteq\mathcal F$ such that $(F'\setminus B)\cap(\bigcap_{i\in[d-1]}F_i)=\emptyset$, and so $F'\cap(\bigcap_{i\in[d-1]}F_i)=\emptyset$, a contradiction because of the $d$-wise intersecting property of $\mathcal F$.
\end{proof}

Lemma \ref{lem:lem5} motivates us to study the set of minimal members in $\mathcal B_1(\mathcal F)$. Let $\mathcal B_2(\mathcal F)$ be the set of minimal members in $\mathcal B_1(\mathcal F)$ in the usual set inclusion sense, i.e.
$$\mathcal B_2(\mathcal F)=\{X\in\mathcal B_1(\mathcal F): \text{ there is no } Y\in\mathcal B_1(\mathcal F) \text{ such that }Y\subseteq X\}.$$
Let
$$\mathcal B_3(\mathcal F)=\{F\in\mathcal F: \text{ there is no } X\in\mathcal B_2(\mathcal F) \text{ such that }X\subseteq F\}.$$
Write $\mathcal B(\mathcal F)=\mathcal B_2(\mathcal F)\cup\mathcal B_3(\mathcal F)$. For $d\leq i\leq k$, let
$$\mathcal B^{(i)}=\{T\in\mathcal B(\mathcal F):|T|=i\}.$$
It follows that $\mathcal B(\mathcal F)=\bigcup_{i=d}^{k}\mathcal B^{(i)}$ and for each $F\in\mathcal F$, there exists a set $B\in\mathcal B(\mathcal F)$ such that $B\subseteq F$.

The following lemma shows that $\mathcal B(\mathcal F)$ is $(d-1)$-intersecting. It is a generalization of \cite[Lemmas 4 and 5]{OV}.

\begin{Lemma}\label{lem:bas_inter}
Let $k>d\geq 3$. If $\mathcal F\subseteq\binom{[n]}{k}$ is a non-trivial $d$-wise intersecting family, then $\mathcal B(\mathcal F)$ is a $(d-1)$-intersecting family.
Moreover, $|B\cap F|\geq d-1$ for any $B\in\mathcal B(\mathcal F)$ and $F\in\mathcal F$.
\end{Lemma}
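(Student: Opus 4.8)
The plan is to reduce the $(d-1)$-intersecting claim to the \emph{moreover} statement together with Lemma \ref{lem:inter_size}, and to prove the moreover statement by a sunflower-plus-non-triviality argument. Recall that every member of $\mathcal{B}(\mathcal{F})$ is either a minimal kernel $B\in\mathcal{B}_2(\mathcal{F})$---necessarily of size $d\leq |B|<k$ and the center of a $\Delta$-system of size $s=k^{|B|-d+1}\geq k$ in $\mathcal{F}$---or an element of $\mathcal{B}_3(\mathcal{F})\subseteq\mathcal{F}$ of size $k$. I would organize the whole argument around this dichotomy.

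First I would settle the moreover part, that $|B\cap F|\geq d-1$ for every $B\in\mathcal{B}(\mathcal{F})$ and $F\in\mathcal{F}$. When $B\in\mathcal{B}_3(\mathcal{F})$ this is immediate, since then $B,F\in\mathcal{F}$ and Lemma \ref{lem:inter_size} (with $m=2$) gives $|B\cap F|\geq d-1$. The substance is the case $B\in\mathcal{B}_2(\mathcal{F})$. Here I would argue by contradiction, assuming $t:=|B\cap F|\leq d-2$, and manufacture at most $d$ members of $\mathcal{F}$ with empty intersection. Writing $Y=B\cap F$, non-triviality supplies for each $y\in Y$ a set $G_y\in\mathcal{F}$ with $y\notin G_y$, removing $y$ from the running intersection. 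I would then take $F$, the $t$ sets $G_y$, and $r:=d-1-t\geq 1$ members of the $\Delta$-system centered at $B$, so that the common intersection is forced inside $Y\cap\bigcap_{y\in Y}G_y=\emptyset$, provided the chosen petals contribute only $B$ to the intersection with $F$. If $r\geq 2$ this is automatic, since any two distinct members of a $\Delta$-system meet exactly in its kernel $B$; if $r=1$ (i.e.\ $t=d-2$) I would instead select a single $D_i$ whose petal is disjoint from $F$, which exists because at most $|F\setminus B|=k-t$ of the $s\geq k$ petals can meet $F$ while $t\geq 1$. Either way these at most $d$ sets have empty common intersection, contradicting the $d$-wise intersecting property, so $t\geq d-1$.

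With the moreover part in hand, the $(d-1)$-intersecting claim splits into cases according to which of $\mathcal{B}_2(\mathcal{F}),\mathcal{B}_3(\mathcal{F})$ the two sets $B,B'$ lie in. If at least one of them lies in $\mathcal{F}$ (either it is in $\mathcal{B}_3$, or both are, so Lemma \ref{lem:inter_size} applies), the bound $|B\cap B'|\geq d-1$ is exactly the moreover statement. The only genuinely new case is $B,B'\in\mathcal{B}_2(\mathcal{F})$, which I would handle by playing the two $\Delta$-systems against each other: let $\mathcal{D}'=\{D_1',\dots,D_{s'}'\}$ be the sunflower with center $B'$ and petals $P_j'=D_j'\setminus B'$, where $s'\geq k$. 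Since each $D_j'\in\mathcal{F}$, the moreover part gives $|B\cap D_j'|\geq d-1$. If $|B\cap B'|\leq d-2$, then $|B\cap P_j'|=|B\cap D_j'|-|B\cap B'|\geq 1$ for every $j$, so $B$ meets all $s'$ pairwise-disjoint petals; as these intersections are disjoint subsets of $B$, this forces $s'\leq |B|<k\leq s'$, a contradiction. Hence $|B\cap B'|\geq d-1$.

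I expect the main obstacle to be the moreover part for $B\in\mathcal{B}_2(\mathcal{F})$, and within it the boundary subcase $t=d-2$: there one cannot rely on two sunflower petals meeting only in the center, and must instead verify by counting that a petal disjoint from $F$ is available. This is the single place where the precise sunflower size $s\geq k$ (guaranteed by $|B|\geq d$) and the bound $|B|<k$ are both invoked, and it is the only estimate on the petals that the argument requires.
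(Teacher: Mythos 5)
Your proof is correct, but it runs in the opposite direction from the paper's. The paper first establishes that $\mathcal B(\mathcal F)$ is $(d-1)$-intersecting by a case analysis on $\mathcal B_2(\mathcal F)$ versus $\mathcal B_3(\mathcal F)$, in each case summing the quantities $|(F_j\setminus B_1)\cap B_2|\geq d-1-|B_1\cap B_2|$ over the $k^{|B_1|-d+1}\geq k$ pairwise disjoint petals of the sunflower at $B_1$ to force $|B_2|>k$ (or $|D_1|>k$), and only afterwards deduces the ``moreover'' clause from the fact that every $F\in\mathcal F$ contains some $B_0\in\mathcal B(\mathcal F)$. You instead prove the ``moreover'' clause first, and for the essential case $B\in\mathcal B_2(\mathcal F)$, $F\in\mathcal F$ you do not use the petal-counting lower bound at all: you exhibit an explicit violating $d$-tuple, invoking non-triviality to kill each element of $B\cap F$ and using either two petals (whose intersection is exactly $B$) or, in the boundary case $|B\cap F|=d-2$, a single petal disjoint from $F$, whose existence you correctly justify by the disjointness of the $s\geq k$ petals against the $k-t<k$ elements of $F\setminus B$. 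Your subsequent $\mathcal B_2$--$\mathcal B_2$ case is then a one-line disjointness count ($s'\leq|B|<k\leq s'$), which is shorter than the paper's selection of a petal avoiding $B_2$ followed by another summation. The trade-off is that the paper's summation argument is uniform in $|B_1\cap B_2|$ and needs no boundary subcase, while your construction uses non-triviality directly (rather than only through Lemma \ref{lem:inter_size}) and yields the stronger statement $|B\cap F|\geq d-1$ for all $F\in\mathcal F$ in one step. Both arguments are sound; there is no gap in yours.
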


\begin{proof} If $B_1,B_2\in \mathcal B_3(\mathcal F)$, Lemma \ref{lem:inter_size} implies that $|B_1\cap B_2|\geq d-1$.
If $B_1\in \mathcal B_2(\mathcal F)$ and $B_2\in \mathcal B_3(\mathcal F)$, suppose, to the contrary,
that $|B_1\cap B_2|<d-1$.
By the definition of $\mathcal B_2(\mathcal F)$, there exists a $\Delta$-system $\{F_{1},F_{2},\dots,F_{k^{|B_1|-d+1}}\}$ with the kernel $B_1$ in $\mathcal F$.
By Lemma \ref{lem:inter_size}, for each $1\leq j\leq k^{|B_1|-d+1}$,
$|(F_j\setminus B_1)\cap B_2|=|F_j\cap B_2|-|B_1\cap B_2|\geq d-1-|B_1\cap B_2|$. Thus
\begin{equation}
\begin{aligned}
|B_2|\geq |B_1\cap B_2|+\sum_{j=1}^{k^{|B_1|-d+1}}|(F_j\setminus B_1)\cap B_2|\geq |B_1\cap B_2|+k(d-1-|B_1\cap B_2|)>k,\notag
\end{aligned}
\end{equation}
a contradiction. If $B_1,B_2\in \mathcal B_2(\mathcal F)$, assume that $|B_1\cap B_2|<d-1$. For $i\in\{1,2\}$, there exists a $\Delta$-system ${\cal D}_i$ in $\mathcal F$ with the kernel $B_i$. Due to $k^{|B_1|-d+1}>|B_2|$, there exists $D_1\in {\cal D}_1$ such that $(D_1\setminus B_1)\cap B_2=\emptyset$. It follows that for every $D_2\in {\cal D}_2$, $|D_1\cap D_2|=|D_1\cap(D_2\setminus B_2)|+|D_1\cap B_2|=|D_1\cap(D_2\setminus B_2)|+|B_1\cap B_2|$, which implies $|D_1\cap(D_2\setminus B_2)|\geq d-1-|B_1\cap B_2|$ by Lemma \ref{lem:inter_size}. Thus
$$|D_1|\geq |B_1\cap B_2|+\sum_{D_2\in {\cal D}_2}|D_1\cap(D_2\setminus B_2)|\geq |B_1\cap B_2|+k(d-1-|B_1\cap B_2|)>k,$$
a contradiction. Therefore, $\mathcal B(\mathcal F)$ is a $(d-1)$-intersecting family.

Furthermore, for each $F\in\mathcal F$, there exists $B_0\in\mathcal B(\mathcal F)$ such that $B_0\subseteq F$.
Since $\mathcal B(\mathcal F)$ is $(d-1)$-intersecting, $|B\cap B_0|\geq d-1$ for any $B\in\mathcal B(\mathcal F)$, and so $|B\cap F|\geq d-1$ for any $B\in\mathcal B(\mathcal F)$ and $F\in\mathcal F$.
\end{proof}

\begin{Lemma}\label{lem:bas_sunflower}
For $4\leq d+1\leq i\leq k$, there is no $\Delta$-system of size $k^{i-d}$ in $\mathcal B^{(i)}$.
\end{Lemma}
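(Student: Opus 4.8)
The plan is to argue by contradiction: suppose $\{T_1,\dots,T_s\}\subseteq\mathcal B^{(i)}$ is a $\Delta$-system of size $s=k^{i-d}$ with kernel $Y$. Since $i\geq d+1$ we have $s\geq k$. Because $\mathcal B(\mathcal F)$ is $(d-1)$-intersecting by Lemma \ref{lem:bas_inter}, the kernel satisfies $|Y|=|T_a\cap T_b|\geq d-1$, and since the $T_j$ are distinct of size $i$ we also have $|Y|\leq i-1$. The whole strategy is to upgrade this $\Delta$-system in $\mathcal B^{(i)}$ into a $\Delta$-system of the same size $k^{i-d}$ in $\mathcal F$ with the same kernel $Y$, thereby forcing $\delta_{\mathcal F}(Y)\geq k^{i-d}$, and then to contradict either Lemma \ref{lem:center_size} or the minimality built into the definitions of $\mathcal B_2(\mathcal F)$ and $\mathcal B_3(\mathcal F)$.

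I would split the construction of this $\Delta$-system into two cases according to the size of the petals. If $i=k$, then every element of $\mathcal B^{(k)}$ lies in $\mathcal B_3(\mathcal F)\subseteq\mathcal F$ (elements of $\mathcal B_2(\mathcal F)$ have size $<k$), so $\{T_1,\dots,T_s\}$ is already a $\Delta$-system in $\mathcal F$ and nothing needs to be done. If $i<k$, then $\mathcal B^{(i)}\subseteq\mathcal B_2(\mathcal F)\subseteq\mathcal B_1(\mathcal F)$, so each $T_j$ is the kernel of a $\Delta$-system $\mathcal D_j\subseteq\mathcal F$ of size $k^{i-d+1}$ whose petals $F\setminus T_j$ (of size $k-i$) are pairwise disjoint. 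I would then select one representative $F_j\in\mathcal D_j$ greedily, processing $j=1,\dots,s$ and requiring at each step that the petal of $F_j$ avoids the forbidden set $(V\setminus T_j)\cup\bigcup_{j'<j}(F_{j'}\setminus T_{j'})$, where $V=\bigcup_{j}T_j$. Using $|V\setminus T_j|=(s-1)(i-|Y|)$ and $|Y|\geq d-1$, this forbidden set has size at most $(s-1)(k-|Y|)\leq(s-1)(k-d+1)<k^{i-d+1}$, so a disjoint petal always exists. A short check that $F_a\cap F_b=Y$ (the kernel parts of distinct $T_j$ meet only in $Y$, while the petals were chosen disjoint from all other kernels and from each other) then shows $\{F_1,\dots,F_s\}$ is the desired $\Delta$-system, giving $\delta_{\mathcal F}(Y)\geq k^{i-d}$.

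To finish, I would distinguish the two possible sizes of $Y$. If $|Y|=d-1$, then Lemma \ref{lem:center_size} gives $\delta_{\mathcal F}(Y)<k-d+3$, whereas $\delta_{\mathcal F}(Y)\geq k^{i-d}\geq k\geq k-d+3$ since $d\geq3$, a contradiction. If $|Y|\geq d$, then $|Y|\leq i-1$ yields $k^{i-d}\geq k^{|Y|-d+1}$ together with $d\leq|Y|<k$, so $Y\in\mathcal B_1(\mathcal F)$; choosing a minimal $Y'\in\mathcal B_2(\mathcal F)$ with $Y'\subseteq Y$, the inclusion $Y'\subseteq Y\subseteq T_j$ contradicts the minimality of $T_j$ in $\mathcal B_1(\mathcal F)$ when $i<k$ (so $T_j\in\mathcal B_2(\mathcal F)$), and contradicts $T_j\in\mathcal B_3(\mathcal F)$ (which forbids any member of $\mathcal B_2(\mathcal F)$ below it) when $i=k$. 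The main obstacle is the greedy step: one must bound the forbidden set tightly enough that a free petal survives in each $\mathcal D_j$, and then carefully verify that the selected representatives intersect in exactly $Y$ rather than in some larger set. Everything else is bookkeeping with the $(d-1)$-intersecting property and the kernel-degree definitions.
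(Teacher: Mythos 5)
Your proof is correct and follows essentially the same route as the paper: bound the kernel size by $d-1\leq|Y|\leq i-1$, lift the $\Delta$-system from $\mathcal B^{(i)}$ to one in $\mathcal F$ with the same kernel by greedily choosing petals that avoid a forbidden set of size less than $k^{i-d+1}$, and then contradict Lemma \ref{lem:center_size} when $|Y|=d-1$ or the minimality encoded in $\mathcal B_2(\mathcal F)$ and $\mathcal B_3(\mathcal F)$ when $|Y|\geq d$. The only (harmless) difference is that you perform the lifting before splitting on $|Y|$, whereas the paper disposes of the case $|Y|=d-1$ first; your ordering is if anything slightly cleaner, since Lemma \ref{lem:center_size} is stated for $\Delta$-systems in $\mathcal F$.
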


\begin{proof} Suppose for contradiction that $\mathcal D=\{B_1,B_2,\dots,B_{k^{i-d}}\}$ is a $\Delta$-system in $\mathcal B^{(i)}$ with the kernel $B$. Then $|B|\leq i-1$ because of $k^{i-d}>1$. By Lemma \ref{lem:bas_inter}, any two members of $\mathcal B^{(i)}$ contain at least $d-1$ common elements, so $|B|\geq d-1$. If $|B|=d-1$, then $\delta_{\cal F}(B)<k-d+3$ by Lemma \ref{lem:center_size}, and so $|\mathcal D|<k-d+3$, which contradicts the fact that $k^{i-d}\geq k-d+3$. Thus $|B|\geq d$. On the whole, $d\leq |B|\leq i-1$.

If $i=k$, then $B\in \mathcal B_1(\mathcal F)$, contradicting the definition of $\mathcal B^{(k)}=\mathcal B_3(\mathcal F)$. Therefore, there is no $\Delta$-system of size $k^{k-d}$ in $\mathcal B^{(k)}$.

Assume that $d+1\leq i\leq k-1$. By the definition of $\mathcal B^{(i)}$, for every $1\leq j\leq k^{i-d}$, there exists a $\Delta$-system ${\cal D}_j$ of size $k^{i-d+1}$ in $\mathcal F$ with the kernel $B_j$.
Since $|\bigcup_{j=2}^{k^{i-d}}B_{j}|<k\cdot k^{i-d}=|{\cal D}_1|$, there exists $D_1\in{\cal D}_1$ such that $$(D_1\setminus B_1)\cap(B_2\cup\dots\cup B_{k^{i-d}})=\emptyset.$$
Since $|D_1\cup(\bigcup_{j=3}^{k^{i-d}}B_{j}|)<k\cdot k^{i-d}=|{\cal D}_2|$, there exists $D_2\in{\cal D}_2$ such that
$$(D_2\setminus B_2)\cap(D_1\cup B_3\cup\dots\cup B_{k^{i-d}})=\emptyset.$$
Similarly, since $|D_1\cup D_2\cup(\bigcup_{j=4}^{k^{i-d}}B_{j})|<k\cdot k^{i-d}=|{\cal D}_3|$, there exists $D_3\in{\cal D}_3$ such that
$$(D_3\setminus B_3)\cap(D_1\cup D_2\cup B_4\cup\dots\cup B_{k^{i-d}})=\emptyset.$$
Continuing the above process, we obtain $D_1,D_2,\dots,D_{k^{i-d}}$ which form a $\Delta$-system in $\cal F$ with the kernel $B$. Since $d\leq |B|\leq i-1$, $B\in \mathcal B_1(\mathcal F)$. This contradicts the fact that $\mathcal B^{(i)}\subseteq \mathcal B_2(\mathcal F)$ and $\mathcal B_2(\mathcal F)$ is the set of minimal members in $\mathcal B_1(\mathcal F)$. Therefore, there is no $\Delta$-system of size $k^{i-d}$ in $\mathcal B^{(i)}$ for $d+1\leq i\leq k-1$.
\end{proof}

\begin{Lemma}\label{lem:size_B^d}
Let $4\leq d+1\leq k$ and $n>d+2(k-d)^2(k^{k-d}-1)^kk!$.
If $\mathcal F\subseteq \binom{[n]}{k}$ is a non-trivial $d$-wise intersecting family and $|\mathcal F|>(k-d-\frac{1}{2})\binom{n-d}{k-d}$, then $|{\cal B}^{(d)}|\geq k-d$.
\end{Lemma}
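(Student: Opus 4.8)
The plan is to bound $|\mathcal{F}|$ from above in terms of the numbers $|\mathcal{B}^{(i)}|$ and then isolate the leading term $|\mathcal{B}^{(d)}|$. The starting point is the fact, recorded just after the definition of $\mathcal{B}(\mathcal{F})$, that every $F\in\mathcal{F}$ contains some $B\in\mathcal{B}(\mathcal{F})$. Thus the up-sets generated by the members of $\mathcal{B}(\mathcal{F})$ cover $\mathcal{F}$, and since a fixed $i$-set lies in exactly $\binom{n-i}{k-i}$ members of $\binom{[n]}{k}$, a union bound gives
$$|\mathcal{F}|\leq\sum_{B\in\mathcal{B}(\mathcal{F})}\binom{n-|B|}{k-|B|}=\sum_{i=d}^{k}|\mathcal{B}^{(i)}|\binom{n-i}{k-i}.$$
Here the $i=d$ term $|\mathcal{B}^{(d)}|\binom{n-d}{k-d}$ is of dominant order in $n$, and the goal is to show that the remaining terms contribute less than $\tfrac12\binom{n-d}{k-d}$, so that $|\mathcal{F}|<(|\mathcal{B}^{(d)}|+\tfrac12)\binom{n-d}{k-d}$.

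The next step is to control each $|\mathcal{B}^{(i)}|$ for $d+1\leq i\leq k$. By Lemma \ref{lem:bas_sunflower}, the $i$-uniform family $\mathcal{B}^{(i)}$ contains no $\Delta$-system of size $k^{i-d}$, so the Sunflower Lemma (Lemma \ref{lem:Sunflower}) forces $|\mathcal{B}^{(i)}|\leq i!(k^{i-d}-1)^i\leq k!(k^{k-d}-1)^k$. Since $\binom{n-i}{k-i}$ is strictly decreasing in $i$ on $d\leq i\leq k$ (the ratio of consecutive terms is $(k-i)/(n-i)<1$), each term with $i\geq d+1$ is at most $\binom{n-d-1}{k-d-1}=\frac{k-d}{n-d}\binom{n-d}{k-d}$. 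Summing the $k-d$ terms yields
$$\sum_{i=d+1}^{k}|\mathcal{B}^{(i)}|\binom{n-i}{k-i}\leq(k-d)\,k!(k^{k-d}-1)^k\cdot\frac{k-d}{n-d}\binom{n-d}{k-d}=\frac{(k-d)^2k!(k^{k-d}-1)^k}{n-d}\binom{n-d}{k-d}.$$
The hypothesis $n>d+2(k-d)^2(k^{k-d}-1)^kk!$ is exactly what makes the coefficient $\frac{(k-d)^2k!(k^{k-d}-1)^k}{n-d}$ strictly less than $\tfrac12$, so the tail stays below $\tfrac12\binom{n-d}{k-d}$.

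Combining the two displays gives $|\mathcal{F}|<(|\mathcal{B}^{(d)}|+\tfrac12)\binom{n-d}{k-d}$. Were $|\mathcal{B}^{(d)}|\leq k-d-1$, this would force $|\mathcal{F}|\leq(k-d-\tfrac12)\binom{n-d}{k-d}$, contradicting the assumed lower bound $|\mathcal{F}|>(k-d-\tfrac12)\binom{n-d}{k-d}$; hence $|\mathcal{B}^{(d)}|\geq k-d$. Once the covering inequality is in place the argument is purely quantitative, with no structural obstacle. The one delicate point — and the reason the threshold $n_1(k,d)$ takes the precise form stated — is balancing the uniform sunflower bound $k!(k^{k-d}-1)^k$ on each $|\mathcal{B}^{(i)}|$ against the decay factor $\frac{k-d}{n-d}$ of the binomial coefficients, so that the $k-d$ lower-order terms together remain within the $\tfrac12$ budget that the size hypothesis is calibrated to leave.
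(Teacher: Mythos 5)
Your proposal is correct and follows essentially the same route as the paper: the covering bound $|\mathcal F|\leq\sum_{i=d}^k|\mathcal B^{(i)}|\binom{n-i}{k-i}$, the sunflower-based estimate $|\mathcal B^{(i)}|\leq i!(k^{i-d}-1)^i$ from Lemmas \ref{lem:Sunflower} and \ref{lem:bas_sunflower}, and the comparison of the tail against $\tfrac12\binom{n-d}{k-d}$ using $\binom{n-d-1}{k-d-1}=\frac{k-d}{n-d}\binom{n-d}{k-d}$ and the stated threshold for $n$. The only cosmetic difference is that you bound each $|\mathcal B^{(i)}|$ by the uniform quantity $k!(k^{k-d}-1)^k$ before summing, whereas the paper sums first and then bounds the constant $C$; the resulting estimate is identical.
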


\begin{proof} Combining Lemmas \ref{lem:Sunflower} and \ref{lem:bas_sunflower}, we have $|\mathcal B^{(i)}|\leq i!(k^{i-d}-1)^i$ for every $d+1\leq i\leq k$. On one hand,
$$|\mathcal F|\leq \sum_{i=d}^{k}|\mathcal B^{(i)}|\binom{n-i}{k-i}\leq |\mathcal B^{(d)}|\binom{n-d}{k-d}+
C\binom{n-d-1}{k-d-1},$$
where $C=\sum_{i=d+1}^{k}i!(k^{i-d}-1)^i\leq (k-d)(k^{k-d}-1)^kk!$ is an absolute constant. On the other hand, by the assumption,
$$|\mathcal F|>(k-d-\frac{1}{2})\binom{n-d}{k-d}.$$
It follows that
\begin{equation}\label{equ:size_Bd}
\begin{aligned}
|\mathcal B^{(d)}|\geq \left\lceil k-d-\frac{1}{2}-C\frac{\binom{n-d-1}{k-d-1}}{\binom{n-d}{k-d}} \right\rceil = \left\lceil k-d-\frac{1}{2}-C\frac{(k-d)}{n-d} \right\rceil \geq k-d,\notag
\end{aligned}
\end{equation}
where the last inequality holds because of $n>d+2(k-d)^2(k^{k-d}-1)^kk!$.
\end{proof}

\subsection{Structure of maximal non-trivial $d$-wise intersecting families}

Let $H$ be a hypergraph. A set $T$ is called a {\em cover} of $H$ if it intersects every hyperedge of $H$.
Let $\nu(H)$ and $\tau(H)$ denote the maximum number of disjoint hyperedges in $H$ and the minimum cardinality of covers of $H$, respectively.

\begin{proof}[{\bf Proof of Theorem \ref{thm:maximal_intersecting_d+2}:}]
Suppose that $\mathcal F\subseteq \binom{[n]}{k}$ is a maximal non-trivial $d$-wise intersecting family and $|\mathcal F|>(k-d-\frac{1}{2})\binom{n-d}{k-d}$. We will prove that $\mathcal F$ must be one of the families $\mathcal H(k,d,l)$, $\mathcal G(k,d)$, ${\cal S}(k,3)$, ${\cal S}_1(k,3)$, ${\cal S}_2(k,3)$ and ${\cal S}_3(k,3)$ up to isomorphism. Note that Appendix \ref{Sec:B} gives lower bounds for the sizes of these extremal families and we can see that their sizes are all greater than $(k-d-\frac{1}{2})\binom{n-d}{k-d}$ for any $n>n_1(k,d)$.

Since $\mathcal B^{(d)}$ is $(d-1)$-intersecting by Lemma \ref{lem:bas_inter}, we can apply Lemma \ref{lem:(t+1)_unif_t_inter} together with renaming elements in $[n]$ such that $\mathcal B^{(d)}$ is a subfamily of $\binom{[d+1]}{d}$ or $\{[d-1]\cup\{i\}:i\in[d,n]\}$. Also by Lemma \ref{lem:size_B^d}, when $n>n_1(k,d)$ and $k>d+1\geq 4$,
\begin{align}\label{eqn:B^d}
|{\cal B}^{(d)}|\geq k-d\geq 2,
\end{align}

\noindent{\underline{\bf Case 1 $\mathcal B^{(d)}\subseteq \binom{[d+1]}{d}$.}}
If $\mathcal B^{(d)}=\{B_1,B_2\}$, then Lemma \ref{lem:bas_inter} implies that $|B_1\cap B_2|=d-1$, and hence $\mathcal B^{(d)}$ is isomorphic to a subfamily of $\{[d-1]\cup\{i\}:i\in[d,n]\}$, which will be discussed in Case 2.

Assume that $|\mathcal B^{(d)}|\geq 3$. Let $B_1,B_2,B_3\in \mathcal B^{(d)}$.
Since $B_j \in \binom{[d+1]}{d}$ for $1\leq j\leq 3$, $B_1\cap B_2$, $B_1\cap B_3$ and $B_2\cap B_3$ are pairwise distinct. If there would be $F\in\mathcal F$ such that $F\cap B_1=F\cap B_2=F\cap B_3$, then $F\cap B_1\subseteq B_1\cap B_2$ and $F\cap B_1\subseteq B_1\cap B_3$.
By Lemma \ref{lem:bas_inter}, $|F\cap B_1|\geq d-1$ and $|B_1\cap B_2|=|B_1\cap B_3|=d-1$, so $F\cap B_1=B_1\cap B_2$ and $F\cap B_1=B_1\cap B_3$, which yields $B_1\cap B_2=B_1\cap B_3$, a contradiction. Thus for any $F\in\mathcal F$, there exist $1\leq j_1<j_2\leq 3$ such that $F\cap B_{j_1}\neq F\cap B_{j_2}$. Since $B_{j_1}\cup B_{j_2}=[d+1]$, for any $F\in\mathcal F$, we have $|F\cap[d+1]|\geq d$, which gives $\mathcal F\subseteq \mathcal H(k,d,2)=\mathcal A(k,d)$. Note that by Lemma \ref{lem:size_H(k,d,l)_and_bound} in Appendix \ref{Sec:B}, $|{\mathcal H}(k,d,2)|>(k-d-\frac{1}{2})\binom{n-d}{k-d}$ only if $k<2d+2$, and only $(1)$, $(2)$ and $(4)$ of Theorem \ref{thm:maximal_intersecting_d+2} deal with the case of $k<2d+2$.

\noindent{\underline{\bf Case 2 $\mathcal B^{(d)}\subseteq\{[d-1]\cup\{i\}:i\in[d,n]\}$.}}
Without loss of generality assume that
$$\mathcal B^{(d)}=\{[d-1]\cup\{i\}:i\in[d,d+q]\}$$
for some $1\leq q\leq n-d$. By \eqref{eqn:B^d}, $q+1=|\mathcal B^{(d)}|\geq k-d$, so $q\geq k-d-1$.
We claim that $q\leq k-d+1$. Suppose for contradiction that $q\geq k-d+2$. By the non-triviality of $\mathcal F$, there exists $X\in\mathcal F$ such that $1\notin X$.
For any $B\in \mathcal B^{(d)}$, $|B\cap X|\geq d-1$ by Lemma \ref{lem:bas_inter}, which implies $B\setminus\{1\}\subseteq X$. Thus $\bigcup_{B\in\mathcal B^{(d)}}(B\setminus\{1\})=[2,d+q]\subseteq X$. This yields a contradiction since $|X|\geq d+q-1\geq k+1$.
Therefore, $$k-d-1\leq q\leq k-d+1.$$

There are three cases to consider according to the values of $q$. Before proceeding to the three cases, we define some subfamilies of $\mathcal F$. Let $$\mathcal F_1=\{F\in\mathcal F:[d-1]\subseteq F,F\cap[d,d+q]\neq\emptyset\},$$
$$\mathcal F_2=\{F\in\mathcal F: |F\cap [d-1]|\leq d-2\},$$
and
$$\mathcal F_3=\{F\in\mathcal F:[d-1]\subseteq F,F\cap[d,d+q]=\emptyset\}.$$
Then $\mathcal F=\mathcal F_1\cup\mathcal F_2\cup\mathcal F_3$, and $\mathcal F_1$, $\mathcal F_2$ and $\mathcal F_3$ are pairwise disjoint. For $i\in[d-1]$, let
$\mathcal F_2^{(i)}=\{F\in\mathcal F:i\notin F\}.$
Then $\mathcal F_2=\bigcup_{i\in[d-1]}\mathcal F_{2}^{(i)}$.
Since $\mathcal F$ is non-trivial, for every $i\in[d-1]$, there exists $X\in\mathcal F$ such that $i\notin X$, and hence $\mathcal F_2^{(i)}\neq\emptyset$ for every $i\in[d-1]$. For any $F\in\mathcal F_2^{(i)}$ and $B\in\mathcal B^{(d)}$, by Lemma \ref{lem:bas_inter},
\begin{equation}\label{equ:size_F2_with_[d-1]}
\begin{aligned}
|F\cap [d-1]|=|F\cap B|-|F\cap(B\setminus[d-1])|\geq d-1-|F\cap(B\setminus[d-1])|\geq d-2.
\end{aligned}
\end{equation}
On the other hand, $|F\cap[d-1]|\leq d-2$ for any $F\in\mathcal F_2^{(i)}$. Thus $|F\cap[d-1]|=d-2$ and by (\ref{equ:size_F2_with_[d-1]}) it holds that $|F\cap(B\setminus[d-1])|=1$. The former yields $F\cap [d-1]=[d-1]\setminus\{i\}$ for any $F\in\mathcal F_2^{(i)}$, and the latter implies $B\setminus[d-1]\subseteq F$ for any $B\in\mathcal B^{(d)}$ and so $[d,d+q]\subseteq F$.
Therefore, for every $i\in[d-1]$,
$$\mathcal F_{2}^{(i)}=\{F\in\mathcal F: i\notin F, [d+q]\setminus\{i\}\subseteq F\}.$$

\noindent{\underline{\bf Case 2.1 $q=k-d+1$.}} For every $i\in[d-1]$,
$\mathcal F_2^{(i)}=\{X_i\}$, where $X_i=[k+1]\setminus\{i\}$.
Thus
$$\mathcal F_2=\{X_i:i\in[d-1]\}.$$
If $\mathcal F_3\neq \emptyset$, then there would be $F_3\in\mathcal F_3$ such that $F_3\cap(\bigcap_{i\in[d-1]}X_i)=\emptyset$, which leads to a contradiction since $\mathcal F$ is $d$-wise intersecting, and so $\mathcal F_3=\emptyset$. It is readily checked that $\mathcal F=\mathcal F_1\cup\mathcal F_2 \subseteq\mathcal H(k,d,k-d+2)$.

\noindent{\underline{\bf Case 2.2 $q=k-d$.}} For every $i\in[d-1]$,
$\mathcal F_{2}^{(i)}=\{F\in\mathcal F: i\notin F,[k]\setminus\{i\}\subseteq F\}$. Thus
$$\mathcal F_2=\{F\in\mathcal F:|F\cap[d-1]|=d-2,[d,k]\subseteq F\}.$$
If $\mathcal F_3=\emptyset$, then $\mathcal F=\mathcal F_1\cup\mathcal F_2 \subseteq\mathcal H(k,d,k-d+1)$. Assume that $\mathcal F_3\neq\emptyset$. If there would be $1\leq j_1<j_2\leq d-1$ such that $F_{j_1}\in\mathcal F_2^{(j_1)}$, $F_{j_2}\in \mathcal F_2^{(j_2)}$ and $F_{j_1}\setminus[k]\neq F_{j_2}\setminus[k]$, then $F_{j_1}\cap F_{j_2}=[k]\setminus\{j_1,j_2\}$. For any $F_3\in\mathcal F_3$, $F_3\cap F_{j_1}\cap F_{j_2}=[d-1]\setminus\{j_1,j_2\}$. Take $X_i\in\mathcal F_2^{(i)}$ for  $i\in[d-1]\setminus\{j_1,j_2\}$.
Since $i\notin X_i$, $F_3\cap F_{j_1}\cap F_{j_2}\cap (\bigcap_{i\in[d-1]\setminus\{j_1,j_2\}}X_i)=\emptyset$, a contradiction.
Thus $F_{j_1}\setminus[k]=F_{j_2}\setminus[k]$ for any $F_{j_1}\in\mathcal F_2^{(j_1)}$ and $F_{j_2}\in \mathcal F_2^{(j_2)}$. This implies that there exists a unique $x\in[k+1,n]$ such that $\{x\}=F\setminus[k]$ for any $F\in{\cal F}_2=\bigcup_{i\in[d-1]}\mathcal F_2^{(i)}$.
Without loss of generality assume that $x=k+1$. Then
$$\mathcal F_2=\{[k+1]\setminus\{i\}:i\in[d-1]\}.$$
Note that $[2,k+1]\in\mathcal F_2$.
For any $F_3\in\mathcal F_3$, since $|[2,k+1]\cap F_3|\geq d-1$ by Lemma \ref{lem:inter_size} and $F_3\cap [d,k]=\emptyset$ by the definition of ${\cal F}_3$, $F_3$ must contain $k+1$.
Therefore,
$${\cal F}_1\cup{\cal F}_3=\{F\in {\cal F}:[d-1]\subseteq F, F\cap[d,k+1]\neq \emptyset\}.$$
It is readily checked that $\mathcal F=\mathcal F_1\cup\mathcal F_2\cup \mathcal F_3\subseteq\mathcal H(k,d,k-d+2)$.

\noindent{\bf \underline{Case 2.3 $q=k-d-1$.}} For every $i\in[d-1]$,
$\mathcal F_{2}^{(i)}=\{F\in\mathcal F:i\notin F,[k-1]\setminus\{i\}\subseteq F\}$. Thus
\begin{align}\label{eqn:f2-611}
\mathcal F_2=\{F\in\mathcal F:|F\cap[d-1]|=d-2,[d,k-1]\subseteq F\}.
\end{align}
If $\mathcal F_3=\emptyset$, then $\mathcal F=\mathcal F_1\cup\mathcal F_2 \subseteq\mathcal H(k,d,k-d)$.

In what follows, we always assume that $\mathcal F_3\neq\emptyset$. We construct an auxiliary graph $G=(V(G),E(G))$, where $V(G)=[k,n]$ and
$$E(G)=\left\{e\in\binom{[k,n]}{2}: e=F\setminus[k-1] ,F\in \mathcal F_2 \right\}.$$
For each $i\in[d-1]$, let $G_i=(V(G),E(G_i))$ be a subgraph of $G$, where
$$E(G_i)=\{e\in E(G):e=F\setminus[k-1] ,F\in \mathcal F_2^{(i)}\}.$$

\noindent{\bf\underline{Case 2.3.1 $\tau(G)=1$.}} If $|E(G)|=1$, let $E(G)=\{e\}$. Then for any $F\in {\cal F}_2$, $F\setminus [k-1]=e$. By \eqref{eqn:f2-611} we have
$$\mathcal F_2=\{F\in\mathcal F:|F\cap[d-1]|=d-2,[d,k-1]\cup e\subseteq F\}.$$
Take $[2,k-1]\cup e \in \mathcal F_2$. It follows from Lemma \ref{lem:inter_size} that for any $F_3\in \mathcal F_3$, $|F_3\cap([2,k-1]\cup e)|=d-2+|F_3\cap e|\geq d-1$, and so $F_3\cap e\neq\emptyset$ for every $F_3\in \mathcal F_3$. Therefore,
$${\cal F}_1\cup{\cal F}_3=\{F\in {\cal F}:[d-1]\subseteq F, F\cap([d,k-1]\cup e)\neq \emptyset\}.$$
Without loss of generality assume that $e=\{k,k+1\}$. Then $\mathcal F\subseteq\mathcal H(k,d,k-d+2)$.

If $|E(G)|=m\geq2$, due to $\tau(G)=1$, let $E(G)=\{\{y,y_1\},\{y,y_2\},\dots,\{y,y_m\}\}$. By \eqref{eqn:f2-611},
$$\mathcal F_2=\{F\in\mathcal F:|F\cap[d-1]|=d-2,[d,k-1]\cup \{y\} \subseteq F\}.$$
Since $m\geq 2$, there exist $1\leq j_1<j_2\leq d-1$ such that $F_{j_1}=([k-1]\setminus\{j_1\})\cup\{y,y_{m_1}\}\in \mathcal F_2^{(j_1)}$ and $F_{j_2}=([k-1]\setminus\{j_2\})\cup\{y,y_{m_2}\}\in \mathcal F_2^{(j_2)}$, where $1\leq m_1,m_2\leq m$ and $m_1\neq m_2$. So $F_{j_1}\cap F_{j_2}=([k-1]\setminus\{j_1,j_2\})\cup\{y\}$. Take $X_i\in\mathcal F_2^{(i)}$ for $i\in[d-1]\setminus\{j_1,j_2\}$. Then $F_{j_1}\cap F_{j_2}\cap (\bigcap_{i\in[d-1]\setminus\{j_1,j_2\}}X_i)=[d,k-1]\cup\{y\}$. If there would be $F_3\in\mathcal F_3$ such that $y\notin F_3$, then $F_3\cap F_{j_1}\cap F_{j_2}\cap (\bigcap_{i\in[d-1]\setminus\{j_1,j_2\}}X_i)=\emptyset$, a contradiction. Hence $y\in F_3$ for any $F_3\in \mathcal F_3$. This yields
$${\cal F}_1\cup{\cal F}_3=\{F\in {\cal F}:[d-1]\subseteq F, F\cap([d,k-1]\cup \{y\})\neq \emptyset\}.$$
Without loss of generality assume that $y=k$. Then $\mathcal F\subseteq\mathcal H(k,d,k-d+1)$.

\noindent{\bf \underline{Case 2.3.2 $\nu(G)\geq2$.}} Suppose
$$e_1=\{x_1,y_1\} \text{ and } e_2=\{x_2,y_2\}$$
are two disjoint edges in $G$.
If there would be $1\leq j_1< j_2\leq d-1$ such that $F_{j_1}=([k-1]\setminus\{j_1\})\cup e_1\in\mathcal F_2^{(j_1)}$ and $F_{j_2}=([k-1]\setminus\{j_2\})\cup e_2\in\mathcal F_2^{(j_2)}$, then for any $X_i\in\mathcal F_2^{(i)}$ with $i\in[d-1]\setminus\{j_1,j_2\}$ and any $F_3\in\mathcal F_3$, we have
$F_3\cap F_{j_1}\cap F_{j_2}\cap (\bigcap_{i\in[d-1]\setminus\{j_1,j_2\}}X_i)=\emptyset,$
a contradiction. Thus $e_1$ and $e_2$ must lie in the same graph $G_j$ for some unique $j\in[d-1]$. Without loss of generality, assume that $j=1$, and write
$$F_{1,1}=[2,k-1]\cup e_1\in\mathcal F_2^{(1)}\ \ {\rm and}\ \ F_{1,2}=[2,k-1]\cup e_2\in\mathcal F_2^{(1)}.$$

Given $h\in[2,d-1]$, let $F_h=([k-1]\setminus\{h\})\cup e\in \mathcal F_2^{(h)}$ for some $e\in E(G)$.  Since $\mathcal F$ is $d$-wise intersecting, for any $F_3\in\mathcal F_3$ and $X_i\in\mathcal F_2^{(i)}$ with $i\in[2,d-1]\setminus\{h\}$, we have
$F_3\cap F_{1,1}\cap F_h\cap(\bigcap_{i\in[2,d-1]\setminus\{h\}}X_i)=F_3\cap e_1\cap e\subseteq e_1\cap e\neq\emptyset$
and
$F_3\cap F_{1,2}\cap F_h\cap(\bigcap_{i\in[2,d-1]\setminus\{h\}}X_i)=F_3\cap e_2\cap e\subseteq e_2\cap e\neq\emptyset$. Thus $e$ must be one of the following edges:
$$e_3=\{x_1,x_2\}, \text{   } e_4=\{y_1,y_2\},\text{   } e_5=\{x_1,y_2\} \text{ and } e_6=\{x_2,y_1\}.$$
That is, $\bigcup_{i\in[2,d-1]}E(G_i)\subseteq\{e_3,e_4,e_5,e_6\}$.

When $d\geq 4$, let $2\leq j_1<j_2\leq d-1$. Let $F_{j_1}=([k-1]\setminus\{j_1\})\cup e_{h_1}\in\mathcal F_{2}^{(j_1)}$ and $F_{j_2}=([k-1]\setminus\{j_2\})\cup e_{h_2}\in\mathcal F_{2}^{(j_2)}$, where $e_{h_1}$ and $e_{h_2}$ come from $\{e_3,e_4,e_5,e_6\}$. For any $F_3\in\mathcal F_3$ and $X_i\in\mathcal F_2^{(i)}$ with $i\in[2,d-1]\setminus\{j_1,j_2\}$, if $\{e_{h_1},e_{h_2}\}\in\{\{e_3,e_4\},\{e_3,e_5\},\{e_4,e_6\}\}$, then $F_3\cap F_{1,2}\cap F_{j_1}\cap F_{j_2}\cap(\bigcap_{i\in[2,d-1]\setminus\{j_1,j_2\}}X_i)=\emptyset,$
a contradiction; if $\{e_{h_1},e_{h_2}\}\in\{\{e_4,e_5\},\{e_3,e_6\},\{e_5$, $e_6\}\}$, then $F_3\cap F_{1,1}\cap F_{j_1}\cap F_{j_2}\cap(\bigcap_{i\in[2,d-1]\setminus\{j_1,j_2\}}X_i)=\emptyset,$
a contradiction.
Therefore, when $d\geq 4$, $\bigcup_{i\in[2,d-1]}E(G_i)$ consists of only one edge, and if $|\bigcup_{i\in[2,d-1]}E(G_i)|\geq2$, then $d=3$.

If $|\bigcup_{i\in[2,d-1]}E(G_i)|=1$, without loss of generality assume that $\bigcup_{i\in[2,d-1]}E(G_i)=\{e_3\}.$ That is, for every $i\in[2,d-1]$,
$$\mathcal F_2^{(i)}=\{A_i\}\text{, where }A_i=([k-1]\setminus\{i\})\cup e_3.$$
For any $F_3\in\mathcal F_3$, since $F_3\cap F_{1,1}\cap(\bigcap_{i\in[2,d-1]}A_i)=F_3\cap\{x_1\}\neq\emptyset$ and $F_3\cap F_{1,2}\cap(\bigcap_{i\in[2,d-1]}A_i)=F_3\cap\{x_2\}\neq\emptyset$, we have $\{x_1,x_2\}\subseteq F_3$. Thus
$$\mathcal F_3=\{F\in\mathcal F:[d-1]\subseteq F,F\cap[d,k-1]=\emptyset,e_3=\{x_1,x_2\}\subseteq F\}.$$
For any $F_1\in\mathcal F_2^{(1)}$ and $F_3\in\mathcal F_3$, since $F_3\cap F_1\cap(\bigcap_{i\in[2,d-1]}A_i)=F_1\cap e_3\neq\emptyset$, by \eqref{eqn:f2-611},
$$\mathcal F_{2}^{(1)}=\{F\in\mathcal F:[2,k-1]\subseteq F, 1\not\in F, F\cap\{x_1,x_2\}\neq \emptyset\}.$$
Without loss of generality take $x_1=k$ and $x_2=k+1$. It is readily checked that $\mathcal F\subseteq\mathcal G(k,d)$.

If $|\bigcup_{i\in[2,d-1]}E(G_i)|\geq 2$, then $d=3$, and so $|E(G_2)|\geq 2$. There are four cases to consider according to the size of $E(G_2)$.

If $|E(G_2)|=2$ and the two edges in $G_2$ are disjoint, then without loss of generality assume that $E(G_2)=\{e_3,e_4\}$. This implies that
$$\mathcal F_2^{(2)}=\{F_{2,1},F_{2,2}\}\text{, where }F_{2,1}=[3,k-1]\cup\{1\}\cup e_3\text{ and }F_{2,2}=[3,k-1]\cup\{1\}\cup e_4.$$
For any $F_3\in\mathcal F_3$, since
$F_3\cap F_{1,1}\cap F_{2,1}=F_3\cap\{x_1\}\neq\emptyset$, $F_3\cap F_{1,2}\cap F_{2,1}=F_3\cap\{x_2\}\neq\emptyset,$
$F_3\cap F_{1,1}\cap F_{2,2}=F_3\cap\{y_1\}\neq\emptyset$ and $F_3\cap F_{1,2}\cap F_{2,2}=F_3\cap\{y_2\}\neq\emptyset,$
we have $\{x_1,x_2,y_1,y_2\}\subseteq F_3$ and so
$$\mathcal F_3=\{F\in\mathcal F:[2]\subseteq F,F\cap[3,k-1]=\emptyset, \{x_1,x_2,y_1,y_2\}\subseteq F\},$$
which implies $k\geq 6$. For any $F_1\in\mathcal F_2^{(1)}$ and $F_3\in\mathcal F_3$, $F_3\cap F_1\cap F_{2,1}=F_1\cap e_3\neq\emptyset$ and $F_3\cap F_1\cap F_{2,2}=F_1\cap e_4\neq\emptyset$. Since $e_3$ and $e_4$ are disjoint, $F_1$ must contain at least one of $e_1,e_2,e_5$ and $e_6$.
Therefore,
$$\mathcal F_{2}^{(1)}\subseteq \{[2,k-1]\cup e : e\in\{e_1,e_2,e_5,e_6\}\}.$$
Without loss of generality assume that $x_1=k$, $y_1=k+1$, $x_2=k+2$ and $y_2=k+3$. It is readily checked that $\mathcal F\subseteq\mathcal S_2(k,3)$. Note that only $(2)$ and $(3)$ of Theorem \ref{thm:maximal_intersecting_d+2} deal with the case of $d=3$ and $k\geq 6$.

If $|E(G_2)|=2$ and the two edges in $G_2$ have a common element, then without loss of generality assume that $E(G_2)=\{e_3,e_6\}$. This implies that
$$\mathcal F_2^{(2)}=\{F_{2,1},F_{2,2}\},\text{ where }F_{2,1}=[3,k-1]\cup\{1\}\cup e_3\text{ and }F_{2,2}=[3,k-1]\cup\{1\}\cup e_6.$$
For any $F_3\in\mathcal F_3$, since
$F_3\cap F_{1,1}\cap F_{2,1}=F_3\cap\{x_1\}\neq\emptyset,$
$F_3\cap F_{1,1}\cap F_{2,2}=F_3\cap\{y_1\}\neq\emptyset$
and
$F_3\cap F_{1,2}\cap F_{2,2}=F_3\cap\{x_2\}\neq\emptyset,$
we have $\{x_1,x_2,y_1\}\subseteq F_3$ and so
$$\mathcal F_3=\{F\in\mathcal F:[2]\subseteq F,F\cap[3,k-1]=\emptyset, \{x_1,x_2,y_1\}\subseteq F\},$$
which implies $k\geq 5$. For any $F_1\in\mathcal F_2^{(1)}$ and $F_3\in\mathcal F_3$, $F_3\cap F_1\cap F_{2,1}=F_1\cap e_3\neq\emptyset$ and $F_3\cap F_1\cap F_{2,2}=F_1\cap e_6\neq\emptyset$. Thus either $e_1=\{x_1,y_1\}\subseteq F_1$ or $x_2\in F_1$.
Therefore,
$$\mathcal F_{2}^{(1)}=\{F_{1,1}\}\cup \{F\in {\cal F}: [2,k-1]\subseteq F, 1\not\in F, x_2\in F\}.$$
Without loss of generality assume that $x_1=k+1$, $y_1=k+2$, $x_2=k$ and $y_2=k+3$. It is readily checked that $\mathcal F\subseteq\mathcal S_1(k,3)$. Note that only $(1)$, $(2)$ and $(3)$ of Theorem \ref{thm:maximal_intersecting_d+2} deal with the case of $d=3$ and $k\geq 5$.

If $|E(G_2)|=3$, then without loss of generality assume that $E(G_2)=\{e_3,e_4,e_6\}$. This implies that
$$\mathcal F_2^{(2)}=\{F_{2,1},F_{2,2},F_{2,3}\},$$
where
$$F_{2,1}=[3,k-1]\cup\{1\}\cup e_3,F_{2,2}=[3,k-1]\cup\{1\}\cup e_4\text{ and }F_{2,3}=[3,k-1]\cup\{1\}\cup e_6.$$
For any $F_3\in\mathcal F_3$, since
$F_3\cap F_{1,1}\cap F_{2,1}=\{x_1\}\cap F_3\neq\emptyset$, $F_3\cap F_{1,1}\cap F_{2,2}=\{y_1\}\cap F_3\neq\emptyset$, $F_3\cap F_{1,2}\cap F_{2,1}=\{x_2\}\cap F_3\neq\emptyset$ and $F_3\cap F_{1,2}\cap F_{2,2}=\{y_2\}\cap F_3\neq\emptyset$, we have $\{x_1,x_2,y_1,y_2\}\subseteq F_3$, and so
$$\mathcal F_3=\{F\in\mathcal F:[2]\subseteq F,F\cap[3,k-1]=\emptyset, \{x_1,x_2,y_1,y_2\}\subseteq F\},$$
which implies $k\geq 6$.
For any $F_1\in\mathcal F_2^{(1)}$ and $F_3\in\mathcal F_3$, since $F_3\cap F_1\cap F_{2,1}=F_1\cap e_3\neq\emptyset$, $F_3\cap F_1\cap F_{2,2}=F_1\cap e_4\neq\emptyset$ and $F_3\cap F_1\cap F_{2,3}=F_1\cap e_6\neq\emptyset$, $F_1$ must contain at least one of $e_1,e_2$ and $e_6$.
Therefore,
$$\mathcal F_{2}^{(1)}\subseteq \{[2,k-1]\cup e : e\in\{e_1,e_2,e_6\}\}.$$
Without loss of generality assume that $x_1=k$, $y_1=k+2$, $x_2=k+1$ and $y_2=k+3$. It is readily checked that $\mathcal F\subseteq \mathcal S_3(k,3)$. Note that only $(2)$ and $(3)$ of Theorem \ref{thm:maximal_intersecting_d+2} deal with the case of $d=3$ and $k\geq 6$.

If $|E(G_2)|=4$, then
\begin{align*}
\mathcal F_2^{(2)}=\{F_{2,1},F_{2,2},F_{2,3},F_{2,4}\},
\end{align*}
where
\begin{align*}
F_{2,1}=[3,k-1]\cup\{1\}\cup e_3, &\text{\ \ \ \ \ \ \ }F_{2,2}=[3,k-1]\cup\{1\}\cup e_4,\\
F_{2,3}=[3,k-1]\cup\{1\}\cup e_5\ &\text{\ \ \ \ \ \ \ }F_{2,4}=[3,k-1]\cup\{1\}\cup e_6.
\end{align*}
For any $F_3\in\mathcal F_3$, since
$F_3\cap F_{1,1}\cap F_{2,1}=\{x_1\}\cap F_3\neq\emptyset$, $F_3\cap F_{1,1}\cap F_{2,2}=\{y_1\}\cap F_3\neq\emptyset$, $F_3\cap F_{1,2}\cap F_{2,1}=\{x_2\}\cap F_3\neq\emptyset$ and $F_3\cap F_{1,2}\cap F_{2,2}=\{y_2\}\cap F_3\neq\emptyset$, we have
$\{x_1,x_2,y_1,y_2\}\subseteq F_3$, and so
$$\mathcal F_3=\{F\in\mathcal F:[2]\subseteq F,F\cap[3,k-1]=\emptyset, \{x_1,x_2,y_1,y_2\}\subseteq F\},$$
which implies $k\geq 6$. For any $F_1\in\mathcal F_2^{(1)}$, since $F_3\cap F_1\cap F_2\neq\emptyset$ for any $F_2\in\mathcal F_2^{(2)}$, $F_1$ must contain at least one of $e_1$ and $e_2$.
Therefore,
$$\mathcal F_{2}^{(1)}=\{F_{1,1},F_{1,2}\}=\{[2,k-1]\cup e : e\in\{e_1,e_2\}\}.$$
Without loss of generality assume that $x_1=k$, $y_1=k+2$, $x_2=k+1$ and $y_2=k+3$. Also we interchange the elements 1 and 2. Then it is readily checked that $\mathcal F\subseteq\mathcal S_2(k,3)$. Note that only $(2)$ and $(3)$ of Theorem \ref{thm:maximal_intersecting_d+2} deal with the case of $d=3$ and $k\geq 6$.

\noindent{\bf \underline{Case 2.3.3 $\nu(G)=1$ and {\bf$\tau(G)\geq 2$.}}} In this case $G$ must be a triangle and $\tau(G)=2$. Without loss of generality assume that $E(G)=\{e_1,e_2,e_3\}$ where
$$e_1=\{k,k+1\},\text{   }e_2=\{k,k+2\},\text{   }e_3=\{k+1,k+2\},$$
and assume that $\mathcal F_{2}^{(1)}$ has the largest size among all $\mathcal F_{2}^{(i)}$ with $i\in[d-1]$, that is, $|\mathcal F_{2}^{(1)}|=\max\{|\mathcal F_{2}^{(i)}|:i\in[d-1]\}$. Clearly $1\leq |\mathcal F_{2}^{(1)}|\leq 3$.

\noindent{\bf \underline{Case 2.3.3.1 $|\mathcal F_{2}^{(1)}|=3$.}} In the case,
$$\mathcal F_{2}^{(1)}=\{F_{1,1},F_{1,2},F_{1,3}\},$$
where
$$F_{1,1}=[2,k-1]\cup e_1,F_{1,2}=[2,k-1]\cup e_2\text{ and }F_{1,3}=[2,k-1]\cup e_3.$$
For any $F_3\in\mathcal F_3$, it follows from Lemma \ref{lem:inter_size} that $F_3\cap e_i\neq\emptyset$ for each $1\leq i\leq 3$, and so $|F_3\cap[k,k+2]|\geq2$.

If there exists $F_3\in\mathcal F_3$ such that $|F_3\cap[k,k+2]|=2$, then without loss of generality, assume that $F_3\cap[k,k+2]=e_1$.
If there would be $j\in[2,d-1]$ such that $F_2^j=([k-1]\setminus\{j\})\cup e_2\in\mathcal F_2^{(j)}$ (resp. $F_2^j=([k-1]\setminus\{j\})\cup e_3\in\mathcal F_2^{(j)}$), then for any $X_i\in\mathcal F_2^{(i)}$ with  $i\in[2,d-1]\setminus\{j\}$, we have $F_3\cap F_{1,3}\cap F_2^j\cap(\bigcap_{i\in[2,d-1]\setminus\{j\}}X_i)=\emptyset$ (resp. $F_3\cap F_{1,2}\cap F_2^j\cap(\bigcap_{i\in[2,d-1]\setminus\{j\}}X_i)=\emptyset$), a contradiction. Thus $\bigcup_{i\in[2,d-1]}E(G_i)$ only contains one edge $e_1$. That is, for every $i\in[2,d-1]$,
$$\mathcal F_2^{(i)}=\{([k-1]\setminus\{i\})\cup e_1\}.$$
The above procedure implies that for any $F'_3\in\mathcal F_3$, we have $F'_3\cap[k,k+2]\neq e_2$ and $F'_3\cap[k,k+2]\neq e_3$. Since $|F'_3\cap[k,k+2]|\geq2$ for any $F'_3\in\mathcal F_3$, we have $e_1\subseteq F'_3$ for any $F'_3\in\mathcal F_3$, and so
$$\mathcal F_3=\{F\in\mathcal F:[d-1]\subseteq F,F\cap[d,k-1]=\emptyset,e_1=\{k,k+1\}\subseteq F\}.$$
It is readily checked that $\mathcal F\subseteq\mathcal G(k,d)$.

If for any $F_3\in\mathcal F_3$, $|F_3\cap[k,k+2]|=3$, i.e., $[k,k+2]\subseteq F_3$, then
$$\mathcal F_3=\{F\in\mathcal F:[d-1]\subseteq F,F\cap[d,k-1]=\emptyset,[k,k+2]\subseteq F\}.$$
When $d\geq4$, if there would be $2\leq j_1<j_2\leq d-1$ such that $F_{2}^{j_1}=([k-1]\setminus\{j_1\})\cup e_l\in\mathcal F_2^{(j_1)}$ and $F_{2}^{j_2}=([k-1]\setminus\{j_2\})\cup e_{l'}\in\mathcal F_2^{(j_2)}$ where $l,l'\in\{1,2,3\}$ and $l\neq l'$, then for $e\in E(G)\setminus\{e_l,e_{l'}\}$ and $X_i\in\mathcal F_2^{(i)}$ with $i\in[2,d-1]\setminus\{j_1,j_2\}$, we have
$F_3\cap ([2,k-1]\cup e)\cap F_{2}^{j_1}\cap F_{2}^{j_2}\cap(\bigcap_{i\in[2,d-1]\setminus\{j_1,j_2\}}X_i)=\emptyset,$
a contradiction. Therefore, when $d\geq 4$, $\bigcup_{i\in[2,d-1]}E(G_i)$ consists of only one edge, and if
$|\bigcup_{i\in[2,d-1]}E(G_i)|\geq2$, then $d=3$.
If $|\bigcup_{i\in[2,d-1]}E(G_i)|=1$, then without loss of generality assume that $\bigcup_{i\in[2,d-1]}E(G_i)=\{e_1\}.$ Thus for every $i\in[2,d-1]$,
$$\mathcal F_2^{(i)}=\{([k-1]\setminus\{i\})\cup e_1\}.$$
It is readily checked that $\mathcal F\subseteq\mathcal G(k,d)$. If $|\bigcup_{i\in[2,d-1]}E(G_i)|\geq2$, then $d=3$. If $|E(G_2)|=2$, then without loss of generality assume that $E(G_2)=\{e_1,e_2\}$, and so
$$\mathcal F_2^{(2)}=\{[3,k-1]\cup\{1\}\cup e_1, [3,k-1]\cup\{1\}\cup e_2\}.$$
It is readily checked that $\mathcal F\subseteq\mathcal S_1(k,3)$. If $|E(G_2)|=3$, then
$$\mathcal F_2^{(2)}=\{[3,k-1]\cup\{1\}\cup e_1, [3,k-1]\cup\{1\}\cup e_2, [3,k-1]\cup\{1\}\cup e_3\}.$$
It is readily checked that $\mathcal F\subseteq\mathcal S(k,3)$.

\noindent{\bf \underline{Case 2.3.3.2 $|\mathcal F_{2}^{(1)}|=2$.}} Without loss of generality, assume that $E(G_1)=\{e_1,e_2\}$ and
$$\mathcal F_{2}^{(1)}=\{F_{1,1}, F_{1,2}\},\text{ where }F_{1,1}=[2,k-1]\cup e_1\text{ and } F_{1,2}=[2,k-1]\cup e_2.$$
There exists $j_1\in[2,d-1]$ such that $e_3\in E(G_{j_1})$. Let $F_{j_1}=([d-1]\setminus\{j_1\})\cup e_3\in\mathcal F_2^{(j_1)}$.
When $d\geq4$, if there would be $j_2\in[2,d-1]\setminus\{j_1\}$ such that $F_{j_2}=([k-1]\setminus\{j_2\})\cup e\in\mathcal F_2^{(j_2)}$ where $e\in E(G_1)=\{e_1,e_2\}$, then for $e'\in E(G_1)\setminus\{e\}$, any $F_3\in\mathcal F_3$ and any $X_i\in\mathcal F_2^{(i)}$ with $i\in[2,d-1]\setminus\{j_1,j_2\}$, we have $F_3\cap ([2,k-1]\cup e')\cap F_{j_1}\cap F_{j_2}\cap(\bigcap_{i\in[2,d-1]\setminus\{j_1,j_2\}}X_i)=\emptyset,$
a contradiction.
Hence $\bigcup_{i\in[2,d-1]\setminus\{j_1\}}E(G_i)$ contains only one edge $e_3$.
Furthermore, if there would be $F'_{j_1}=[2,k-1]\cup e\in\mathcal F_2^{(j_1)}$ where $e\in E(G_1)$, then for $e'\in E(G_1)\setminus\{e\}$, any $F_3\in\mathcal F_3$ and any $X_i\in\mathcal F_2^{(i)}$ with $i\in[2,d-1]\setminus\{j_1\}$, we have
$F_3\cap ([2,k-1]\cup e')\cap F'_{j_1}\cap(\bigcap_{i\in[2,d-1]\setminus\{j_1\}}X_i)=\emptyset,$
a contradiction again. Therefore, $E(G_{j_1})$ only contains $e_3$ as well. On the whole, when $d\geq4$, $\bigcup_{i\in[2,d-1]}E(G_i)=\{e_3\}$, and if $|\bigcup_{i\in[2,d-1]}E(G_i)|\geq 2$, then $d=3$.

If $\bigcup_{i\in[2,d-1]}E(G_i)=\{e_3\}$, then for every $i\in[2,d-1]$,
$$\mathcal F_2^{(i)}=\{A_i=([k-1]\setminus\{i\})\cup e_3\}.$$
For any $F_3\in\mathcal F_3$, since $F_3\cap F_{1,1}\cap(\bigcap_{i\in[2,d-1]}A_i)=F_3\cap\{k+1\}\neq\emptyset$
and
$F_3\cap F_{1,2}\cap(\bigcap_{i\in[2,d-1]}A_i)=F_3\cap\{k+2\}\neq\emptyset$, we have $e_3=\{k+1,k+2\}\subseteq F_3$. Thus
$$\mathcal F_3=\{F\in\mathcal F:[d-1]\subseteq F,F\cap[d,k-1]=\emptyset,\{k+1,k+2\}\subseteq F\}.$$
After interchanging the elements $k$ and $k+2$, one can check that $\mathcal F\subseteq\mathcal G(k,d)$.

If $|\bigcup_{i\in[2,d-1]}E(G_i)|\geq 2$, then $d=3$, and so $|E(G_2)|\geq 2$. On the other hand, since $2=|\mathcal F_{2}^{(1)}|=\max\{|\mathcal F_{2}^{(i)}|:i\in[2]\}$, we have $|\mathcal F_{2}^{(2)}|\leq 2$, i.e., $|E(G_2)|\leq 2$. Thus $|E(G_2)|=2$. Since $E(G_1)=\{e_1,e_2\}$, $G_2$ must contain the edge $e_3$. Without loss of generality, assume that $E(G_2)=\{e_2,e_3\}$. Then
$$\mathcal F_{2}^{(2)}=\{F_{2,1}, F_{2,2}\},\text{ where }F_{2,1}=([k-1]\setminus\{2\})\cup e_2\text{ and }F_{2,2}=([k-1]\setminus\{2\})\cup e_3.$$
For any $F_3\in\mathcal F_3$, since $F_3\cap F_{1,1}\cap F_{2,1}=F_3\cap\{k\}\neq\emptyset$, $F_3\cap F_{1,1}\cap F_{2,2}=F_3\cap\{k+1\}\neq\emptyset$ and $F_3\cap F_{1,2}\cap F_{2,2}=F_3\cap\{k+2\}\neq\emptyset$, we have $[k,k+2]\subseteq F_3$. Therefore,
$$\mathcal F_3=\{F\in\mathcal F:[2]\subseteq F,F\cap[d,k-1]=\emptyset,[k,k+2]\subseteq F\}.$$
After interchanging the elements $k$ and $k+2$, one can check that $\mathcal F\subseteq \mathcal S_1(k,3)$.

\noindent{\bf \underline{Case 2.3.3.3 $|\mathcal F_{2}^{(1)}|=1$.}} Since $|\mathcal F_{2}^{(1)}|=\max\{|\mathcal F_{2}^{(i)}|:i\in[d-1]\}$, $|\mathcal F_{2}^{(i)}|=1$ for every $i\in[d-1]$. Let $\mathcal F_{2}^{(i)}=\{F_{2}^{(i)}\}$. For any $F_3\in\mathcal F_3$, $F_3\cap(\bigcap_{i\in[d-1]}F_{2}^{(i)})=e_1\cap e_2\cap e_3=\emptyset$, a contradiction.
\end{proof}

\section{Proof of Theorem \ref{thm:maximal_intersecting_d+1}}\label{sec:3}

For $\mathcal G\subseteq\binom{[n]}{k}$ and $X\subseteq[n]$ with $|X|<k$, define $d_{\mathcal G}(X)=|\{G\in\mathcal G:X\subseteq G\}|$.

\begin{proof}[{\bf Proof of Theorem \ref{thm:maximal_intersecting_d+1}:}]
Suppose that $\mathcal F\subseteq \binom{[n]}{d+1}$ is a maximal non-trivial $d$-wise intersecting family and $|\mathcal F|>3d(d+1)$. We will show that $\mathcal F$ must be one of $\mathcal H(d+1,d,2)$ and $\mathcal H(d+1,d,3)$ up to isomorphism.
Note that by \eqref{equ:H(k,d,l)}, for any $n>(d+1)^2$, $|\mathcal H(d+1,d,2)|=(d+1)n-d^2-2d>3d(d+1)$ and $|\mathcal H(d+1,d,3)|=3n-2d-4>3d(d+1)$.

Without loss of generality assume that $d_{\mathcal F}([d-1])=\max_{X\in\binom{[n]}{d-1}}d_{\mathcal F}(X)$.
Let $$\mathcal F_1=\{F\in\mathcal F:[d-1]\subseteq F\}.$$
By the non-triviality of $\mathcal F$, $\mathcal F\setminus\mathcal F_1\neq\emptyset$.
If there would be $F\in \mathcal F\setminus\mathcal F_1$ with $|F\cap[d-1]|<d-3$, then for every $F_1\in\mathcal F_1$, $|F\cap F_1|\leq|F\cap[d-1]|+|F_1\setminus[d-1]|<d-1$, a contradiction.
Define
$$\mathcal F_2=\{F\in\mathcal F:|F\cap[d-1]|=d-2\}$$
and
$$\mathcal F_3=\{F\in\mathcal F:|F\cap[d-1]|=d-3\}.$$
Then $\mathcal F=\mathcal F_1\cup\mathcal F_2\cup\mathcal F_3$, and $\mathcal F_1,\mathcal F_2$ and $\mathcal F_3$ are pairwise disjoint.

\noindent\underline{{\bf Case 1 $\mathcal F_3\neq\emptyset$.}}
Assume that $[3,d+3]\in\mathcal F_3$  by renaming the elements in $[n]$.
It follows from Lemma \ref{lem:inter_size} that for any $F_1\in\mathcal F_1$, $|F_1\cap[3,d+3]|=d-3+|F_1\cap[d,d+3]|\geq d-1$, and so $|F_1\cap[d,d+3]|\geq2$.
Furthermore, since $|F_1|=d+1$ and $[d-1]\subseteq F_1$, we have $|F_1\cap[d,d+3]|=2$, and hence $\mathcal F_1\subseteq\{[d-1]\cup Y:|Y\cap[d,d+3]|=2\}$.
This implies that $$d_{\mathcal F}([d-1])=|\mathcal F_1|\leq 6.$$
Let $L=[d-1]\cup Y\in\mathcal F_1$.
For any $F_2\in\mathcal F_2$, by Lemma \ref{lem:inter_size}, $|F_2\cap L|=d-2+|F_2\cap Y|\geq d-1$,
and so $|F_2\cap Y|\geq 1$.
By the assumption of $d_{\mathcal F}([d-1])=\max_{X\in\binom{[n]}{d-1}}d_{\mathcal F}(X)$,
$$|\mathcal F_2|\leq \sum_{W\in\binom{[d-1]}{d-2}}\sum_{u\in Y}d_{\mathcal F}(W\cup\{u\})\leq 2(d-1)d_{\mathcal F}([d-1])\leq12(d-1).$$
For any $F_3\in\mathcal F_3$, $|F_3\cap L|=d-3+|F_3\cap Y|\geq d-1$ by Lemma \ref{lem:inter_size}, and so $F_3$ must contain $Y$ for every $F_3\in\mathcal F_3$.
By the assumption of $d_{\mathcal F}([d-1])=\max_{X\in\binom{[n]}{d-1}}d_{\mathcal F}(X)$,
$$|\mathcal F_3|\leq\sum_{W\in\binom{[d-1]}{d-3}}d_{\mathcal F}(W\cup Y)\leq\binom{d-1}{2}d_{\mathcal F}([d-1])\leq6\binom{d-1}{2}.$$
Therefore, $|\mathcal F|=|\mathcal F_1|+|\mathcal F_2|+|\mathcal F_3|\leq 3d(d+1)$, a contradiction.

\noindent\underline{{\bf Case 2 $\mathcal F_3=\emptyset$.}} In this case $\mathcal F_2\neq\emptyset$.
We construct a hypergraph $H=(V(H),E(H))$, where $V(H)=[d,n]$ and
$$E(H)=\left\{\varepsilon\in\binom{[d,n]}{3}:\varepsilon\subseteq F\in\mathcal F_2\right\}.$$
For each $i\in[d-1]$, let
$$\mathcal F_{2}^{(i)}=\{F\in\mathcal F_2:[d-1]\setminus\{i\}\subseteq F,i\notin F\}$$
and $H_i=(V(H),E(H_i))$, where
$$E(H_i)=\{\varepsilon\in E(H):\varepsilon=F\setminus[d-1], F\in\mathcal F_2^{(i)}\}.$$
Then $\mathcal F_2=\bigcup_{i\in[d-1]}\mathcal F_{2}^{(i)}$. Note that $\mathcal F_2^{(i)}\neq\emptyset$ for any $i\in[d-1]$ because of the non-triviality of $\mathcal F$. There are three cases to consider according to the size of $E(H)$. Before proceeding to the three cases, we give the following simple but useful claim.
\begin{Claim}\label{cla:inter_with_hypergarph}
For any $F_1\in\mathcal F_1$ and any $\varepsilon\in E(H)$, $F_1\cap \varepsilon\neq\emptyset$.
\end{Claim}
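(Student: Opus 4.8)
The plan is to derive the claim directly from the $(d-1)$-intersecting property of $\mathcal F$ guaranteed by Lemma~\ref{lem:inter_size}, via a simple cardinality split of a single intersection. Fix $F_1\in\mathcal F_1$ and $\varepsilon\in E(H)$. First I would record the two structural facts forced by the uniformity $|F|=d+1$: since $F_1\in\mathcal F_1$ we may write $F_1=[d-1]\cup Y$ with $Y=F_1\setminus[d-1]\subseteq[d,n]$ and $|Y|=2$; and since $\varepsilon\in E(H)$ there is some $F\in\mathcal F_2$ with $\varepsilon\subseteq F$, where $|F\cap[d-1]|=d-2$ forces $F\cap[d,n]=F\setminus[d-1]$ to have exactly $3$ elements, so in fact $\varepsilon=F\setminus[d-1]$ and $F\cap[d-1]=[d-1]\setminus\{i\}$ for a unique $i\in[d-1]$.

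Next I would compute $F_1\cap F$ by splitting it according to whether elements lie in $[d-1]$ or in $[d,n]$. The part inside $[d-1]$ is $[d-1]\cap F=[d-1]\setminus\{i\}$, of size $d-2$ (using $[d-1]\subseteq F_1$), while the part inside $[d,n]$ is $Y\cap\varepsilon=F_1\cap\varepsilon$ (using $Y\subseteq[d,n]$ and $\varepsilon\subseteq[d,n]$). Since these two parts are disjoint, their sizes add exactly, giving $|F_1\cap F|=(d-2)+|F_1\cap\varepsilon|$. Applying Lemma~\ref{lem:inter_size} with $m=2$ to the two members $F_1,F\in\mathcal F$ yields $|F_1\cap F|\geq d-1$, hence $|F_1\cap\varepsilon|\geq1$ and $F_1\cap\varepsilon\neq\emptyset$, as required.

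There is essentially no obstacle here; the only points that need care are the identification $\varepsilon=F\setminus[d-1]$, which relies on the $(d+1)$-uniformity together with $|F\cap[d-1]|=d-2$, and the clean disjointness of the two parts of the intersection. The claim is thus purely a bookkeeping consequence of $\mathcal F$ being $(d-1)$-intersecting, and I expect it to be invoked repeatedly in the subsequent subcases to force each $F_1\in\mathcal F_1$ to meet every hyperedge of $H$.
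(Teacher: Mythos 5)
Your proof is correct and follows essentially the same route as the paper: both identify $\varepsilon=F\setminus[d-1]$ for some $F\in\mathcal F_2$ containing $\varepsilon$, split $|F_1\cap F|$ as $(d-2)+|F_1\cap\varepsilon|$, and invoke Lemma~\ref{lem:inter_size} to conclude $|F_1\cap\varepsilon|\geq 1$. The only difference is that you spell out the bookkeeping ($\varepsilon\subseteq[d,n]$, the uniformity forcing $|F\setminus[d-1]|=3$) that the paper leaves implicit.
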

\begin{proof}[{\bf Proof of Claim}]
For every $\varepsilon\in E(H)$, there exists $F_2\in\mathcal F_2$ such that $\varepsilon\subseteq F_2$.
For each $F_1\in\mathcal F_1$, since $|F_1\cap F_2|=d-2+|F_1\cap \varepsilon|\geq d-1$ by Lemma \ref{lem:inter_size}, $|F_1\cap \varepsilon|\geq 1$.
\end{proof}

\noindent\underline{{\bf Case 2.1 $|E(H)|=1$.}}
Without loss of generality assume that $E(H)=\{[d,d+2]\}$. Then $\mathcal F_2^{(i)}=\{X_i\}$, where $X_i=[d+2]\setminus\{i\}$ for each $i\in[d-1]$, and so
$$\mathcal F_2=\{X_i:i\in[d-1]\}.$$
For any $F_1\in\mathcal F_1$, by Claim \ref{cla:inter_with_hypergarph}, $F_1\cap[d,d+2]\neq\emptyset$.
Thus
$$\mathcal F_1=\{F\in\mathcal F:[d-1]\subseteq F,F\cap[d,d+2]\neq\emptyset\}.$$
It is readily checked that $\mathcal F\subseteq\mathcal H(d+1,d,3)$.

\noindent\underline{{\bf Case 2.2 $|E(H)|=2$.}}
Let $E(H)=\{\varepsilon_1,\varepsilon_2\}$. Then there exist $1\leq j_1<j_2\leq d-1$ such that $F_{j_1}=([d-1]\setminus\{j_1\})\cup \varepsilon_1\in\mathcal F_2^{(j_1)}$ and $F_{j_2}=([d-1]\setminus\{j_2\})\cup \varepsilon_2\in\mathcal F_2^{(j_2)}$.
By Lemma \ref{lem:inter_size}, $|F_{j_1}\cap F_{j_2}|=d-3+|\varepsilon_1\cap \varepsilon_2|\geq d-1$.
Since $|\varepsilon_1|=|\varepsilon_2|=3$ and $\varepsilon_1\neq \varepsilon_2$, $|\varepsilon_1\cap \varepsilon_2|=2$. Without loss of generality, assume that $\varepsilon_1=\{d,d+1,d+2\}$ and $\varepsilon_2=\{d,d+1,d+3\}$.
Then $$\mathcal F_2=\{F\in\mathcal F:|F\cap[d-1]|=d-2,\{d,d+1\}\subseteq F\}.$$
Take $X_i\in\mathcal F_2^{(i)}$ for $i\in[d-1]\setminus\{j_1,j_2\}$.
For any $F_1\in\mathcal F_1$, we have $F_1\cap F_{j_1}\cap F_{j_2}\cap(\bigcap_{i\in[d-1]\setminus\{j_1,j_2\}}X_i)=F_1\cap\{d,d+1\}\neq\emptyset$.
So $$\mathcal F_1=\{F\in\mathcal F:[d-1]\subseteq F,F\cap\{d,d+1\}\neq\emptyset\}.$$
It is readily checked that $\mathcal F\subseteq\mathcal H(d+1,d,2)$.

\noindent\underline{{\bf Case 2.3 $|E(H)|\geq3$.}}
It follows from Claim \ref{cla:inter_with_hypergarph} that for every $F_1\in\mathcal F_1$, $F_1\setminus[d-1]$ is a cover of $H$, and so $\tau(H)\leq|F_1\setminus[d-1]|=2$.

\noindent\underline{{\bf Case 2.3.1 $\tau(H)=1$.}}
Without loss of generality, assume that $\{d\}$ is a cover of $H$.
Define an auxiliary graph $G=(V(G),E(G))$, where $V(G)=[d+1,n]$ and
$$E(G)=\{e\in\binom{[d+1,n]}{2}:e\cup\{d\}\in E(H)\}.$$
Let $G_i=(V(G),E(G_i))$, where
$$E(G_i)=\{e\in E(G):e\cup\{d\}\in E(H_i)\}.$$
Let $\overline{\mathcal F_1}=\{F\in\mathcal F:d\notin F\}.$
By the non-triviality of $\mathcal F$, $\mathcal F$ contains a set $F$ with $d\notin F$, and so $\overline{\mathcal F_1}\neq\emptyset$.
Since $d\in F_2$ for every $F_2\in\mathcal F_2$, we have
$\overline{\mathcal F_1}\subseteq\mathcal F_1.$
Thus
$$\overline{\mathcal F_1}=\{F\in\mathcal F:[d-1]\subseteq F, d\notin F\}\text{\ \ and\ \ }
\mathcal F_1\setminus\overline{\mathcal F_1}=\{F\in\mathcal F:[d]\subseteq F\}.$$

\noindent\underline{{\bf Case 2.3.1.1 $\tau(G)=1$.}}
Without loss of generality assume that every edge of $G$ contains $d+1$.
Then each hyperedge in $H$ contains $\{d,d+1\}$, and so
$$\mathcal F_2=\{F\in\mathcal F:|F\cap[d-1]|=d-2,\{d,d+1\}\subseteq F\}.$$
Let $\varepsilon_i=\{d,d+1,w_i\}\in E(H)$ for $i\in[3]$, where $w_1,w_2,w_3\in[d+2,n]$ are pairwise distinct.
It follows from Claim \ref{cla:inter_with_hypergarph} that for any $F_1\in\overline{\mathcal F_1}$ and $i\in[3]$, $F_1\cap\varepsilon_i\neq\emptyset$.
Since $d\notin F_1$ and $|F_1|=d+1$, $F_1$ must contain $d+1$.
Thus $$\mathcal F_1=\{F\in\mathcal F:[d-1]\subseteq F, F\cap\{d,d+1\}\neq\emptyset\}.$$
Then $\mathcal F=\mathcal F_1\cup\mathcal F_2\subseteq\mathcal H(d+1,d,2)$.

\noindent\underline{{\bf Case 2.3.1.2 $\tau(G)\geq 2$} and {\bf $\nu(G)=1$.}}
In this case $G$ must be a triangle and $\tau(G)=2$.
Assume, without loss of generality, that $E(G)=\{e_1,e_2,e_3\}$, where
$$e_1=\{d+1,d+2\}, e_2=\{d+1,d+3\}\text{ and }e_3=\{d+2,d+3\}.$$
For any $F_1\in\overline{\mathcal F_1}$ and $i\in[3]$, by Claim \ref{cla:inter_with_hypergarph} together with $d\notin F_1$, $F_1\cap e_i\neq\emptyset$.
Thus for every $F_1\in\overline{\mathcal F_1}$, $F_1$ must contain at least one of $e_1,e_2$ and $e_3$.
Therefore,
$$\overline{\mathcal F_1}\subseteq\{[d-1]\cup e:e\in\{e_1,e_2,e_3\}\}.$$
Without loss of generality, assume that $\overline{F}=[d-1]\cup e_1\in\overline{\mathcal F_1}$.
If there would be $1\leq j_1< j_2\leq d-1$ such that $F_{j_1}=([d]\setminus\{j_1\})\cup e_2\in\mathcal F_2^{(j_1)}$ and $F_{j_2}=([d]\setminus\{j_2\})\cup e_3\in\mathcal F_2^{(j_2)}$, then
$\overline{F}\cap F_{j_1}\cap F_{j_2}\cap(\bigcap_{i\in[d-1]\setminus\{j_1,j_2\}}X_i)=\emptyset,$
where $X_i\in\mathcal F_2^{(i)}$ for $i\in[d-1]\setminus\{j_1,j_2\}$, a contradiction.
Thus $e_2$ and $e_3$ must lie in the same graph $G_j$ for some unique $j\in[d-1]$,
and hence for every $i\in[d-1]\setminus\{j\}$,
$$\mathcal F_{2}^{(i)}=\{A_i\},\text{ where } A_i=[d+2]\setminus\{i\}.$$
Write
$F_{j,1}=([d]\setminus\{j\})\cup e_2\in\mathcal F_2^{(j)}\text{ and }F_{j,2}=([d]\setminus\{j\})\cup e_3\in \mathcal F_2^{(j)}.$
If there would be $L=[d-1]\cup e_2\in\overline{\mathcal F_1}$ (resp. $L=[d-1]\cup e_3\in\overline{\mathcal F_1}$), then $L\cap F_j\cap(\bigcap_{i\in[d-1]\setminus\{j\}}A_i)=\emptyset$, where $F_j=F_{j,2}$ (resp. $F_j=F_{j,1}$), a contradiction. Thus $$\overline{\mathcal F_1}=\{[d-1]\cup e_1\}.$$
For any $L\in\mathcal F_2^{(j)}$, $\overline{F}\cap L\cap(\bigcap_{i\in[d-1]\setminus\{j\}}A_i)=L\cap e_1\neq\emptyset$, and so
$$\mathcal F_2^{(j)}=\{F\in\mathcal F:[d]\setminus\{j\}\subseteq F,j\notin F, F\cap e_1\neq\emptyset\}.$$
Without loss of generality assume that $j=1$.
Then $\mathcal F\subseteq\mathcal G(d+1,d)\cong\mathcal H(d+1,d,3)$.

\noindent\underline{{\bf Case 2.3.1.3 $\nu(G)\geq2$.}}
Without loss of generality, let $$e_1=\{d+1,d+3\}\text{ and }e_2=\{d+2,d+4\}$$ be two disjoint edges of $G$.
If there would be $1\leq j_1< j_2\leq d-1$ such that $F_{j_1}=([d]\setminus\{j_1\})\cup e_1\in\mathcal F_2^{(j_1)}$ and $F_{j_2}=([d]\setminus\{j_2\})\cup e_2\in\mathcal F_2^{(j_2)}$, then $|F_{j_1}\cap F_{j_2}|=d-2$.
Thus $e_1$ and $e_2$ must lie in the same graph $G_j$ for some unique $j\in[d-1]$. Write $$F_{j,1}=([d]\setminus\{j\})\cup e_1\in\mathcal F_2^{(j)}\text{ and }F_{j,2}=([d]\setminus\{j\})\cup e_2\in\mathcal F_2^{(j)}.$$
For any $F_1\in\overline{\mathcal F_1}$, we know $d\notin F_1$. Then by Claim \ref{cla:inter_with_hypergarph}, $F_1\cap e_1\neq\emptyset$ and $F_1\cap e_2\neq\emptyset$, and hence $F_1$ must contain at least one of the following edges:
$$e_3=\{d+1,d+2\},e_4=\{d+3,d+4\},e_5=\{d+2,d+3\}\text{ and }e_6=\{d+1,d+4\}.$$
Thus
$$\overline{\mathcal F_1}\subseteq\{[d-1]\cup e:e\in\{e_3,e_4,e_5,e_6\}\}.$$
Suppose that $\overline{F}=[d-1]\cup e_3\in\overline{\mathcal F_1}$ without loss of generality.
Given $h\in[d-1]\setminus\{j\}$, let $L_h=([d]\setminus\{h\})\cup e\in\mathcal F_2^{(h)}$ for some $e\in E(G)$.
Since $\mathcal F$ is $d$-wise intersecting, for any $ X_i\in\mathcal F_2^{(i)}$ with $i\in[d-1]\setminus\{j,h\}$, we have
$\overline{F}\cap F_{j,1}\cap L_h\cap(\bigcap_{i\in[d-1]\setminus\{j,h\}}X_i)=e\cap\{d+1\}\neq\emptyset$
and
$\overline{F}\cap F_{j,2}\cap L_h\cap(\bigcap_{i\in[d-1]\setminus\{j,h\}}X_i)=e\cap\{d+2\}\neq\emptyset$.
Therefore, for any $h\in[d-1]\setminus\{j\}$,
$$\mathcal F_{2}^{(h)}=\{A_h\},\text{ where }A_h=[d+2]\setminus\{h\}.$$
If there would be $L=[d-1]\cup e_4\in\overline{\mathcal F_1}$, then $L\cap F_{j,1}\cap(\bigcap_{i\in[d-1]\setminus\{j\}}A_i)=\emptyset$, a contradiction.
If there would be $L=[d-1]\cup e_5\in\overline{\mathcal F_1}$ (resp. $L=[d-1]\cup e_6\in\overline{\mathcal F_1}$), then $L\cap F_{j}\cap(\bigcap_{i\in[d-1]\setminus\{j\}}A_i)=\emptyset$, where $F_j=F_{j,1}$ (resp. $F_j=F_{j,2}$).
This yields a contradiction again.
Thus $$\overline{\mathcal F_1}=\{[d-1]\cup e_3\}.$$
For each $F'\in\mathcal F_2^{(j)}$, $\overline{F}\cap F'\cap(\bigcap_{h\in[d-1]\setminus\{j\}}A_h)=F'\cap e_3\neq\emptyset$.
Thus $$\mathcal F_2^{(j)}=\{F\in\mathcal F:[d]\setminus\{j\}\subseteq F,j\notin F, F\cap e_3\neq\emptyset\}.$$
Without loss of generality assume that $j=1$.
It is readily checked that $\mathcal F\subseteq\mathcal G(d+1,d)\cong\mathcal H(d+1,d,3)$.

\noindent\underline{{\bf Case 2.3.2 $\tau(H)=2$.}} Suppose without loss of generality that $$\overline{F}=[d+1]\in\mathcal F_1.$$
By Claim \ref{cla:inter_with_hypergarph}, $\{d,d+1\}$ is a cover of the hypergraph $H$.
Since $\tau(H)=2$, $H$ contains two hyperedges $\varepsilon_1,\varepsilon_2$ with the property that $d\in\varepsilon_1,d+1\notin\varepsilon_1$ and $d\notin\varepsilon_2,d+1\in\varepsilon_2$.

If there would be $1\leq j_1<j_2\leq d-1$ such that $F_{j_1}=([d-1]\setminus\{j_1\})\cup\varepsilon_1\in\mathcal F_2$ and $F_{j_2}=([d-1]\setminus\{j_2\})\cup \varepsilon_2\in\mathcal F_2$, then $\overline{F}\cap F_{j_1}\cap F_{j_2}\cap(\bigcap_{i\in[d-1]\setminus\{j_1,j_2\}}X_i)=\emptyset$, where $X_i\in\mathcal F_2^{(i)}$ for $i\in[d-1]\setminus\{j_1,j_2\}$, a contradiction.
Hence $\varepsilon_1,\varepsilon_2$ must lie in the same hypergraph $H_j$ for some unique $j\in[d-1]$.
Without loss of generality take $j=1$, and write $$F_{1,1}=[2,d-1]\cup\varepsilon_1\in\mathcal F_2^{(1)}\text{ and }F_{1,2}=[2,d-1]\cup\varepsilon_2\in \mathcal F_2^{(1)}.$$

Given $s\in[2,d-1]$, if there would exist $L\in\mathcal F_2^{(s)}$ such that $d\notin L$ (resp. $d+1\notin L$), then for $X_i\in\mathcal F_2^{(i)}$ with $i\in[2,d-1]\setminus\{s\}$ and $F_1=F_{1,1}$ (resp. $F_1=F_{1,2}$), $\overline{F}\cap L\cap F_{1}\cap(\bigcap_{i\in[2,d-1]\setminus\{s\}}X_i)=\emptyset$, a contradiction.
Thus for any $A_s\in\mathcal F_2^{(s)}$, $\{d,d+1\}\subseteq A_s$, and by Lemma \ref{lem:inter_size},  $|A_s\cap\varepsilon_1|=|A_s\cap F_{1,1}|-(d-3)\geq 2$ and $|A_s\cap\varepsilon_2|=|A_s\cap F_{1,2}|-(d-3)\geq 2.$
Therefore, for any $s\in[2,d-1]$,
\begin{align}\label{eqn:6-24}
\mathcal F_{2}^{(s)}=\{F\in\mathcal F:([d+1]\setminus\{s\})\subseteq F, s\notin F, |F\cap\varepsilon_1|\geq2, |F\cap\varepsilon_2|\geq2\}.
\end{align}

By Lemma \ref{lem:inter_size}, $|F_{1,1}\cap F_{1,2}|=d-2+|\varepsilon_1\cap\varepsilon_2|\geq d-1$, and so $|\varepsilon_1\cap\varepsilon_2|\geq1$.

\noindent\underline{{\bf Case 2.3.2.1 $|\varepsilon_1\cap\varepsilon_2|=1$.}}
Without loss of generality assume that $\varepsilon_1\cap\varepsilon_2=\{d+2\}$.
For every $s\in[2,d-1]$,
$$\mathcal F_2^{(s)}=\{A_s\},\text{ where }A_s=[d+2]\setminus\{s\},$$
which yields $E(H_s)=\{[d,d+2]\}$ for each $s\in[2,d-1]$. By Claim \ref{cla:inter_with_hypergarph}, for any $F_1\in\mathcal F_1$, $|F_1\cap[d,d+2]|\geq 1$.
If there would be $L\in\mathcal F_1$ such that $|L\cap[d,d+2]|=1$, then $L=[d-1]\cup\{x,y\}$, where $x\in[d,d+2]$ and $y\notin[d,d+2]$.
By Claim \ref{cla:inter_with_hypergarph}, $\{x,y\}$ would be a cover of $H$, and so there were $\varepsilon\in E(H)$ such that $y\in\varepsilon$ and $x\notin\varepsilon$. Thus $\varepsilon$ would lie in $H_1$. Let $F_{1,3}=[2,d-1]\cup\varepsilon\in\mathcal F_2^{(1)}$.
A contradiction occurs since $L\cap F_{1,3}\cap(\bigcap_{i\in[2,d-1]}A_i)=\emptyset$.
Therefore, for each $F\in\mathcal F_1$, $|F\cap [d,d+2]|\geq 2$, and
$$\mathcal F_1=\{F\in\mathcal F:[d-1]\subseteq F, |F\cap [d,d+2]|\geq 2\}.$$

For any $A_1\in\mathcal F_2^{(1)}$ and $s\in [2,d-1]$, it follows from Lemma \ref{lem:inter_size} that $|A_1\cap[d,d+2]|=|A_1\cap A_s|-(d-3)\geq 2$. Thus
$$\mathcal F_2^{(1)}=\{F\in\mathcal F:1\notin F,[2,d-1]\subseteq F,|F\cap[d,d+2]|\geq 2\}.$$


Let $\pi$ be a permutation on $[n]$ that interchanges the elements $1$ and $d+2$, and fixes all the other elements of $[n]$. Then for any $s\in[2,d-1]$, $\pi(\mathcal F_2^{(s)})=\mathcal F_2^{(s)}$,
\begin{equation}
\begin{aligned}
\pi(\mathcal F_2^{(1)})&=\{F\in\pi(\mathcal F):[d-1]\subseteq F,F\cap\{d,d+1\}\neq\emptyset,d+2\notin F\}\\
&\cup\{F\in\pi(\mathcal F):1\notin F,[2,d-1]\subseteq F,\{d,d+1\}\subseteq F,d+2\notin F\}\notag
\end{aligned}
\end{equation}
and
\begin{equation}
\begin{aligned}
\pi(\mathcal F_1)&=\{F\in\pi(\mathcal F):[d-1]\subseteq F,F\cap\{d,d+1\}\neq\emptyset,d+2\in F\}\\
&\cup\{F\in\pi(\mathcal F):1\notin F,[2,d-1]\subseteq F,\{d,d+1\}\subseteq F,d+2\in F\}.\notag
\end{aligned}
\end{equation}
It is readily checked that $\pi(\mathcal F)\subseteq\mathcal H(d+1,d,2)$.

\noindent\underline{{\bf Case 2.3.2.2 $|\varepsilon_1\cap \varepsilon_2|=2$.}}
Without loss of generality assume that $\varepsilon_1\cap \varepsilon_2=\{d+2,d+3\}$. Then  $$\varepsilon_1=\{d,d+2,d+3\}\text{\ \ and\ \ }\varepsilon_2=\{d+1,d+2,d+3\}.$$
Given $s\in[2,d-1]$, by \eqref{eqn:6-24}, for any $A_s=([d-1]\setminus\{s\})\cup\varepsilon'\in\mathcal F_2^{(s)}$, $\varepsilon'$ must be one of $\varepsilon_3$ and $\varepsilon_4$, where
$$\varepsilon_3=\{d,d+1,d+2\}\text{\ \ and\ \ }\varepsilon_4=\{d,d+1,d+3\}.$$
Thus $|E(H_s)\cap\{\varepsilon_3,\varepsilon_4\}|\geq1$ for any $s\in[2,d-1]$ and  $|E(H)\cap\{\varepsilon_3,\varepsilon_4\}|\geq1$.

If $\{\varepsilon_3,\varepsilon_4\}\subseteq E(H)$, then for any $F_1\in\mathcal F_1$, by Claim \ref{cla:inter_with_hypergarph}, $|F_1\cap\varepsilon_1|\geq1$, $|F_1\cap\varepsilon_2|\geq1$, $|F_1\cap\varepsilon_3|\geq1$ and $|F_1\cap\varepsilon_4|\geq1$, and so $\mathcal F_1\subseteq\{[d-1]\cup e:e\in\binom{[d,d+3]}{2}\}$, which implies
$$d_{\mathcal F}([d-1])=|\mathcal F_1|\leq 6.$$
Given $[d-1]\cup Y\in\mathcal F_1$, for any $F_2\in\mathcal F_2$, by Lemma \ref{lem:inter_size}, $|F_2\cap Y|=|F_2\cap([d-1]\cup Y)|-(d-2)\geq 1$. Thus by the assumption of $d_{\mathcal F}([d-1])=\max_{X\in\binom{[n]}{d-1}}d_{\mathcal F}(X)$, we have
$$|\mathcal F_2|\leq \sum_{W\in\binom{[d-1]}{d-2}}\sum_{u\in Y}d_{\mathcal F}(W\cup\{u\})\leq 2(d-1)d_{\mathcal F}([d-1])\leq12(d-1).$$
Then $|\mathcal F|=|\mathcal F_1|+|\mathcal F_2|=12d-6<3d(d+1)$, a contradiction.

If $|E(H)\cap\{\varepsilon_3,\varepsilon_4\}|=1$, then without loss of generality assume that $\varepsilon_3\in E(H)$ and $\varepsilon_4\notin E(H)$.
For any $s\in[2,d-1]$, since $|E(H_s)\cap\{\varepsilon_3,\varepsilon_4\}|\geq1$,
$$\mathcal F_2^{(s)}=\{A_s\}, \text{ where }A_s=([d-1]\setminus\{s\})\cup\varepsilon_3.$$
By Claim \ref{cla:inter_with_hypergarph}, $|F_1\cap\varepsilon_3|\geq 1$ for any $F_1\in\mathcal F_1$.
If there would be $F_1\in\mathcal F_1$ such that $|F_1\cap\varepsilon_3|=1$, then $F_1=[d-1]\cup\{x,y\}$, where $x\in\varepsilon_3$ and $y\notin\varepsilon_3$.
By Claim \ref{cla:inter_with_hypergarph}, $\{x,y\}$ would be a cover of $H$, and so there were $\varepsilon\in E(H)$ such that $y\in\varepsilon$ and $x\notin\varepsilon$. Thus $\varepsilon$ would lie in $H_1$. Let $F_{1,3}=[2,d-1]\cup\varepsilon\in\mathcal F_2^{(1)}$. Then a contradiction occurs since $F_1\cap F_{1,3}\cap(\bigcap_{i\in[2,d-1]}A_i)=\emptyset$.
Therefore for every $F_1\in\mathcal F_1$, $|F_1\cap\varepsilon_3|\geq 2$. Furthermore, since $[d-1]\subseteq F_1$, we have $|F_1\cap\varepsilon_3|=2$, and so
$$d_{\mathcal F}([d-1])=|\mathcal F_1|\leq 3.$$
Given $[d-1]\cup Y\in\mathcal F_1$, for any $A_1\in\mathcal F_2^{(1)}$, by Lemma \ref{lem:inter_size}, $|A_1\cap Y|=|A_1\cap([d-1]\cup Y)|-(d-2)\geq 1$.
Thus by the assumption of $d_{\mathcal F}([d-1])=\max_{X\in\binom{[n]}{d-1}}d_{\mathcal F}(X)$,
$$|\mathcal F_2^{(1)}|\leq\sum_{u\in Y}d_{\mathcal F}([2,d-1]\cup\{u\})\leq\sum_{u\in Y}d_{\mathcal F}([d-1])\leq 6.$$
Recall that $\mathcal F=\mathcal F_1\cup\mathcal F_2$ and $\mathcal F_2=\bigcup_{s\in[d-1]}\mathcal F_2^{(s)}$.
Then $|\mathcal F|=|\mathcal F_1|+|\mathcal F_2^{(1)}|+\sum_{s\in[2,d-1]}|\mathcal F_2^{(s)}|\leq 3+6+d-2=d+7<3d(d+1)$, a contradiction.
\end{proof}

\section{Comparison of the sizes of extremal families}\label{sec:inequalities}

This section is devoted to comparing the sizes of extremal families in Theorems \ref{thm:maximal_intersecting_d+2} and \ref{thm:maximal_intersecting_d+1} that are all maximal non-trivial $d$-wise intersecting families. We will prove Corollaries \ref{cor:first_second_max_inter}--\ref{cor:sixth_max_inter_d=3} which characterize the largest and the second largest maximal non-trivial $d$-wise intersecting $k$-uniform families for any $k>d\geq 3$, the third largest and the fourth largest maximal non-trivial $d$-wise intersecting $k$-uniform families for any $k>d+1\geq 4$, and the fifth largest and the sixth largest maximal non-trivial $3$-wise intersecting  $k$-uniform families for any $k\geq 5$.

\begin{Lemma}\label{lem:size_G(k,d)}
Assume that $d\geq 3$.
\begin{enumerate}
\item[$(1)$] If $k\geq d+1$ and $n\geq 2k-d+1$, then $|\mathcal H(k,d,k-d+1)|>|\mathcal G(k,d)|$.
\item[$(2)$] If $k\geq d+4$ and $n>\max\{2k+6,24(d-1)\}$, then $|\mathcal H(k,d,k-d)|<|\mathcal G(k,d)|$. If $k\in\{d+2,d+3\}$ and $n\geq 2k-d+2$, then $|\mathcal H(k,d,k-d)|>|\mathcal G(k,d)|$.
\end{enumerate}
\end{Lemma}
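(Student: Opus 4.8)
The plan is to work directly from the explicit size formulas \eqref{equ:H(k,d,l)} and \eqref{equ:G(k,d)}. Both $|\mathcal H(k,d,l)|$ (with $l=k-d+1$ or $l=k-d$) and $|\mathcal G(k,d)|$ begin with the term $\binom{n-d+1}{k-d+1}$, and moreover $\mathcal H(k,d,k-d)$ and $\mathcal G(k,d)$ share the second term $\binom{n-k+1}{k-d+1}$ as well. Hence in each comparison the leading binomials cancel, and everything reduces to comparing a handful of lower-order terms; I would set $m=n-k$ throughout to keep the bookkeeping clean.

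For part $(1)$, substituting $l=k-d+1$ into \eqref{equ:H(k,d,l)} gives $|\mathcal H(k,d,k-d+1)|=\binom{n-d+1}{k-d+1}-\binom{n-k}{k-d+1}+(d-1)(n-k)$. Subtracting \eqref{equ:G(k,d)} and applying Pascal's identity twice, first $\binom{n-k+1}{k-d+1}-\binom{n-k}{k-d+1}=\binom{n-k}{k-d}$ and then $\binom{n-k}{k-d}-\binom{n-k-1}{k-d-1}=\binom{n-k-1}{k-d}$, I expect the difference to collapse to
$$|\mathcal H(k,d,k-d+1)|-|\mathcal G(k,d)|=\binom{n-k-1}{k-d}+(d-3)(n-k-1).$$
Both summands are nonnegative once $d\geq3$, and $\binom{n-k-1}{k-d}\geq1$ precisely when $n\geq 2k-d+1$, which is exactly the hypothesis; this yields strict positivity and proves $(1)$.

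For part $(2)$, the analogous subtraction with $l=k-d$ leaves $|\mathcal H(k,d,k-d)|-|\mathcal G(k,d)|=(d-1)\binom{n-k+1}{2}-\binom{n-k-1}{k-d-1}-2(n-k)-(d-3)$, so the sign is governed by the race between the quadratic $(d-1)\binom{n-k+1}{2}$ and the binomial $\binom{n-k-1}{k-d-1}$. When $k\in\{d+2,d+3\}$ the latter has lower index $1$ or $2$, so I would simply evaluate it and check that the resulting low-degree polynomial in $m$ is positive; treating the expression as a function of $(m,d)$, its $d$-derivative is $\binom{m+1}{2}-1>0$, so it suffices to verify the base case $d=3$, where it factors neatly (for instance as $(m-1)^2$ when $k=d+2$), and the hypothesis $n\geq 2k-d+2$ guarantees $m$ is large enough. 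When $k\geq d+4$ one has $3\leq k-d-1\leq (n-k-1)-3$, so unimodality of the binomial coefficients gives $\binom{n-k-1}{k-d-1}\geq\binom{n-k-1}{3}$; it then suffices to show $\binom{n-k-1}{3}\geq(d-1)\binom{n-k+1}{2}$, since the discarded terms $-2(n-k)-(d-3)$ are negative.

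The main obstacle, and really the only non-routine point, is verifying this last cubic-beats-quadratic inequality under the precise hypothesis $n>\max\{2k+6,24(d-1)\}$. The subtlety is that $n>24(d-1)$ alone does not control $N:=n-k-1$ when $k$ is large, so I would combine both bounds: $n>2k+6$ forces $N>n/2+2$, and then $n>24(d-1)$ gives $N>12(d-1)+2$. This comfortably exceeds the threshold (of order $\tfrac{49}{12}(d-1)$) needed to make $\binom{N}{3}\geq(d-1)\binom{N+2}{2}$ hold, which closes the argument for $k\geq d+4$.
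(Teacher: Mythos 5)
Your proposal is correct and follows essentially the same route as the paper: subtract the explicit size formulas so the leading binomials cancel, reduce part (1) to $\binom{n-k-1}{k-d}+(d-3)(n-k-1)>0$, and for part (2) with $k\geq d+4$ use $\binom{n-k-1}{k-d-1}\geq\binom{n-k-1}{3}$ and then win the cubic-versus-quadratic race using both hypotheses on $n$ (the paper closes this last step via the bound $\tfrac{(n-k-3)^3}{3(d-1)n^2}>\tfrac{n}{24(d-1)}>1$, while you lower-bound $n-k-1>12(d-1)+2$ directly; for $k\in\{d+2,d+3\}$ the paper bounds $\binom{n-k-1}{k-d-1}\leq\binom{n-k-1}{2}$ and simplifies to $(d-2)\binom{n-k+1}{2}-d+2>0$ rather than using monotonicity in $d$ plus a $d=3$ base case). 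These are only cosmetic differences in how the final elementary inequalities are verified.
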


\begin{proof}
(1) If $k\geq d+1\geq 4$ and $n\geq 2k-d+1$, then by \eqref{equ:H(k,d,l)} and \eqref{equ:G(k,d)},
$|\mathcal H(k,d,k-d+1)|-|\mathcal G(k,d)|=\binom{n-k-1}{k-d}+(d-3)(n-k-1)>0.$

(2) By \eqref{equ:H(k,d,l)} and \eqref{equ:G(k,d)}, $|\mathcal H(k,d,k-d)|-|\mathcal G(k,d)|=(d-1)\binom{n-k+1}{2}-\binom{n-k-1}{k-d-1}-2(n-k)-d+3.$

If $k\geq d+4\geq 7$ and $n\geq 2k-d+3$, then
$|\mathcal H(k,d,k-d)|-|\mathcal G(k,d)|\leq (d-1)\binom{n-k+1}{2}-\binom{n-k-1}{3}-2(n-k)-d+3<(d-1)\binom{n-k+1}{2}-\binom{n-k-1}{3}$. Since $$\frac{\binom{n-k-1}{3}}{(d-1)\binom{n-k+1}{2}}=\frac{(n-k-1)(n-k-2)(n-k-3)}{3(d-1)(n-k+1)(n-k)}>\frac{(n-k-3)^3}{3(d-1)n^2}
>\frac{n}{24(d-1)}>1,$$
where the penultimate inequality holds when $n>2k+6$ and the last inequality holds when $n>24(d-1)$, we have $|\mathcal H(k,d,k-d)|<|\mathcal G(k,d)|$ for $n>\max\{2k+6,24(d-1)\}$.

If $k\in\{d+2,d+3\}$ and $n\geq 2k-d+2$, then $|\mathcal H(k,d,k-d)|-|\mathcal G(k,d)|\geq(d-1)\binom{n-k+1}{2}-\binom{n-k-1}{2}-2(n-k)-d+3=(d-2)\binom{n-k+1}{2}-d+2>0$.
\end{proof}

\begin{Lemma}\label{lem:size_S(k,3)}
\begin{enumerate}
\item[$(1)$] If $k\geq 5$ and $n\geq 2k-2$, then $|\mathcal G(k,3)|>|\mathcal S_1(k,3)|>|\mathcal S(k,3)|$. Furthermore, if $k\geq6$ and $n\geq 2k-2$, then $|\mathcal S(k,3)|>|\mathcal S_3(k,3)|=|\mathcal S_2(k,3)|$.
\item[$(2)$] If $5\leq k\leq 7$ and $n\geq 2k-1$, then $|\mathcal H(k,3,k-3)|>|\mathcal S_1(k,3)|$.
\item[$(3)$] If $k=8$ and $n\geq 22$, then $|\mathcal H(k,3,k-3)|<|\mathcal S(k,3)|$.
\item[$(4)$] If $k\geq 9$ and $n>\max\{48,2k+10\}$, then $|\mathcal H(k,3,k-3)|<|\mathcal S_2(k,3)|$. If $k=8$ and $n\geq 11$, then $|\mathcal H(k,3,k-3)|>|\mathcal S_2(k,3)|$.
\end{enumerate}
\end{Lemma}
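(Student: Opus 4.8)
The plan is to prove every inequality by inserting the closed forms from \eqref{equ:H(k,d,l)}, \eqref{equ:G(k,d)}, \eqref{equ:S(k,3)}, \eqref{equ:S1(k,3)} and \eqref{equ:S2(k,3)_S3(k,3)} and then cancelling the two leading terms $\binom{n-2}{k-2}-\binom{n-k+1}{k-2}$, which are shared by all six families. Recording that $|\mathcal H(k,3,k-3)|=\binom{n-2}{k-2}-\binom{n-k+1}{k-2}+2\binom{n-k+1}{2}$ (set $d=3$, $l=k-3$ in \eqref{equ:H(k,d,l)}), each comparison collapses to a contest among the tail terms $\binom{n-k-1}{k-4}$, $\binom{n-k-2}{k-5}$, $\binom{n-k-3}{k-6}$, $2\binom{n-k+1}{2}$ and a remainder that is linear or constant in $n$. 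Throughout I would reduce binomial differences with Pascal's identity and watch for the degenerate ranges of $k$ in which a tail binomial $\binom{m}{r}$ has $m<r$ and hence vanishes, checking that under each hypothesis the binomial asserted to dominate is genuinely positive.

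For part (1) I would compute $|\mathcal G(k,3)|-|\mathcal S_1(k,3)|=\binom{n-k-1}{k-4}-\binom{n-k-2}{k-5}+(n-k)-3=\binom{n-k-2}{k-4}+(n-k)-3$, which is positive once $n\geq 2k-2$ because then $n-k\geq k-2\geq 3$ and $\binom{n-k-2}{k-4}\geq 1$; the inequality $|\mathcal S_1(k,3)|>|\mathcal S(k,3)|$ is immediate from the tail difference $(n-k+3)-6$; and $|\mathcal S(k,3)|>|\mathcal S_2(k,3)|=|\mathcal S_3(k,3)|$ follows from $\binom{n-k-2}{k-5}-\binom{n-k-3}{k-6}=\binom{n-k-3}{k-5}>0$ for $k\geq 6$ and $n\geq 2k-2$, again by Pascal. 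Parts (2) and (3), together with the $k=8$ case of (4), involve only finitely many values of $k$, so each reduces to an explicit polynomial inequality in $n$ of bounded degree; expanding the binomials and substituting the stated thresholds ($n\geq 2k-1$, $n\geq 22$, $n\geq 11$) settles them by direct evaluation, the relevant polynomial inequality being valid at the threshold and preserved for larger $n$.

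The main obstacle is the first assertion of part (4): for $k\geq 9$ one must show $\binom{n-k-3}{k-6}+6>2\binom{n-k+1}{2}$, pitting a binomial of degree $k-6\geq 3$ against a quadratic whose comparison threshold must hold uniformly in $k$. My plan is to exploit the unimodality of binomial coefficients: since $3\leq k-6\leq n-k-6$ under $k\geq 9$ and $n\geq 2k$, we have $\binom{n-k-3}{k-6}\geq\binom{n-k-3}{3}=\tfrac{1}{6}(n-k-3)(n-k-4)(n-k-5)$, reducing the claim to the cubic-versus-quadratic inequality $\tfrac{1}{6}(n-k-3)(n-k-4)(n-k-5)>(n-k)(n-k+1)$ in the single variable $t=n-k$. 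Writing $t=n-k>k+10\geq 19$ (from $n>2k+10$) makes the cubic dominate, and the extra requirement $n>48$ supplies the absolute room needed to clear the constant factors uniformly; alternatively one can mirror the ratio estimate in the proof of Lemma \ref{lem:size_G(k,d)}(2), bounding $\binom{n-k-3}{k-6}/\binom{n-k+1}{2}$ from below by a cube-over-square expression in $n$ and showing it exceeds $2$ once $n>\max\{48,2k+10\}$. Pinning down exactly these thresholds, rather than merely asymptotic dominance, is the delicate part of the argument.
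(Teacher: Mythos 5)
Your proposal is correct and follows essentially the same route as the paper: substitute the closed forms, cancel the common leading terms $\binom{n-2}{k-2}-\binom{n-k+1}{k-2}$, reduce the tail comparisons with Pascal's identity, treat the finitely many $k$ in parts (2), (3) and the $k=8$ case of (4) by direct polynomial evaluation, and for $k\geq 9$ in part (4) bound $\binom{n-k-3}{k-6}\geq\binom{n-k-3}{3}$ by unimodality and win the resulting cubic-versus-quadratic contest (the paper phrases this last step as a ratio chain $\binom{n-k-3}{3}/\bigl(2\binom{n-k+1}{2}\bigr)>(n-k-5)^3/(6n^2)>n/48>1$, which is equivalent in substance to your direct comparison in $t=n-k$). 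The only caveat, which the paper's own proof shares, is the boundary case $k=5$, $n=2k-2=8$, where $|\mathcal S_1(5,3)|-|\mathcal S(5,3)|=n-k-3=0$ rather than being strictly positive; this is immaterial for the applications since the lemma is only invoked for $n>n_1(k,3)$.
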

\begin{proof}
(1) If $k\geq 5$ and $n\geq 2k-2$, then by (\ref{equ:G(k,d)}) and (\ref{equ:S1(k,3)}), $|\mathcal G(k,3)|-|\mathcal S_1(k,3)|=\binom{n-k-2}{k-4}+n-k-3>0$, and by (\ref{equ:S(k,3)}) and (\ref{equ:S1(k,3)}), $|\mathcal S_1(k,3)|-|\mathcal S(k,3)|=n-k-3>0$. If $k\geq 6$ and $n\geq 2k-2$, then by (\ref{equ:S(k,3)}) and (\ref{equ:S2(k,3)_S3(k,3)}), $|\mathcal S(k,3)|-|\mathcal S_2(k,3)|=|\mathcal S(k,3)|-|\mathcal S_3(k,3)|=\binom{n-k-3}{k-5}>0$.

(2) If $5\leq k\leq 7$ and $n\geq 2k-1$, then by (\ref{equ:H(k,d,l)}) and (\ref{equ:S1(k,3)}), $|\mathcal H(k,3,k-3)|-|\mathcal S_1(k,3)|=2\binom{n-k+1}{2}-\binom{n-k-2}{k-5}-(n-k+3)\geq 2\binom{n-k+1}{2}-\binom{n-k-2}{2}-(n-k+3)
=\binom{n-k+1}{2}+2(n-k-3)>0$.

(3) If $n\geq 22$, then by (\ref{equ:H(k,d,l)}) and \eqref{equ:S(k,3)}, $|\mathcal H(8,3,5)|-|\mathcal S(8,3)|=2\binom{n-7}{2}-\binom{n-10}{3}-6<0$.

(4) If $k\geq 9$ and $n\geq 2k$, then by (\ref{equ:H(k,d,l)}) and (\ref{equ:S2(k,3)_S3(k,3)}), $|\mathcal H(k,3,k-3)|-|\mathcal S_2(k,3)|=2\binom{n-k+1}{2}-\binom{n-k-3}{k-6}-6<2\binom{n-k+1}{2}-\binom{n-k-3}{3}$. Since
$$\frac{\binom{n-k-3}{3}}{2\binom{n-k+1}{2}}=\frac{(n-k-3)(n-k-4)(n-k-5)}{6(n-k+1)(n-k)}>\frac{(n-k-5)^3}{6n^2}
>\frac{n}{48}>1,$$
where the penultimate inequality holds when $n>2k+10$ and the last inequality holds when $n>48$, we have  $|\mathcal H(k,3,k-3)|<|\mathcal S_2(k,3)|$ for $n>\max\{48,2k+10\}$.
If $k=8$ and $n\geq 11$, $|\mathcal H(8,3,5)|-|\mathcal S_2(8,3)|=2\binom{n-7}{2}-\binom{n-11}{2}-6>0$.
\end{proof}

\begin{Lemma}\label{lem:size_H(k,d,l)}
Let $d\geq 3$. If $3\leq l\leq k-d+1$ and $n>(d-1)^{1/(l-2)}(k-d)+k+2$, then $|\mathcal H(k,d,l+1)|>|\mathcal H(k,d,l)|$.
\end{Lemma}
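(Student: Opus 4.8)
The plan is to reduce the claimed inequality to a single comparison between two binomial coefficients and then to estimate their ratio. First I would substitute the closed formula \eqref{equ:H(k,d,l)} for both $|\mathcal H(k,d,l+1)|$ and $|\mathcal H(k,d,l)|$. The leading term $\binom{n-d+1}{k-d+1}$ is common to both and cancels. Applying Pascal's identity twice, once to $\binom{n-d-l+1}{k-d+1}=\binom{n-d-l}{k-d+1}+\binom{n-d-l}{k-d}$ and once to $\binom{n-d-l+1}{k-d-l+2}=\binom{n-d-l}{k-d-l+2}+\binom{n-d-l}{k-d-l+1}$, collapses the remaining terms, and I expect to obtain the clean identity
\[
|\mathcal H(k,d,l+1)|-|\mathcal H(k,d,l)|=\binom{n-d-l}{k-d}-(d-1)\binom{n-d-l}{k-d-l+2}.
\]
It then suffices to prove $\binom{N}{k-d}>(d-1)\binom{N}{k-d-l+2}$, where $N=n-d-l$.

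Next I would write the ratio of these two binomials as a product of $l-2$ fractions. Since $(k-d)-(k-d-l+2)=l-2$, one has
\[
\frac{\binom{N}{k-d}}{\binom{N}{k-d-l+2}}=\frac{(n-k-2)(n-k-3)\cdots(n-k-l+1)}{(k-d)(k-d-1)\cdots(k-d-l+3)},
\]
a quotient of two strings of $l-2$ consecutive integers. The goal is to show this exceeds $d-1$, and here the hypothesis $n>(d-1)^{1/(l-2)}(k-d)+k+2$ enters: since $k-d\ge l-1\ge 2$, it is exactly equivalent to $\bigl(\tfrac{n-k-2}{k-d}\bigr)^{l-2}>d-1$, so it would suffice to bound the product below by $\bigl(\tfrac{n-k-2}{k-d}\bigr)^{l-2}$.

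The one point requiring care is the choice of pairing. If the fractions are grouped in their natural order (numerator $N-j$ over denominator $j+1$), the smallest factor is $\tfrac{n-k-l+1}{k-d}$, which lies \emph{below} $\tfrac{n-k-2}{k-d}$ and gives the wrong bound. Instead I would pair the $i$-th largest numerator with the $i$-th largest denominator, writing the ratio as $\prod_{i=1}^{l-2}\frac{n-k-1-i}{k-d+1-i}$, and then check that each such factor is at least $\tfrac{n-k-2}{k-d}$. A direct cross-multiplication shows the difference of the two sides factors as $(1-i)(2k-d-n+2)$, which is nonnegative precisely when $n\ge 2k-d+2$. Finally I would observe that the hypothesis already forces $n>2k-d+2$: since $d\ge3$ and $l\ge3$ we have $(d-1)^{1/(l-2)}>1$, hence $(d-1)^{1/(l-2)}(k-d)>k-d$ and so $n>(k-d)+k+2=2k-d+2$. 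Combining, the product is at least $\bigl(\tfrac{n-k-2}{k-d}\bigr)^{l-2}>d-1$, which completes the argument. The computation is elementary throughout; the only genuine obstacle is spotting the correct re-pairing of the fractions so that every factor clears the threshold $\tfrac{n-k-2}{k-d}$.
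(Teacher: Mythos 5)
Your proposal is correct and takes essentially the same route as the paper: the same telescoped identity $|\mathcal H(k,d,l+1)|-|\mathcal H(k,d,l)|=\binom{n-d-l}{k-d}-(d-1)\binom{n-d-l}{k-d-l+2}$, the same expression of the ratio as a product of $l-2$ paired fractions $\frac{n-k-1-i}{k-d+1-i}$, and the same lower bound $\left(\frac{n-k-2}{k-d}\right)^{l-2}>d-1$. The only difference is presentational: the paper invokes the monotonicity fact $\frac{a}{b}>\frac{a+1}{b+1}$ for $a>b>0$ where you cross-multiply explicitly, and it leaves the prerequisite $n>2k-d+2$ implicit where you verify it from the hypothesis.
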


\begin{proof} By \eqref{equ:H(k,d,l)}, $|\mathcal H(k,d,l+1)|-|\mathcal H(k,d,l)|=\binom{n-d-l}{k-d}-(d-1)\binom{n-d-l}{k-d-l+2}$.
Since $\frac{a}{b}>\frac{a+1}{b+1}$ for $a>b>0$, we have
$$\frac{\binom{n-d-l}{k-d}}{(d-1)\binom{n-d-l}{k-d-l+2}}=
\frac{(n-k-2)\cdots(n-k-l+1)}{(d-1)(k-d)\cdots(k-d-l+3)}
>\frac{1}{d-1}(\frac{n-k-2}{k-d})^{l-2}>1.$$
Therefore, $|\mathcal H(k,d,l+1)|>|\mathcal H(k,d,l)|$.
\end{proof}

\begin{Lemma}\label{lem:size_A(k,d)}
Assume that $d\geq 3$.
\begin{enumerate}
\item[$(1)$] If $3\leq l\leq \min\{d+1,k-d+2\}$, then $|\mathcal H(k,d,2)|>|\mathcal H(k,d,l)|$. If $d+2\leq l\leq k-d+2$ and $n>d(d-1)(k-d)2^{k-d-1}$, then $|\mathcal H(k,d,2)|<|\mathcal H(k,d,l)|$.
\item[$(2)$] If $d+1\leq k\leq 2d+1$ and $n\geq 3d+3$, $|\mathcal H(k,d,2)|>|\mathcal G(k,d)|$.
\end{enumerate}
\end{Lemma}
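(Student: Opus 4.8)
The plan is to reduce both parts to estimates on the single-step difference already computed in Lemma~\ref{lem:size_H(k,d,l)}. Writing $f(m)=\binom{n-d-m}{k-d}-(d-1)\binom{n-d-m}{k-d-m+2}$, that lemma's computation gives $|\mathcal H(k,d,m+1)|-|\mathcal H(k,d,m)|=f(m)$, so telescoping yields $|\mathcal H(k,d,l)|-|\mathcal H(k,d,2)|=\sum_{m=2}^{l-1}f(m)$. Since $k-d-m+2=k-d$ precisely when $m=2$, the first summand is $f(2)=(2-d)\binom{n-d-2}{k-d}\leq 0$, and it is this very negative term that I want to dominate the sum in the first assertion of part~(1).

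For the first assertion of part~(1) I would argue directly. For $m\geq 3$ one has $f(m)\leq\binom{n-d-m}{k-d}\leq\binom{n-d-3}{k-d}$, by discarding the nonnegative subtracted term and using monotonicity of $\binom{\cdot}{k-d}$ in the top index. Because $l\leq d+1$, the tail $\sum_{m=3}^{l-1}f(m)$ has at most $l-3\leq d-2$ terms, whence $\sum_{m=2}^{l-1}f(m)\leq -(d-2)\binom{n-d-2}{k-d}+(d-2)\binom{n-d-3}{k-d}=(d-2)\bigl(\binom{n-d-3}{k-d}-\binom{n-d-2}{k-d}\bigr)<0$. This is strictly negative for every $n$ in the relevant range, and in particular it disposes of the delicate boundary case $l=d+1$, where the leading $\binom{n}{k-d}$-coefficients of the two families coincide, with no lower bound on $n$ required.

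For the second assertion of part~(1) I would instead use the closed form $\sum_{m=2}^{l-1}f(m)=A-(d-1)B$, where $A=\sum_{m=2}^{l-1}\binom{n-d-m}{k-d}$ and $B=\binom{n-d-1}{k-d}-\binom{n-d-l+1}{k-d-l+2}$, the latter coming from two hockey-stick applications after rewriting $\binom{n-d-m}{k-d-m+2}=\binom{n-d-m}{n-k-2}$ (constant lower index). Bounding $A\geq(l-2)\binom{n-d-l+1}{k-d}$ and $B\leq\binom{n-d-1}{k-d}$ and using $l-2\geq d$ (the range $d+2\le l\le k-d+2$ being nonempty forces $k\geq 2d$), it suffices to prove $\binom{n-d-1}{k-d}/\binom{n-d-l+1}{k-d}<d/(d-1)$. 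This ratio is at most $\bigl((n-k)/(n-k-l+2)\bigr)^{k-d}$, and the hypothesis $n>d(d-1)(k-d)2^{k-d-1}$ is tuned exactly to push it below $1+\tfrac{1}{d-1}$ via the elementary estimate $(1+x)^{k-d}-1\leq(k-d)\,x\,2^{k-d-1}$ for $0\leq x\leq 1$. This ratio bound is the only genuinely computational step here.

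For part~(2) I would split on the sign of the leading $\binom{n}{k-d}$-coefficient of $|\mathcal H(k,d,2)|-|\mathcal G(k,d)|$, which equals $2d+1-k$. When $d+1\leq k\leq 2d$ it is positive, and I would avoid all computation by chaining: $|\mathcal H(k,d,2)|\geq|\mathcal H(k,d,k-d+1)|>|\mathcal G(k,d)|$, the first step being the first assertion of part~(1) with $l=k-d+1\leq d+1$ (an equality when $k=d+1$) and the second being Lemma~\ref{lem:size_G(k,d)}(1), applicable since $n\geq 3d+3\geq 2k-d+1$. The remaining case $k=2d+1$ is the crux, since there the leading terms cancel; expanding via \eqref{equ:H(k,d,l)} and \eqref{equ:G(k,d)} and telescoping turns the difference into $\sum_{m=2}^{d}\bigl(\binom{n-d-1}{d+1}-\binom{n-d-m}{d+1}\bigr)-\binom{n-2d-2}{d}-2(n-2d-1)-(d-3)$. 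Each bracket is at least $\binom{n-d-2}{d}$, so the sum is at least $(d-1)\binom{n-d-2}{d}\geq 2\binom{n-d-2}{d}$, which exceeds $\binom{n-2d-2}{d}+\binom{n-d-2}{d}$; the residual inequality $\binom{n-d-2}{d}>2(n-2d-1)+d-3$ then holds for all $n\geq 3d+3$ because $\binom{n-d-2}{d}\geq\binom{2d+1}{d}\geq\binom{2d+1}{2}$ already dominates the linear right-hand side and grows faster. The main obstacle throughout is exactly these two boundary regimes, $l=d+1$ in part~(1) and $k=2d+1$ in part~(2), where first-order asymptotics cancel and positivity must be read off from the second-order binomial terms against the small explicit thresholds.
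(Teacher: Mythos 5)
Your first assertion of part (1) and all of part (2) check out. The telescoping through $f(m)$ is the paper's computation reorganized around the one-step difference from Lemma \ref{lem:size_H(k,d,l)}, with the single negative term $f(2)=-(d-2)\binom{n-d-2}{k-d}$ absorbing the at most $d-2$ positive terms $f(m)\leq\binom{n-d-3}{k-d}$; that is a legitimate variant of the paper's pairing argument. In part (2) your chain $|\mathcal H(k,d,2)|\geq|\mathcal H(k,d,k-d+1)|>|\mathcal G(k,d)|$ for $d+1\leq k\leq 2d$ is a genuine shortcut over the paper's explicit expansion for $d+2\leq k\leq 2d$, and your direct estimate for $k=2d+1$ agrees with the paper's and survives scrutiny at $n=3d+3$.

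The second assertion of part (1) has a real gap. After reducing to $\binom{n-d-1}{k-d}\big/\binom{n-d-l+1}{k-d}<d/(d-1)$, you bound the ratio by $(1+x)^{k-d}$ with $x=(l-2)/(n-k-l+2)$ and apply $(1+x)^{k-d}-1\leq(k-d)x\,2^{k-d-1}$; forcing the right-hand side below $1/(d-1)$ requires $n>k+l-2+(d-1)(k-d)(l-2)2^{k-d-1}$. Since $l\geq d+2$ gives $l-2\geq d$, this threshold strictly exceeds the stated hypothesis $n>d(d-1)(k-d)2^{k-d-1}$ for every admissible $l$ (by at least $k+l-2$, and by a multiplicative factor of roughly $(l-2)/d$ when $l$ is near $k-d+2$ and $k\gg 2d$), so the hypothesis is not in fact "tuned exactly" to your estimate and a nonempty range of admissible $n$ is left unproved. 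The structural reason is that, having replaced $l-2$ by $d$, you must squeeze the ratio of two binomial coefficients of the \emph{same} degree $k-d$ below $d/(d-1)$, and that ratio tends to $1$ only at a rate governed by $l/n$; the paper instead reduces to $\frac{d(d-1)}{2}\binom{n-d-2}{k-d-1}<\binom{n-d-l+1}{k-d}$, a degree-$(k-d-1)$ term against a degree-$(k-d)$ term, whose ratio grows linearly in $n$, and the stated threshold is calibrated to that comparison. Your argument proves the inequality only above a larger threshold (harmless for the paper's applications, where $n>n_1(k,d)$, but not the statement as written); to get the stated bound you would need to switch to the paper's degree-mismatch comparison.
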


\begin{proof}
(1) For $l\geq 3$,
$|\mathcal H(k,d,2)|-|\mathcal H(k,d,l)|=(d-1)\left(\binom{n-d-1}{k-d}-\binom{n-d-l+1}{k-d-l+2}\right)-\sum_{i=3}^{l}\binom{n-d-i+1}{k-d}.$

If $3\leq l\leq \min\{d+1,k-d+2\}$,
\begin{align*}
&\ |\mathcal H(k,d,2)|-|\mathcal H(k,d,l)| \\
\geq &\ (d-1)\binom{n-d-1}{k-d}-\sum_{i=3}^{d+1}\binom{n-d-i+1}{k-d}-(d-1)\binom{n-d-l+1}{k-d-l+2}\\
= &\ \sum_{i=3}^{d+1}\left(\binom{n-d-1}{k-d}-\binom{n-d-i+1}{k-d}\right)-(d-1)\binom{n-d-l+1}{k-d-l+2}.
\end{align*}
Notice that for $i\in[4,d+1]$,
$\binom{n-d-1}{k-d}-\binom{n-d-i+1}{k-d}>\binom{n-d-2}{k-d-1}\geq\binom{n-d-l+1}{k-d-l+2}$, and for $i=3$, $\binom{n-d-1}{k-d}-\binom{n-d-2}{k-d}=\binom{n-d-2}{k-d-1}\geq\binom{n-d-l+1}{k-d-l+2}$. Therefore, $|\mathcal H(k,d,2)|>|\mathcal H(k,d,l)|$.

If $d+2\leq l\leq k-d+2$, then
\begin{align*}
&\ |\mathcal H(k,d,2)|-|\mathcal H(k,d,l)|\\
= &\ \sum_{i=d+2}^{2d}\sum_{j=d+2}^{i}\binom{n-j}{k-d-1}-\sum_{i=d+2}^l\binom{n-d-i+1}{k-d}-(d-1)\binom{n-d-l+1}{k-d-l+2}\\
< &\ \frac{d(d-1)}{2}\binom{n-d-2}{k-d-1}-\binom{n-d-l+1}{k-d}.\notag
\end{align*}
Since
$$\frac{2\binom{n-d-l+1}{k-d}}{d(d-1)\binom{n-d-2}{k-d-1}}
>\frac{2(n-k-l+2)^{k-d}}{d(d-1)(k-d)n^{k-d-1}}>\frac{n}{d(d-1)(k-d)2^{k-d-1}}>1,$$
where the penultimate inequality holds when $n>4k-2d\geq 2k+2l-4$ and the last inequality holds when $n>d(d-1)(k-d)2^{k-d-1}$, we have $|\mathcal H(k,d,2)|<|\mathcal H(k,d,l)|$ for $n>d(d-1)(k-d)2^{k-d-1}$ that is larger than $4k-2d$.

(2) If $k=d+1$, it follows from Lemma \ref{lem:size_G(k,d)} that $|\mathcal H(d+1,d,2)|>|\mathcal G(d+1,d)|$ for $n\geq d+3$. Assume that $d+2\leq k\leq 2d+1$. By \eqref{equ:H(k,d,l)} and \eqref{equ:G(k,d)},
$
|\mathcal H(k,d,2)|-|\mathcal G(k,d)|
= (d-1)\binom{n-d-1}{k-d}-\sum_{i=d+2}^{k-1}\binom{n-i}{k-d}-\binom{n-k-1}{k-d-1}-2(n-k)-(d-3).
$
If $k=2d+1$, then 
\begin{align*}
&\ |\mathcal H(2d+1,d,2)|-|\mathcal G(2d+1,d)| \\
= &\ \sum_{i=d+2}^{2d}\sum_{j=d+2}^{i}\binom{n-j}{d}-\binom{n-2d-2}{d}-2n+3d+5 \\
\geq &\ \sum_{i=d+2}^{d+3}\sum_{j=d+2}^{i}\binom{n-j}{d}-\binom{n-2d-2}{d}-2n+3d+5 \\
> &\ 2\binom{n-d-2}{d}-2n+3d+5 > 4(n-d-2)-2n+3d+5=2n-d-3>0,
\end{align*}
where the last two inequalities hold when $n\geq 3d+3$.
If $d+2\leq k\leq 2d$, then $k-d-2\leq d-2$, and hence
\begin{align*}
& |\mathcal H(k,d,2)|-|\mathcal G(k,d)| \\
\geq &\ \binom{n-d-1}{k-d}+\sum_{i=d+2}^{k-1}\left(\binom{n-d-1}{k-d}-\binom{n-i}{k-d}\right)-
\binom{n-k-1}{k-d-1}-2(n-k)-(d-3) \\
= & \ \binom{n-d-1}{k-d} +\sum_{i=d+2}^{k-1}\sum_{j=d+2}^i \binom{n-j}{k-d-1} - \binom{n-k-1}{k-d-1} - 2(n-k)-(d-3) \\
\geq & \ \binom{n-d-1}{k-d} - 2(n-k)-(d-3) > \binom{n-d-1}{k-d} - 2n > \binom{n-d-1}{2} - 2n >0,
\end{align*}
where the last two inequalities hold when $n\geq 3d+3$. Therefore, $|\mathcal H(k,d,2)|>|\mathcal G(k,d)|$ for $4\leq d+1\leq k\leq 2d+1$ and $n\geq 3d+3$.
\end{proof}

\begin{proof}[{\bf Proof of Corollaries \ref{cor:first_second_max_inter}--\ref{cor:sixth_max_inter_d=3}:}]
If $k=d+1\geq 4$ and $n>(d+1)^2$, it follows from Theorem \ref{thm:maximal_intersecting_d+1} that if $\mathcal F\subseteq\binom{[n]}{d+1}$ is a maximal non-trivial $d$-wise intersecting family and $|\mathcal F|>3d(d+1)$, then $\mathcal F\cong\mathcal H(d+1,d,2)$ or $\mathcal H(d+1,d,3)$. By Lemma \ref{lem:size_A(k,d)}$(1)$, $|\mathcal H(d+1,d,2)|>|\mathcal H(d+1,d,3)|$. Therefore, Corollary \ref{cor:first_second_max_inter} holds for $k=d+1$.

Assume that $k\geq d+2$ and $n>n_1(k,d)=d+2(k-d)^2(k^{k-d}-1)^kk!$. Let $\mathcal F\subseteq\binom{[n]}{k}$ be a maximal non-trivial $d$-wise intersecting family with $|\mathcal F|>(k-d-\frac{1}{2})\binom{n-d}{k-d}$.

\noindent\underline{{\bf Case 1 $k=d+2$.}} If $d\geq 4$, it follows from Theorem \ref{thm:maximal_intersecting_d+2} that $\mathcal F$ is isomorphic to one of $\mathcal H(d+2,d,2)$, $\mathcal H(d+2,d,3)$, $\mathcal H(d+2,d,4)$ and $\mathcal G(d+2,d)$.
By Lemma \ref{lem:size_A(k,d)}$(1)$, $|\mathcal H(d+2,d,2)|>|\mathcal H(d+2,d,4)|$.
For $n>n_1(d+2,d)>3d+2$, by Lemma \ref{lem:size_H(k,d,l)}, $|\mathcal H(d+2,d,4)|>|\mathcal H(d+2,d,3)|$.
For $n>n_1(d+2,d)>d+5$, by Lemma \ref{lem:size_G(k,d)}$(1)$, $|\mathcal H(d+2,d,3)|>|\mathcal G(d+2,d)|$.
Therefore,
$|\mathcal H(d+2,d,2)|>|\mathcal H(d+2,d,4)|>|\mathcal H(d+2,d,3)|>|\mathcal G(d+2,d)|,$
and hence Corollaries \ref{cor:first_second_max_inter}, \ref{cor:third_max_inter} and \ref{cor:fourth_max_inter} hold for $k=d+2$ and $d\geq 4$.

If $d=3$ and $k=5$, it follows from Theorem \ref{thm:maximal_intersecting_d+2} that $\mathcal F$ is isomorphic to one of $\mathcal H(5,3,2)$, $\mathcal H(5,3,3)$, $\mathcal H(5,3,4)$, $\mathcal G(5,3)$, $\mathcal S(5,3)$ and $\mathcal S_1(5,3)$. For $n>n_1(5,3)$, using the same argument as in the above paragraph, we can see that $|\mathcal H(5,3,2)|>|\mathcal H(5,3,4)|>|\mathcal H(5,3,3)|>|\mathcal G(5,3)|$.
For $n>n_1(5,3)>8$, $|\mathcal G(5,3)|>|\mathcal S_1(5,3)|>|\mathcal S(5,3)|$ by lemma \ref{lem:size_S(k,3)}$(1)$.
Therefore, Corollaries \ref{cor:first_second_max_inter}--\ref{cor:sixth_max_inter_d=3} hold for $k=5$ and $d=3$.

\noindent\underline{{\bf Case 2 $d+3\leq k\leq 2d$.}} If $d=3$ and $k=2d=d+3=6$, It follows from Theorem \ref{thm:maximal_intersecting_d+2} that $\mathcal F$ is isomorphic to one of $\mathcal H(6,3,2)$, $\mathcal H(6,3,3)$, $\mathcal H(6,3,4)$, $\mathcal H(6,3,5)$, $\mathcal G(6,3)$, $\mathcal S(6,3)$, $\mathcal S_1(6,3)$, $\mathcal S_2(6,3)$ and $\mathcal S_3(6,3)$. Since $n>n_1(6,3)$, Lemma \ref{lem:size_A(k,d)}$(1)$ implies $|\mathcal H(6,3,5)|>|\mathcal H(6,3,2)|>|\mathcal H(6,3,4)|$,
Lemma \ref{lem:size_H(k,d,l)} implies $|\mathcal H(6,3,4)|>|\mathcal H(6,3,3)|$, Lemma \ref{lem:size_G(k,d)}$(2)$ implies $|\mathcal H(6,3,3)|>|\mathcal G(6,3)|$, and Lemma \ref{lem:size_S(k,3)}$(1)$ implies $|\mathcal G(6,3)|>|\mathcal S_1(6,3)|>|\mathcal S(6,3)|>|\mathcal S_2(6,3)|=|\mathcal S_3(6,3)|$. Therefore, Corollaries \ref{cor:first_second_max_inter}--\ref{cor:sixth_max_inter_d=3} hold for $k=6$ and $d=3$.

Assume that $d\geq4$. It follows from Theorem \ref{thm:maximal_intersecting_d+2} that $\mathcal F$ is isomorphic to one of $\mathcal H(k,d,2)$, $\mathcal H(k,d,k-d)$, $\mathcal H(k,d,k-d+1)$, $\mathcal H(k,d,k-d+2)$ and $\mathcal G(k,d)$. Since $n>n_1(k,d)$, by Lemma \ref{lem:size_H(k,d,l)},
$$|\mathcal H(k,d,k-d+2)|>|\mathcal H(k,d,k-d+1)|>|\mathcal H(k,d,k-d)|.$$
For $n>n_1(k,d)>2k-d+1$, by Lemma \ref{lem:size_G(k,d)}$(1)$, $|\mathcal H(k,d,k-d+1)|>|\mathcal G(k,d)|$, and hence $|\mathcal H(k,d,k-d+1)|>\max\{|\mathcal G(k,d)|,|\mathcal H(k,d,k-d)|\}$.
By Lemma \ref{lem:size_A(k,d)}$(1)$, due to $k-d+1\leq d+1$, $|\mathcal H(k,d,2)|>|\mathcal H(k,d,k-d+1)|$, and hence $|\mathcal H(k,d,k-d+1)|<\min\{|\mathcal H(k,d,2)|,|\mathcal H(k,d,k-d+2)|\}$.
Thus $\mathcal H(k,d,k-d+1)$ is the third largest maximal non-trivial $d$-wise intersecting family, and Corollary \ref{cor:third_max_inter} holds for $d+3\leq k\leq 2d$ and $d\geq 4$.

By Lemma \ref{lem:size_A(k,d)}$(1)$, $|\mathcal H(k,d,2)|>|\mathcal H(k,d,k-d+2)|$ for $k\leq2d-1$ and $|\mathcal H(k,d,2)|<|\mathcal H(k,d,k-d+2)|$ for $k=2d$.
Thus when $d+3\leq k\leq 2d-1$, $\mathcal H(k,d,2)$ and $\mathcal H(k,d,k-d+2)$ (resp. when $k=2d$, $\mathcal H(k,d,k-d+2)$ and $\mathcal H(k,d,2)$) are the largest and the second largest maximal non-trivial $d$-wise intersecting family, respectively. Then Corollary \ref{cor:first_second_max_inter} holds for $d+3\leq k\leq 2d$ and $d\geq 4$.

By Lemma \ref{lem:size_G(k,d)}$(2)$, $|\mathcal H(k,d,k-d)|>|\mathcal G(k,d)|$ for $k=d+3$ and $|\mathcal H(k,d,k-d)|<|\mathcal G(k,d)|$ for $k>d+3$.
Therefore, $\mathcal H(k,d,k-d)$ (resp. $\mathcal G(k,d))$ is the fourth largest maximal non-trivial $d$-wise intersecting family for $k=d+3$ (resp. $k>d+3$). Then Corollary \ref{cor:fourth_max_inter} holds for $d+3\leq k\leq 2d$ and $d\geq 4$.

\noindent\underline{{\bf Case 3 $k=2d+1$.}} If $d\geq4$, it follows from Theorem \ref{thm:maximal_intersecting_d+2} that $\mathcal F$ is isomorphic to one of $\mathcal H(2d+1,d,2)$, $\mathcal H(2d+1,d,d+1)$, $\mathcal H(2d+1,d,d+2)$, $\mathcal H(2d+1,d,d+3)$ and $\mathcal G(2d+1,d)$. For $n>n_1(2d+1,d)$, Lemma \ref{lem:size_H(k,d,l)} implies $|\mathcal H(2d+1,d,d+3)|>|\mathcal H(2d+1,d,d+2)|$, Lemma \ref{lem:size_A(k,d)}$(1)$ implies $|\mathcal H(2d+1,d,d+2)|>|\mathcal H(2d+1,d,2)|$, Lemma \ref{lem:size_A(k,d)}$(2)$ implies $|\mathcal H(2d+1,d,2)|>|\mathcal G(2d+1,d)|$, and Lemma \ref{lem:size_G(k,d)}$(2)$ implies $|\mathcal G(2d+1,d)|>|\mathcal H(2d+1,d,d+1)|$.
Therefore,
$|\mathcal H(2d+1,d,d+3)|>|\mathcal H(2d+1,d,d+2)|>|\mathcal H(2d+1,d,2)|>|\mathcal G(2d+1,d)|>|\mathcal H(2d+1,d,d+1)|,$
and hence Corollaries \ref{cor:first_second_max_inter}--\ref{cor:fourth_max_inter} hold for $k=2d+1$ and $d\geq 4$.

If $d=3$, it follows from Theorem \ref{thm:maximal_intersecting_d+2} that $\mathcal F$ is isomorphic to one of $\mathcal H(7,3,2)$, $\mathcal H(7,3,4)$, $\mathcal H(7,3,5)$, $\mathcal H(7,3,6)$, $\mathcal G(7,3)$, $\mathcal S(7,3)$, $\mathcal S_1(7,3)$, $\mathcal S_2(7,3)$ and $\mathcal S_3(7,3)$.
Using the same argument as in the above paragraph, we have
$|\mathcal H(7,3,6)|>|\mathcal H(7,3,5)|>|\mathcal H(7,3,2)|>|\mathcal G(7,3)|>|\mathcal H(7,3,4)|.$
Since $n>n_1(7,3)$, Lemma \ref{lem:size_S(k,3)}$(2)$ implies $|\mathcal H(7,3,4)|>|\mathcal S_1(7,3)|$ and Lemma \ref{lem:size_S(k,3)}$(1)$ implies $|\mathcal S_1(7,3)|>|\mathcal S(7,3)|>|\mathcal S_2(7,3)|=|\mathcal S_3(k,3)|$.
Therefore, Corollaries \ref{cor:first_second_max_inter}--\ref{cor:sixth_max_inter_d=3} hold for $k=2d+1$ and $d=3$.

\noindent\underline{{\bf Case 4 $k>2d+1$.}} If $d\geq4$, it follows from Theorem \ref{thm:maximal_intersecting_d+2} that $\mathcal F$ is isomorphic to one of $\mathcal H(k,d,k-d)$, $\mathcal H(k,d,k-d+1)$, $\mathcal H(k,d,k-d+2)$ and $\mathcal G(k,d)$.
Since $n>n_1(k,d)$, Lemma \ref{lem:size_H(k,d,l)} implies $|\mathcal H(k,d,k-d+2)|>|\mathcal H(k,d,k-d+1)|$ and Lemma \ref{lem:size_G(k,d)} implies $|\mathcal H(k,d,k-d+1)|>|\mathcal G(k,d)|>|\mathcal H(k,d,k-d)|$.
Therefore, Corollaries \ref{cor:first_second_max_inter}--\ref{cor:fourth_max_inter} hold for $k>2d+1$ and $d\geq 4$.

Assume that $d=3$. It follows from Theorem \ref{thm:maximal_intersecting_d+2} that $\mathcal F$ is isomorphic to one of $\mathcal H(k,3,k-3)$, $\mathcal H(k,3,k-2)$, $\mathcal H(k,3,k-1)$, $\mathcal G(k,3)$, $\mathcal S(k,3)$, $\mathcal S_1(k,3)$, $\mathcal S_2(k,3)$ and $\mathcal S_3(k,3)$.
Since $n>n_1(k,3)$, Lemma \ref{lem:size_H(k,d,l)} implies $|\mathcal H(k,3,k-1)|>|\mathcal H(k,3,k-2)|$, Lemma \ref{lem:size_G(k,d)}$(1)$ implies $|\mathcal H(k,3,k-2)|>|\mathcal G(k,3)|$, and Lemma \ref{lem:size_S(k,3)}$(1)$ implies $|\mathcal G(k,3)|>|\mathcal S_1(k,3)|>|\mathcal S(k,3)|$.
Thus
$|\mathcal H(k,3,k-1)|>|\mathcal H(k,3,k-2)|>|\mathcal G(k,3)|>|\mathcal S_1(k,3)|>|\mathcal S(k,3)|.$
If $k=8$, by Lemma \ref{lem:size_S(k,3)} $(3)$ and $(4)$,
$|\mathcal S(8,3)|>|\mathcal H(8,3,5)|>|\mathcal S_2(8,3)|=|\mathcal S_3(8,3)|,$
and so Corollaries \ref{cor:first_second_max_inter}--\ref{cor:sixth_max_inter_d=3} hold for $k=8$ and $d=3$. If $k\geq9$, by Lemma \ref{lem:size_S(k,3)} $(1)$ and $(4)$,
$|\mathcal S(k,3)|>|\mathcal S_2(k,3)|=|\mathcal S_3(k,3)|>|\mathcal H(k,3,k-3)|,$
and so Corollaries \ref{cor:first_second_max_inter}--\ref{cor:sixth_max_inter_d=3} hold for $k\geq 9$ and $d=3$.
\end{proof}

\section{Concluding remarks}

This paper investigates the structure of maximal non-trivial $d$-wise intersecting $k$-uniform families with large sizes for any $k>d\geq 3$. Theorem \ref{thm:maximal_intersecting_d+2} and Theorem \ref{thm:maximal_intersecting_d+1} examine the cases of $k\geq d+2$ and $k=d+1$, respectively.

It is required that $n>n_1(k,d)$ in Theorem \ref{thm:maximal_intersecting_d+2} because of using the Sunflower Lemma (see Lemma \ref{lem:Sunflower} and Lemma \ref{lem:size_B^d}). A natural question is how to reduce the value of $n_1(k,d)$. Balogh and Linz \cite{BL}, recently, gave a short proof to greatly improve the lower bound for $n$ in Corollary \ref{cor:first_second_max_inter}(1) by using the simple fact that every non-trivial $d$-wise intersecting family is $(d-1)$-intersecting. Since the largest non-trivial $(d-1)$-intersecting families have been investigated systematically by Ahlswede and Khachatrian in \cite{AK96,AK97,AK99} and they are exactly $d$-wise intersecting, as a simple corollary, Balogh and Linz showed that a non-trivial $d$-wise intersecting $k$-uniform family with the largest size is isomorphic to one of $\mathcal H(k,d,2)$ and $\mathcal H(k,d,k-d+2)$ for any $n>(1+\frac{d}{2})(k-d+2)$.

Using the same idea presented by Balogh and Linz and combining the recent work by Cao, Lv and Wang \cite{CLW} on determining the structure of maximal non-trivial $(d-1)$-intersecting families $\mathcal F\subseteq \binom{[n]}{k}$ with $|\mathcal F|\geq (k-d+1)\binom{n-d}{k-d}-\binom{k-d+1}{2}\binom{n-d-1}{k-d-1}\sim (k-d+1)\binom{n}{k-d}$, we can give an improvement of the lower bound for $n$ in Corollary \ref{cor:first_second_max_inter}(2) when $k\geq d+2$. Cao, Lv and Wang provided $k-d+3$ maximal non-trivial $(d-1)$-intersecting families, three of which are $d$-wise intersecting. As a corollary of their result, we can see that a maximal non-trivial $d$-wise intersecting $k$-uniform family with the second largest size is isomorphic to one of $\mathcal H(k,d,2)$, $\mathcal H(k,d,k-d+2)$ and $\mathcal H(k,d,k-d+1)$ for any $n>d-2+\max\{\binom{d+1}{2},\frac{k-d+3}{2}\}(k-d+2)^2$.

However, it seems complicated to obtain more maximal non-trivial $(d-1)$-intersecting families that are also ($d-1$)-wise intersecting by employing the method in \cite{CLW}. Therefore, a problem is how to improve the lower bound for $n$ in Corollaries \ref{cor:third_max_inter}--\ref{cor:sixth_max_inter_d=3}. Note that the order of magnitude of the lower bound for the size of $\mathcal F$ in Corollaries \ref{cor:third_max_inter}--\ref{cor:sixth_max_inter_d=3} is $(k-d)\binom{n}{k-d}$.

Theorem \ref{thm:maximal_intersecting_d+1} implies that for any $n>(d+1)^2$, there is no maximal non-trivial $d$-wise intersecting family $\mathcal F\subseteq\binom{[n]}{d+1}$ when $3d(d+1)<|\mathcal F|<\min\{|\mathcal H(d+1,d,2)|,|\mathcal H(d+1,d,3)|\}=\min\{(d+1)n-d^2-2d,3n-2d-4\}$. This result cannot be derived by Cao, Lv and Wang's work \cite{CLW}.

Finally we remark that O'Neill and Verstra\"{e}te \cite{OV} conjectured that Theorem \ref{thm:non_d_wise_intersecting} holds for $n>kd/(d-1)$. Balogh and Linz \cite{BL} presented counterexamples to this conjecture, and made a new conjecture \cite[Conjecture 3]{BL} for the structure of the largest non-trivial $d$-wise intersecting families for $n\leq(1+\frac{d}{2})(k-d+2)$.
Tokushige \cite{To22} presented counterexamples to the conjecture of Balogh and Linz, and made a new one \cite[Problem 2]{To22}. Subsequently, Balogh and Linz proposed a modified version of Tokushige's conjecture \cite[Question 4]{BL}.

\subsection*{Acknowledgements}
The authors thank  the anonymous referees for their valuable comments and suggestions that helped improve the equality of the paper.

\appendix
\setcounter{equation}{0}
\renewcommand\theequation{A.\arabic{equation}}

\section{Appendix: Lower bounds for the sizes of extremal families}\label{Sec:B}

This appendix gives lower bounds for the sizes of the families $\mathcal H(k,d,l)$, $\mathcal G(k,d)$, ${\cal S}(k,3)$, ${\cal S}_1(k,3)$, ${\cal S}_2(k,3)$ and ${\cal S}_3(k,3)$. We will see that their sizes are all greater than $(k-d-\frac{1}{2})\binom{n-d}{k-d}$ except for the case of $\mathcal H(k,d,2)$ with $k\geq 2d+2$, and hence satisfy the condition of Theorem \ref{thm:maximal_intersecting_d+2}. The following combinatorial identity will be often used:
\begin{equation}
\begin{aligned}\label{equ:comb_indetity}
\binom{n}{j}-\binom{n-i}{j}=\sum_{h=1}^{i}\binom{n-h}{j-1},
\end{aligned}
\end{equation}
which can be obtained by applying $\binom{a}{b}=\binom{a-1}{b}+\binom{a-1}{b-1}$ repeatedly.

\begin{Lemma}\label{lem:size_H(k,d,l)_and_bound}
Let $k\geq d+2\geq 5$ and $n\geq 2k(k-d)^3+d$. Then
\begin{enumerate}
\item[$(1)$] $|\mathcal H(k,d,2)|>(k-d-\frac{1}{2})\binom{n-d}{k-d}$ when $k<2d+2$, and $|\mathcal H(k,d,2)|<(k-d-\frac{1}{2})\binom{n-d}{k-d}$ when $k\geq 2d+2$;
\item[$(2)$] $|\mathcal H(k,d,l)|>(k-d-\frac{1}{2})\binom{n-d}{k-d}$ when $l\in[k-d,k-d+2]$.
\end{enumerate}
\end{Lemma}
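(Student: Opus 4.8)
The plan is to argue entirely from the closed form \eqref{equ:H(k,d,l)} together with the telescoping identity \eqref{equ:comb_indetity}. Throughout I would write $m:=k-d\geq 2$ and $N:=n-d$, so the target quantity is $(m-\tfrac12)\binom{N}{m}$ and the hypothesis reads $N\geq 2km^3$. The key observation is that both parts reduce to estimating a ratio of binomial coefficients, and the hypothesis on $n$ is engineered precisely to bound the resulting error terms.

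For part $(1)$ I would first reduce $|\mathcal H(k,d,2)|$ to a transparent closed form. Applying \eqref{equ:comb_indetity} to $\binom{N+1}{m+1}-\binom{N-1}{m+1}=\binom{N}{m}+\binom{N-1}{m}$ turns the formula into
$$|\mathcal H(k,d,2)|=\binom{N}{m}+d\binom{N-1}{m}=\binom{N}{m}\Big(1+d\,\tfrac{N-m}{N}\Big),$$
using $\binom{N-1}{m}=\tfrac{N-m}{N}\binom{N}{m}$. Subtracting the target and dividing by $\binom{N}{m}$, everything comes down to the sign of the scalar $\tfrac32+(d-m)-\tfrac{dm}{N}=\tfrac32+2d-k-\tfrac{dm}{N}$. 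Its leading part $\tfrac32+2d-k$ is $\geq\tfrac12$ when $k\leq 2d+1$ and $\leq-\tfrac12$ when $k\geq 2d+2$, while $N\geq 2km^3$ forces the error $\tfrac{dm}{N}\leq\tfrac{d}{2km^2}<\tfrac{1}{2m^2}\leq\tfrac18$. Hence the scalar is strictly positive in the first case and strictly negative in the second, giving exactly the claimed dichotomy.

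For part $(2)$ I would avoid the exact evaluation and instead use a single lower bound uniform in $l\in\{m,m+1,m+2\}$. Dropping the nonnegative term $(d-1)\binom{N+1-l}{m-l+2}$ and expanding the two leading binomials via \eqref{equ:comb_indetity} gives
$$|\mathcal H(k,d,l)|\geq \sum_{h=1}^{l}\binom{N+1-h}{m}\geq l\binom{N+1-l}{m}\geq m\binom{N+1-l}{m},$$
where the last step uses $l\geq k-d=m$. It then suffices to prove $\binom{N+1-l}{m}/\binom{N}{m}>1-\tfrac{1}{2m}$. Writing this ratio as $\prod_{j=0}^{m-1}\big(1-\tfrac{l-1}{N-j}\big)$ and applying $\prod(1-x_j)\geq 1-\sum x_j$, the claim reduces to $N-m+1>2m^2(l-1)$, which holds since $l-1\leq m+1$ and $N\geq 2km^3\geq 2(m+3)m^3$. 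Notably this covers all three values of $l$ at once, so no appeal to the monotonicity Lemma \ref{lem:size_H(k,d,l)} is required.

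The binomial manipulations are routine, so I expect the only place demanding care to be matching the numeric threshold in the hypothesis to the two error estimates, namely verifying $\tfrac{dm}{N}<\tfrac12$ in part $(1)$ and $N-m+1>2m^2(l-1)$ in part $(2)$. Both are comfortably implied by $n\geq 2k(k-d)^3+d$, so the main obstacle is bookkeeping the constants rather than any genuine difficulty.
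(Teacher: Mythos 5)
Your proposal is correct. Part $(1)$ is essentially identical to the paper's argument: both reduce $|\mathcal H(k,d,2)|$ via \eqref{equ:comb_indetity} to $\binom{n-d}{k-d}+d\binom{n-d-1}{k-d}$ and compare the resulting scalar $\frac32+2d-k-\frac{d(k-d)}{n-d}$ (the paper phrases this as the sign of $(2d+\frac32-k)\binom{n-d}{k-d}-d\binom{n-d-1}{k-d-1}$, which is the same computation), with the hypothesis on $n$ absorbing the error term. Part $(2)$, however, takes a genuinely different and arguably cleaner route. The paper proves only the case $l=k-d$ directly, by telescoping $\binom{n-d+1}{k-d+1}-\binom{n-k+1}{k-d+1}=\sum_{i=d}^{k-1}\binom{n-i}{k-d}$ and bounding the deficit by $\frac{(k-d)(k-d-1)}{2}\binom{n-d-1}{k-d-1}$; it then invokes the monotonicity Lemma \ref{lem:size_H(k,d,l)} to cover $l=k-d+1,k-d+2$ when $k-d\geq3$, and needs a separate ad hoc computation for $k-d=2$. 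You instead discard the nonnegative tail $(d-1)\binom{n-d-l+1}{k-d-l+2}$, bound the telescoping sum below by $l$ times its smallest term to get $|\mathcal H(k,d,l)|\geq (k-d)\binom{n-d-l+1}{k-d}$ uniformly in $l\in\{k-d,k-d+1,k-d+2\}$, and finish with the Weierstrass inequality $\prod(1-x_j)\geq 1-\sum x_j$ applied to the ratio $\binom{N+1-l}{m}/\binom{N}{m}$; the threshold $n\geq 2k(k-d)^3+d$ comfortably covers the required $N-m+1>2m^2(l-1)$. This buys you a single argument with no case split on $k-d$ and no dependence on Lemma \ref{lem:size_H(k,d,l)}, at the modest cost of being a lower-bound-only argument (which is all that is needed here). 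All the individual steps check out: the dropped term is nonnegative for each of the three values of $l$, the inequality $l\geq k-d$ justifies replacing $l$ by $m$, and the constants verify as you claim.
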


\begin{proof}
(1) Applying (\ref{equ:comb_indetity}), we have
\begin{equation}
\begin{aligned}
|\mathcal H(k,d,2)|-(k-d-\frac{1}{2})\binom{n-d}{k-d}&=(d+\frac{3}{2}-k)\binom{n-d}{k-d}+d\binom{n-d-1}{k-d}\\
&=(2d+\frac{3}{2}-k)\binom{n-d}{k-d}-d\binom{n-d-1}{k-d-1}.\notag
\end{aligned}
\end{equation}
When $k\geq 2d+2$, $|\mathcal H(k,d,2)|<(k-d-\frac{1}{2})\binom{n-d}{k-d}$. When $k<2d+2$, since $n-d\geq 2k(k-d)^3$, $$\frac{(2d+\frac{3}{2}-k)\binom{n-d}{k-d}}{d\binom{n-d-1}{k-d-1}}=\frac{(2d+\frac{3}{2}-k)(n-d)}{d(k-d)}>1,$$ and hence $|\mathcal H(k,d,2)|>(k-d-\frac{1}{2})\binom{n-d}{k-d}$.

(2) When $l=k-d$, applying (\ref{equ:comb_indetity}), we have
\begin{align*}
& |\mathcal H(k,d,k-d)|-(k-d-\frac{1}{2})\binom{n-d}{k-d} \\
= &\ \sum_{i=d}^{k-1}\binom{n-i}{k-d}+(d-1)\binom{n-k+1}{2}-(k-d-\frac{1}{2})\binom{n-d}{k-d} \\
= &\ \frac{1}{2}\binom{n-d}{k-d}+(d-1)\binom{n-k+1}{2}-\sum_{i=d+1}^{k-1}
\sum_{j=d+1}^{i}\binom{n-j}{k-d-1} \\
>&\ \frac{1}{2}\binom{n-d}{k-d}-\sum_{i=d+1}^{k-1}\sum_{j=d+1}^{i}\binom{n-j}{k-d-1} \\
>&\ \frac{1}{2}\binom{n-d}{k-d}-\frac{(k-d)(k-d-1)}{2}\binom{n-d-1}{k-d-1}.\notag
\end{align*}
Since $n\geq 2k(k-d)^3+d$, $|\mathcal H(k,d,k-d)|>(k-d-\frac{1}{2})\binom{n-d}{k-d}$.

If $k-d\geq 3$, then by Lemma \ref{lem:size_H(k,d,l)}, $|\mathcal H(k,d,k-d+2)|>|\mathcal H(k,d,k-d+1)|>|\mathcal H(k,d,k-d)|>(k-d-\frac{1}{2})\binom{n-d}{k-d}$. Thus it remains to examine the case of $k-d=2$ and $l\in\{k-d+1,k-d+2\}=\{3,4\}$. If $k-d=2$ and $l=3$, since
\begin{equation}
\begin{aligned}
|\mathcal H(d+2,d,3)|-\frac{3}{2}\binom{n-d}{2}&=\sum_{i=d}^{d+2}\binom{n-i}{2}+(d-1)(n-k)-\frac{3}{2}\binom{n-d}{2}\\
&=\frac{(n-d-1)(n-d-4)}{4}+\binom{n-d-2}{2}+(d-1)(n-k)>0,\notag
\end{aligned}
\end{equation}
we have $|\mathcal H(k,d,3)|>(k-d-\frac{1}{2})\binom{n-d}{k-d}$. If $k-d=2$ and $l=4$, then apply Lemma \ref{lem:size_H(k,d,l)} to obtain $|\mathcal H(k,d,4)|>|\mathcal H(k,d,3)|>(k-d-\frac{1}{2})\binom{n-d}{k-d}$.
\end{proof}

\begin{Lemma}\label{lem:size_G(k,d)_and_bound}
Let $k\geq d+1\geq4$ and $n\geq k(k-d)^2+d$. Then $|\mathcal G(k,d)|>(k-d-\frac{1}{2})\binom{n-d}{k-d}$.
\end{Lemma}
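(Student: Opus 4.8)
The plan is to follow the template already established in the proof of Lemma~\ref{lem:size_H(k,d,l)_and_bound}(2): expand $|\mathcal{G}(k,d)|$ via the combinatorial identity \eqref{equ:comb_indetity}, isolate one positive leading term, and bound the remaining negative contribution by a product of two binomial coefficients whose ratio to the leading term is controlled by the hypothesis $n\geq k(k-d)^2+d$.

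First I would rewrite the difference $\binom{n-d+1}{k-d+1}-\binom{n-k+1}{k-d+1}$ appearing in \eqref{equ:G(k,d)}. Applying \eqref{equ:comb_indetity} with upper index $n-d+1$ and lower index $k-d+1$, this telescopes to $\sum_{i=d}^{k-1}\binom{n-i}{k-d}$, a sum of exactly $k-d$ binomial coefficients whose largest term is $\binom{n-d}{k-d}$. Hence
$$|\mathcal{G}(k,d)| = \sum_{i=d}^{k-1}\binom{n-i}{k-d} + \binom{n-k-1}{k-d-1} + 2(n-k) + d - 3.$$
Next I would subtract $(k-d-\tfrac12)\binom{n-d}{k-d} = (k-d)\binom{n-d}{k-d} - \tfrac12\binom{n-d}{k-d}$ and rewrite $\sum_{i=d}^{k-1}\binom{n-i}{k-d} - (k-d)\binom{n-d}{k-d}$ by applying \eqref{equ:comb_indetity} once more to each difference $\binom{n-d}{k-d}-\binom{n-i}{k-d}$, obtaining the double sum $-\sum_{i=d+1}^{k-1}\sum_{j=d+1}^{i}\binom{n-j}{k-d-1}$. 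This yields
$$|\mathcal{G}(k,d)| - (k-d-\tfrac12)\binom{n-d}{k-d} = \tfrac12\binom{n-d}{k-d} - \sum_{i=d+1}^{k-1}\sum_{j=d+1}^{i}\binom{n-j}{k-d-1} + \binom{n-k-1}{k-d-1} + 2(n-k) + d - 3.$$

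Since $n>k$ and $d\geq 3$, the last three terms are nonnegative, so it suffices to show that $\tfrac12\binom{n-d}{k-d}$ dominates the double sum. The double sum has $\binom{k-d}{2}$ terms, each at most $\binom{n-d-1}{k-d-1}$ (the value at $j=d+1$), so it is bounded above by $\binom{k-d}{2}\binom{n-d-1}{k-d-1}$. Using $\binom{n-d}{k-d} = \tfrac{n-d}{k-d}\binom{n-d-1}{k-d-1}$, the required inequality $\tfrac12\binom{n-d}{k-d} > \binom{k-d}{2}\binom{n-d-1}{k-d-1}$ reduces to $n-d > (k-d)^2(k-d-1)$, which follows from $n-d \geq k(k-d)^2 > (k-d)^2(k-d-1)$. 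The lone degenerate case is $k=d+1$, where the double sum is empty and the difference is manifestly positive.

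I do not expect a genuine analytic obstacle here; the estimates are slack by a factor of roughly $k/(k-d-1)$. The main (and only minor) difficulty will be bookkeeping: getting the index ranges correct in the two applications of \eqref{equ:comb_indetity}, confirming that the double sum has precisely $\binom{k-d}{2}=\sum_{m=1}^{k-d-1}m$ terms, and checking that the discarded tail $\binom{n-k-1}{k-d-1}+2(n-k)+d-3$ is indeed nonnegative across all admissible $k\geq d+1$ and $d\geq 3$.
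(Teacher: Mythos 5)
Your proposal is correct and follows essentially the same route as the paper's proof: both expand $\binom{n-d+1}{k-d+1}-\binom{n-k+1}{k-d+1}$ as $\sum_{i=d}^{k-1}\binom{n-i}{k-d}$ via \eqref{equ:comb_indetity}, discard the nonnegative tail, convert the deficit into the double sum $\sum_{i=d+1}^{k-1}\sum_{j=d+1}^{i}\binom{n-j}{k-d-1}$, and bound it by $\frac{1}{2}(k-d)(k-d-1)\binom{n-d-1}{k-d-1}$, which $\frac{1}{2}\binom{n-d}{k-d}$ dominates under $n\geq k(k-d)^2+d$. The only cosmetic difference is that you spell out the reduction to $n-d>(k-d)^2(k-d-1)$ and the empty-sum case $k=d+1$ explicitly.
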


\begin{proof}
By (\ref{equ:comb_indetity}), $\binom{n-d+1}{k-d+1}-\binom{n-k+1}{k-d+1}=\sum_{i=d}^{k-1}\binom{n-i}{k-d}$, so $|\mathcal G(k,d)|>\sum_{i=d}^{k-1}\binom{n-i}{k-d}$. Since $n\geq k(k-d)^2+d$,
\begin{align*}
\sum_{i=d}^{k-1}\binom{n-i}{k-d}-(k-d-\frac{1}{2})\binom{n-d}{k-d}&
=\sum_{i=d}^{k-1}\left(\binom{n-i}{k-d}-\binom{n-d}{k-d}\right)+\frac{1}{2}\binom{n-d}{k-d}\\
&=\frac{1}{2}\binom{n-d}{k-d}-\sum_{i=d+1}^{k-1}\sum_{j=d+1}^{i}\binom{n-j}{k-d-1}\\
&\geq \frac{1}{2}\binom{n-d}{k-d}-\frac{1}{2}(k-d)(k-d-1)\binom{n-d-1}{k-d-1}>0.\notag
\end{align*}
Therefore, $|\mathcal G(k,d)|>(k-d-\frac{1}{2})\binom{n-d}{k-d}$.
\end{proof}

\begin{Lemma}\label{lem:size_S(k,3)_and_bound}
Let ${\cal S}\in\{{\cal S}(k,3),{\cal S}_1(k,3)\}$ if $k\geq 5$, and ${\cal S}\in\{{\cal S}_2(k,3),{\cal S}_3(k,3)\}$ if $k\geq 6$. Then $|{\cal S}|>(k-\frac{7}{2})\binom{n-3}{k-3}$ for any $n\geq 2k(k-3)^3+3$.
\end{Lemma}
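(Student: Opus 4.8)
The plan is to reduce all four families to one common lower bound and then invoke the estimate already established for $\mathcal{G}(k,d)$. First I would observe, from the explicit formulas \eqref{equ:S(k,3)}, \eqref{equ:S1(k,3)} and \eqref{equ:S2(k,3)_S3(k,3)}, that every family $\mathcal{S}$ in the statement satisfies
$$|\mathcal{S}| \geq \binom{n-2}{k-2} - \binom{n-k+1}{k-2},$$
because the remaining summands (the binomials $\binom{n-k-2}{k-5}$ or $\binom{n-k-3}{k-6}$, together with the positive quantities $6$ or $n-k+3$) are all nonnegative; in fact the positive constant alone makes the inequality strict. This step is uniform across the four cases, so I only need to bound the common main term $\binom{n-2}{k-2} - \binom{n-k+1}{k-2}$ from below, and the distinction between the ranges $k\geq 5$ and $k\geq 6$ plays no role beyond guaranteeing that the families are defined.

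Next I would rewrite this main term using identity \eqref{equ:comb_indetity}. Applying it with $n \mapsto n-2$, $j \mapsto k-2$, $i \mapsto k-3$, and noting $n-k+1 = (n-2)-(k-3)$, gives
$$\binom{n-2}{k-2} - \binom{n-k+1}{k-2} = \sum_{h=1}^{k-3}\binom{n-2-h}{k-3} = \sum_{i=3}^{k-1}\binom{n-i}{k-3}.$$
This is precisely the sum $\sum_{i=d}^{k-1}\binom{n-i}{k-d}$ that appears in the proof of Lemma \ref{lem:size_G(k,d)_and_bound}, specialized to $d=3$.

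Then I would reuse that computation directly. There it is shown, via a second application of \eqref{equ:comb_indetity}, that
$$\sum_{i=d}^{k-1}\binom{n-i}{k-d} - \left(k-d-\tfrac12\right)\binom{n-d}{k-d} = \tfrac12\binom{n-d}{k-d} - \sum_{i=d+1}^{k-1}\sum_{j=d+1}^{i}\binom{n-j}{k-d-1},$$
and that the right-hand side is positive once $n \geq k(k-d)^2 + d$. With $d=3$ this reads $\sum_{i=3}^{k-1}\binom{n-i}{k-3} > (k-\tfrac72)\binom{n-3}{k-3}$ whenever $n \geq k(k-3)^2 + 3$. Since the hypothesis $n \geq 2k(k-3)^3 + 3$ implies $n \geq k(k-3)^2+3$ (because $2(k-3)^3 \geq (k-3)^2$ for $k\geq 5$), the bound applies, and combining it with the first step yields $|\mathcal{S}| > (k-\tfrac72)\binom{n-3}{k-3}$.

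The only mildly delicate point—the main obstacle, such as it is—lies in the first step: verifying uniformly that the discarded terms are genuinely nonnegative for every relevant $k$ and $n$ across all four families (for instance that $\binom{n-k-3}{k-6}$ is a well-formed nonnegative binomial when $k=6$, and that the threshold on $n$ already covers the smallest families $\mathcal{S}_2,\mathcal{S}_3$). Once this bookkeeping is settled, the remainder is a direct appeal to the $\mathcal{G}(k,d)$ estimate with $d=3$, and no genuinely new computation is needed.
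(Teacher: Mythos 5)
Your proposal is correct and follows essentially the same route as the paper: drop the nonnegative extra terms to reduce to the common lower bound $\binom{n-2}{k-2}-\binom{n-k+1}{k-2}$, expand it via \eqref{equ:comb_indetity} into $\sum_{i=3}^{k-1}\binom{n-i}{k-3}$, and compare with $(k-\frac72)\binom{n-3}{k-3}$ by the same double-sum estimate used for $\mathcal G(k,d)$. The only cosmetic difference is that the paper redoes that final computation inline (under its own hypothesis $n\geq 2k(k-3)^3+3$) rather than citing Lemma \ref{lem:size_G(k,d)_and_bound}, and your check that the stronger hypothesis implies $n\geq k(k-3)^2+3$ makes that citation legitimate.
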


\begin{proof} By \eqref{equ:comb_indetity},
$\binom{n-2}{k-2}-\binom{n-k+1}{k-2}=\sum_{i=3}^{k-1}\binom{n-i}{k-3}$, so $|\mathcal S|>\sum_{i=3}^{k-1}\binom{n-i}{k-3}$. Since $n\geq2k(k-3)^3+3$,
\begin{equation}
\begin{aligned}
\sum_{i=3}^{k-1}\binom{n-i}{k-3}-(k-\frac{7}{2})\binom{n-3}{k-3}&
=\sum_{i=3}^{k-1}\left(\binom{n-i}{k-3}-\binom{n-3}{k-3}\right)+\frac{1}{2}\binom{n-3}{k-3} \\
&=\frac{1}{2}\binom{n-3}{k-3}-\sum_{i=4}^{k-1}\sum_{j=4}^{i}\binom{n-j}{k-4}\\
&>\frac{1}{2}\binom{n-3}{k-3}-
\frac{(k-3)(k-4)}{2}\binom{n-4}{k-4}>0.\notag
\end{aligned}
\end{equation}
Therefore, $|{\cal S}|>(k-\frac{7}{2})\binom{n-3}{k-3}$.
\end{proof}

\end{document}